\font\bbbld=msbm10 scaled\magstephalf
\newcommand{\bi}{\bar{i}}
\newcommand{\bj}{\bar{j}}
\newcommand{\bk}{\bar{k}}
\newcommand{\bl}{\bar{l}}
\newcommand{\bm}{\bar{m}}
\newcommand{\bn}{\bar{n}}
\newcommand{\bp}{\bar{p}}
\newcommand{\bq}{\bar{q}}
\newcommand{\br}{\bar{r}}
\newcommand{\bs}{\bar{s}}
\newcommand{\bw}{\bar{w}}
\newcommand{\bz}{\bar{z}}
\newcommand{\bM}{\bar{M}}
\newcommand{\balpha}{\bar{\alpha}}
\newcommand{\bpartial}{\bar{\partial}}
\def \p{\partial}
\def \f{\frac}
\def \o{\omega}
\def \d{\delta}
\def \tg{\tilde{g}}
\newcommand{\fg}{\mathfrak{g}}
\newcommand{\fIm}{\mathfrak{Im}}
\newcommand{\fRe}{\mathfrak{Re}}
\newcommand{\bfC}{\hbox{\bbbld C}}
\newcommand{\bfQ}{\hbox{\bbbld Q}}
\newcommand{\bfR}{\hbox{\bbbld R}}
\newcommand{\bfS}{\hbox{\bbbld S}}
\newcommand{\cA}{\mathcal{A}}
\newcommand{\cH}{\mathcal{H}}
\newcommand{\cT}{\mathcal{T}}
\newcommand{\tr}{\mbox{tr}}
\newcommand{\ol}{\overline}
\newcommand{\ul}{\underline}
\newtheorem{theorem}{Theorem}[section]
\newtheorem{lemma}[theorem]{Lemma}
\newtheorem{proposition}[theorem]{Proposition}
\newtheorem{conjecture}[theorem]{Conjecture}
 \theoremstyle{definition}
\theoremstyle{remark}
\newtheorem{remark}[theorem]{Remark}
\numberwithin{equation}{section}
\begin{document}
\setlength{\baselineskip}{1.2\baselineskip}

\title[Complex Monge-Amp\`ere Equations]
{Complex Monge-Amp\`ere Equations and \\ Totally Real Submanifolds}
%{Totally Real Submanifolds and Complex Monge-Amp\`ere Equations}
%on Hermitian Manifolds}
\author{Bo Guan}
\address{Department of Mathematics, Ohio State University,
         Columbus, OH 43210}
\email{guan@math.osu.edu}
\thanks{Research of the first author was supported in part by NSF grants.}
\author{Qun Li}
\address{Department of Mathematics, Ohio State University,
         Columbus, OH 43210. 
{\em Current address}: 
Department of Mathematics, Wright State University,
         Dayton, OH 45435}
\email{qun.li@wright.edu}

\begin{abstract}
We study the Dirichlet problem for complex Monge-Amp\`ere equations in 
Hermitian manifolds with general (non-pseudoconvex) boundary.
Our main result (Theorem~\ref{gblq-th20})
 extends the classical theorem of 
Caffarelli, Kohn, Nirenberg and Spruck~\cite{CKNS} in $\bfC^n$. 
We also consider the equation on compact manifolds without boundary,
attempting to generalize Yau's theorems~\cite{Yau78} in the K\"ahler case. 
As applications of the main result we study some connections between the
homogeneous complex Monge-Amp\`ere ({\em HCMA}) equation and totally real 
submanifolds, and a special Dirichlet problem for the HCMA equation
 related to Donaldson's conjecture~\cite{Donaldson99} on geodesics in
the space of K\"ahler metrics.

{\bf Mathematical Subject Classification (2000). } 58J05, 58J32, 32W20, 35J25,
 53C55.
\end{abstract}

\maketitle

\bigskip

\section{Introduction}
\label{gblq-I}
\setcounter{equation}{0}
\medskip

There are two primary purposes in this paper which are closely related.
One is to study the Dirichlet problem for complex Monge-Amp\`ere
type equations in Hermitian manifolds. The other is to characterize
totally real submanifolds by solutions of the homogeneous Monge-Amp\`ere
equation using results from the first part. The latter is also one of the
original motivations to our study of Monge-Amp\`ere type equations on
general Hermitian manifolds.

Let $(M^n, \omega)$ be a compact Hermitian manifold of (complex) dimension
$n \geq 2$ with smooth boundary $\partial M$,  and $\bM = M \cup \partial M$.
Let $\chi$ be a smooth $(1,1)$ form on $M$ and
$\psi \in C^{\infty} (M \times \bfR)$. % $\psi > 0$.
We consider the Dirichlet
problem for the complex Monge-Amp\`ere equation
\begin{equation}
\label{gblq-I10}
   \Big(\chi + \frac{\sqrt{-1}}{2} \partial \bpartial u\Big)^n
  = \psi (z, u) \omega^n \;\; \mbox{in $\bM$}.
\end{equation}
Given $\varphi \in C^{\infty} (\partial M)$, we seek solutions
of equation~\eqref{gblq-I10}
satisfying the boundary condition
\begin{equation}
\label{gblq-I20}
  u = \varphi \;\; \mbox{on $\partial M$}.
 \end{equation}

We require
\begin{equation}
\label{gblq-I30}
\chi_u :=\chi + \frac{\sqrt{-1}}{2} \partial \bpartial u > 0
 \end{equation}
so that equation~\eqref{gblq-I10} is elliptic; we call such functions
{\em admissible}. Accordingly we shall assume $\psi > 0$.
Equation~\eqref{gblq-I10} becomes degenerate for $\psi \geq 0$; when
$\psi \equiv 0$ it is usually referred as the
homogeneous complex Monge-Amp\`ere ({\em HCMA}) equation.
Set
\begin{equation}
\label{gblq-I40}
 \cH_{\chi} = \{\phi \in C^2 (\bM): \chi_{\phi} > 0\}, \;\;
 \bar{\cH}_{\chi} = \{\phi \in C^0 (\bM): \chi_{\phi} \geq 0\}.
\end{equation}

Two canonical cases that are very important in
complex geometry and analysis correspond to $\chi = \omega$
and $\chi = 0$.  For $u \in \cH_{\omega}$, as in the K\"ahler case,
$\omega_u$ is a Hermitian form on $M$ and equation~\eqref{gblq-I10}
describes one of its Ricci forms.
We call $\cH_{\omega}$ the {\em space of Hermitian metrics}.
%which we shall come back to discuss in the last section of this article.
For $\chi = 0$, functions in $\cH_{\chi}$ are strictly plurisubharmonic,
while those in $\bar{\cH}_{\chi}$ plurisubharmonic.

The classical solvability of the Dirichlet problem was established by
Caffarelli, Kohn, Nirenberg and Spruck~\cite{CKNS} for strongly pseudoconvex
domains in $\bfC^n$.
Their results were extended to strongly pseudoconvex Hermitian manifolds
by Cherrier and Hanani~\cite{CH98}, \cite{CH99}
(for $\chi =0, \; \omega, \; - u\omega$ in \eqref{gblq-I10}),
and to general domains in
$\bfC^n$ by the first author~\cite{Guan98b} under the assumption of
existence of a subsolution. This latter extension and its techniques
have found useful applications in some important work;
see, e.g., P.-F. Guan's proof~\cite{GuanPF02}, \cite{GuanPF08}
of Chern-Levine-Nirenberg conjecture~\cite{CLN69} and the papers
of Chen~\cite{Chen00}, Blocki~\cite{Blocki}, and Phong and Sturm~\cite{PS}
 on the Donaldson conjectures~\cite{Donaldson99}.
Our first purpose in this paper is to treat the Dirichlet problem in
general (non-pseudoconvex) Hermitian manifolds.
%using techniques developed in \cite{GS93}, \cite{Guan98a}, \cite{Guan98b}.

\begin{theorem}
\label{gblq-th20}
Suppose that $\psi > 0$ and that there exists a
subsolution $\ul{u} \in \cH_{\chi} \cap C^4 (\bM)$
of \eqref{gblq-I10}-\eqref{gblq-I20}:
\begin{equation}
\label{gblq-I10'}
\left\{ \begin{aligned}
  & (\chi_{\ul{u}})^n \geq  \psi (z, \ul{u}) \omega^n \;\; \mbox{in $\bM$}
\\
& \ul{u} = \varphi  \;\; \mbox{on $\partial M$}
\end{aligned} \right.
 \end{equation}
%Assume further that $\ul{u} \in C^2$ in a neighborhood of $\partial M$
%(including $\partial M$).
The Dirichlet problem~\eqref{gblq-I10}-\eqref{gblq-I20} then admits a solution
$u \in \cH_{\chi} \cap C^{\infty} (\bM)$. % with $u \geq \ul{u}$ in $\bM$.
\end{theorem}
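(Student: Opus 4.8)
The plan is to solve the Dirichlet problem by the continuity method, for which the essential analytic content is a complete set of a priori estimates: once we have $C^0$, $C^1$, and $C^2$ bounds for admissible solutions (with constants depending only on the data, $\ul u$, and the geometry of $(\bM,\o)$), ellipticity plus Evans--Krylov theory upgrades these to $C^{2,\alpha}$, and the standard Schauder/bootstrap argument gives $C^\infty(\bM)$; openness along the continuity path follows from the implicit function theorem in Hölder spaces since the linearized operator at an admissible solution is uniformly elliptic with oblique (Dirichlet) boundary condition. So I would set $\psi_t = t\psi + (1-t)\,(\chi_{\ul u})^n/\o^n$ (or an analogous deformation) connecting the subsolution's equation at $t=0$ to the target at $t=1$, note that $\ul u$ itself solves (or subsolves) the $t=0$ problem, and reduce everything to closed a priori estimates uniform in $t$.

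For the $C^0$ bound, the subsolution $\ul u$ gives the lower barrier directly (by the comparison principle for the Monge--Ampère operator, using $\psi_u>0$ and monotonicity in $u$), while an upper bound comes from comparing with a solution of the linearized or a model equation, or from the maximum principle applied to $\chi_u>0$ together with an upper barrier built from $\ul u$ plus a large multiple of a strictly plurisubharmonic exhaustion-type function adapted to $\bM$. For the gradient estimate, I would bound $|\nabla u|$ on $\p M$ first, using $\ul u$ as a lower barrier and a carefully constructed upper barrier near the boundary (this is where the Hermitian torsion terms and the non-pseudoconvexity of $\p M$ enter, and where the subsolution hypothesis does the real work), and then bound $\sup_M|\nabla u|$ by a maximum-principle argument on a test function of the form $|\nabla u|^2 e^{\phi(u)}$, absorbing the torsion of $\o$ into the constants. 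The second-order estimate splits the same way: the interior bound on $|\p\bpartial u|$ reduces, via the concavity of $(\det)^{1/n}$ and a Bochner-type computation, to controlling the boundary values of $\p\bpartial u$; the mixed tangential-normal and double-normal second derivatives on $\p M$ are estimated using the subsolution and the boundary condition $u=\v$, with the double-normal derivative controlled by an algebraic argument from the equation once the other components are known.

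The main obstacle, and the novelty over \cite{CKNS} and \cite{Guan98b}, is the boundary second-order estimate in the presence of both a general (non-pseudoconvex) boundary and the torsion of the Hermitian metric: the extra first-order terms coming from $d\o\neq 0$ must be dominated, and the barrier constructions that worked for pseudoconvex $\p M$ or for $\bfC^n$ must be redesigned so that the subsolution $\ul u\in\cH_\chi$ supplies enough strict positivity of $\chi_{\ul u}$ near $\p M$ to beat these terms. I expect the construction to use a barrier of the shape $\ul u + t d - N d^2 + $ (tangential correction), where $d$ is the distance to $\p M$, with the parameters $t,N$ tuned against both the curvature of $\p M$ and $\|d\o\|$, combined with the operator $\cL$ (the linearization) applied to suitable test functions involving $\chi_{\ul u}$. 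Once these boundary estimates are in hand the rest is routine: collect the uniform $C^2$ bound, invoke Evans--Krylov and Schauder, close the continuity method, and conclude $u\in\cH_\chi\cap C^\infty(\bM)$.
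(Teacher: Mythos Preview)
Your proposal is correct and follows essentially the same architecture as the paper: continuity method reduced to a priori estimates, with $C^0$ from $\ul u \leq u \leq h$ (where $\Delta h + \tr\chi = 0$), interior gradient and Laplacian via maximum principle on $e^{\phi(u)}|\nabla u|^2$ and $e^{\phi(u)}(a+\Delta u)$, and boundary $C^2$ via the barrier $v=(u-\ul u)+t\sigma-N\sigma^2$ (your $\ul u + td - Nd^2$ should really be $u-\ul u+\ldots$). Two technical points the paper handles that you glossed over: (i) the interior estimates in the Hermitian case rely on special local coordinates (the paper's Lemma~\ref{non-sym}) in which $g_{i\bar\imath,j}=0$ at a point, which is how the torsion is actually absorbed; (ii) the double-normal estimate is \emph{not} purely algebraic---one first needs a uniform positive lower bound on the tangential complex Hessian $\det(u_{\alpha\bar\beta}+\chi_{\alpha\bar\beta})|_{T^{1,0}\partial M}$, and obtaining that lower bound (Proposition~\ref{prop-cma-10}) requires a second application of the barrier lemma to a carefully designed test function built from a unit section of $T^{1,0}\partial M$.
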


When $\chi > 0$, which is not assumed in Theorem~\ref{gblq-th20}, the
conditions on $\ul{u}$ can be weakened: it is enough to assume
$\ul{u} \in \bar{\cH}_{\chi}$; $\ul{u} \in C^2$ and $\chi_{\ul{u}} > 0$
in a neighborhood of $\partial M$, and satisfies \eqref{gblq-I10'} in the
viscosity sense. This shall be convenient in applications.

The Monge-Amp\`ere equation is one of the most important partial
differential equations in complex geometry and analysis.
In the framework of K\"ahler geometry,
it goes back at least to the Calabi conjecture~\cite{Calabi56}
which asserts that any element in the first Chern class of a compact K\"ahler
manifold is the Ricci form of a K\"ahler metric cohomologous to the
underlying metric.
In~\cite{Yau78}, Yau proved fundamental existence theorems
for complex Monge-Amp\`ere equations on compact K\"ahler
manifolds (without boundary) and consequently solved the Calabi
conjecture.
 Yau's work
also shows the existence of K\"ahler-Einstein metrics on K\"ahler
manifolds with nonpositive first Chern class ($c_1 (M) \leq 0$),
proving another Calabi conjecture which was solved
 by Aubin~\cite{Aubin78} independently for $c_1 (M) < 0$.
In a series of work (e.g. \cite{Tian87}, \cite{Tian90}, \cite{Tian97}),  
Tian made important contributions to the
Calabi conjecture when $c_1 (M) > 0$; see also \cite{Aubin98} and 
\cite{Tian00} for more references. 
More recently,  Donaldson~\cite{Donaldson99} made several conjectures
concerning geodesics in the space of K\"ahler metrics which reduce to
questions on special Dirichlet problems for the homogeneous complex
Monge-Amp\`ere ({\em HCMA}) equation; see also
Mabuchi~\cite{Mabuchi87} and Semmes~\cite{Semmes92}. There has been
interesting work in this direction, e.g. by Chen~\cite{Chen00},
Chen and Tian~\cite{CT08}, Phong and Sturm~\cite{PS06},
\cite{PS07}, \cite{PS}, Blocki~\cite{Blocki},
and Berman and Demailly~\cite{BD}.

The HCMA equation
($\psi \equiv 0$ in \eqref{gblq-I10}), which is well defined on general
complex manifolds, also arises in many other interesting geometric problems;
see e.g. \cite{CLN69}, \cite{BT79},
\cite{Stoll80}, {Lempert81}, \cite{Burns82}, \cite{Wong82},
\cite{GuanPF02}, \cite{GuanPF08}.
Because the HCMA equation is degenerate, the optimal regularity
of its solution in general is only $C^{1,1}$; see \cite{BF79},
\cite{GS80}. On the other hand, methods
from complex analysis so far seem to have only been able to produce
smooth or analytic solutions under special circumstances; in order to
treat the equation using techniques of elliptic PDE theory one needs
to introduce a metric on the manifold. In the full generality it seems
most natural to consider Hermitian metrics as every complex manifold
admits such a metric. Moreover, in many problems one needs to consider
manifolds with non-pseudoconvex boundary.
These are some of the major motivations to our study
of the Dirichlet problem on general Hermitian manifolds.
As an application of Theorem~\ref{gblq-th20} we consider some connections
between the HCMA equation and totally real submanifolds.

A submanifold $X$ of a complex manifold $M$ is {\em totally real} if
for any $p \in X$ the tangent space $T_pX$ does not contain any complex
line in $T_p M$,  i.e. $J (T_p X) \cap T_p X=\{0\}$.
In particular, $\mbox{dim} X \leq \frac{1}{2} \mbox{dim}_R M$.
The simplest example is the pair $\bfR^n \subset \bfC^n$, and it is
straightforward to verify that the function $u(z)=|\fIm z|$ satisfies the
%homogeneous Monge-Amp\`ere
HCMA equation $(\partial \bpartial u)^n = 0$ in
$\bfC^n \setminus \bfR^n$.
Another example is the affine hyperquadric
\begin{equation}
\label{Q_n}
\bfQ^n=\{z_1^2+z_2^2+\cdots +z_{n+1}^2=1\}
\end{equation}
which was studied by Patrizio and Wong~\cite{Patrizio-Wong}.
Note that $|z|^2=1 + |\fIm z|^2 \geq 1$ in $\bfQ^n$.
% \[ |z|^2=1+2\sum y_i^2 \geq 1. \]
Thus $\bfS^n = \bfQ^n \cap \{|z|=1\}$,
the unit sphere in $\bfR^{n+1}$, is a totally real submanifold of $\bfQ^n$.
It was proved in ~\cite{Patrizio-Wong} that the function
$u=\cosh^{-1}|z|^2$
is plurisubharmonic and satisfies
$(\partial \bpartial u)^n = 0$ in $\bfQ^n \setminus \bfS^n$.
In general any smooth Riemannian manifold is a totally real
submanifold of its cotangent bundle under a canonical complex structure,
and a theorem of Harvey and Wells~\cite{HW73}
says that the minimum set of a $C^2$
plurisubharmonic function is totally real.
In ~\cite{Guillemin-Stenzel91} Guillemin and Stenzel proved that
if $X$ is a compact real-analytic totally real submanifold of dimension
$n$ of a complex manifold $M^n$, then there is a neighborhood $M_1$ of $X$
and a nonnegative plurisubharmonic
solution of $(\partial \bpartial u)^n = 0$ in $M_1 \setminus X$  such that
$X = u^{-1} (\{0\})$ and $u^2$ is smooth and strictly plurisubharmonic
in $M_1$.
For compact symmetric spaces of rank one,
Patrizio and Wong~\cite{Patrizio-Wong} found explicit formulas for
plurisubharmonic solutions of $(\partial \bpartial u)^n = 0$
on corresponding Stein manifolds; see also \cite{Lempert-Szoke}
for related results.
As our second goal in this paper we shall apply Theorem~\ref{gblq-th20}
to prove the following result.

\begin{theorem}
\label{gblq-TR}
Let $X^n$ be a $C^3$ compact totally real submanifold of dimension $n$ in a
 complex manifold $M^n$, and $\chi$ a $(1,1)$-form on $M$.
There exists a tubular neighborhood $M_1$ of $X$ and a (weak) solution
$u \in \bar{\cH}_{\chi} (\bM_1) \cap C^{0,1} (\bM_1)$  of the HCMA equation
\begin{equation}
\label{gblq-I10H}
  (\chi_u)^n = 0 \;\; \mbox{in $\bM_1 \setminus X$}
 \end{equation}
such that $0 \leq u \leq 1$ on $\bM_1$, $X = u^{-1} (\{0\})$ and
$\partial M_1 = u^{-1} (\{1\})$.
\end{theorem}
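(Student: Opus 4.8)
The plan is to realize $M_1$ as a concrete tubular neighborhood of $X$ on which the required HCMA solution is obtained by solving a Dirichlet problem via Theorem~\ref{gblq-th20}, after first regularizing the construction so that the degenerate equation can be approached through strictly positive right-hand sides. First I would use the fact that $X$ is totally real to produce a smooth (or $C^2$) strictly plurisubharmonic function $\rho$ defined in a neighborhood of $X$ with $X = \rho^{-1}(\{0\})$, $\rho \geq 0$, and $\rho$ comparable to the square of the distance to $X$; this is essentially the Harvey--Wells type construction, adapted to the $C^3$ setting. Rescaling, set $M_1 = \{\rho < 1\}$ (shrinking so this is a smooth relatively compact tubular neighborhood diffeomorphic to the normal bundle disk bundle), so that $\partial M_1 = \{\rho = 1\}$. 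The boundary of $M_1$ is in general \emph{not} pseudoconvex with respect to $\chi$, which is precisely why the classical CKNS theory does not apply and Theorem~\ref{gblq-th20} is needed.

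Next I would set up the family of approximating Dirichlet problems on $\bM_1 \setminus \overline{U_\epsilon}$, where $U_\epsilon = \{\rho < \epsilon^2\}$ is a small tube around $X$, with boundary data $u = \epsilon$ on $\partial U_\epsilon$ and $u = 1$ on $\partial M_1$, and right-hand side $\psi = \psi_\tau > 0$ a small positive constant (or a function supported away from where we want degeneracy), letting $\tau \to 0$. For each such problem I need a subsolution $\underline u \in \cH_\chi$: here I would take $\underline u$ to be an appropriate function built from $\rho$ itself, e.g. $\underline u = a + b\,\rho$ or a convex increasing function of $\rho$, chosen so that $\chi_{\underline u} = \chi + \frac{\sqrt{-1}}{2}\partial\bpartial(\text{function of }\rho) > 0$ on the annular region — strict plurisubharmonicity of $\rho$ (with a large enough coefficient to dominate $\chi$) gives this — and so that the boundary values match after adjusting constants. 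If $\chi$ is not itself positive one uses the strengthened version of Theorem~\ref{gblq-th20} noted in the remark after it, or one simply absorbs $\chi$ by taking the coefficient of $\rho$ large. Theorem~\ref{gblq-th20} then yields $u_{\epsilon,\tau} \in \cH_\chi \cap C^\infty$ solving the approximating problem.

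The heart of the argument — and the step I expect to be the main obstacle — is obtaining uniform estimates in $\epsilon$ and $\tau$ so that the limit exists and has the stated properties: $0 \leq u \leq 1$, Lipschitz regularity ($C^{0,1}$ up to $\bM_1$), $X = u^{-1}(\{0\})$, and $\partial M_1 = u^{-1}(\{1\})$. The uniform $C^0$ bound $0 \le u_{\epsilon,\tau} \le 1$ follows from the comparison principle, using the constant functions and the boundary data. The uniform Lipschitz bound is the delicate point: one needs barriers near $X$ to force $u$ to vanish there while staying Lipschitz, and here I would compare from above with a multiple of $\sqrt{\rho}$ (modelled on $|\operatorname{\mathfrak{Im}} z|$ in the flat case $\bfR^n \subset \bfC^n$) and from below with a multiple of $\rho$, exploiting that admissible functions are plurisubharmonic (when $\chi$ is suitably dominated) together with interior gradient estimates for the Monge--Amp\`ere equation that depend only on the $C^0$ norm and the geometry. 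One also needs to verify that the level set condition survives the limit — that $u$ does not become identically $1$ near $\partial M_1$ nor identically $0$ on a full neighborhood of $X$ — which again is a matter of sandwiching between the explicit sub/super-barriers. Passing $\tau \to 0$ then $\epsilon \to 0$ (or diagonally), Arzel\`a--Ascoli gives a Lipschitz limit $u \in \bar{\cH}_\chi(\bM_1) \cap C^{0,1}(\bM_1)$ solving $(\chi_u)^n = 0$ in $\bM_1 \setminus X$ in the pluripotential/viscosity sense, with the asserted boundary behavior. The case $\dim_{\mathbb{R}} M = 2\dim X$ exactly (so that the normal bundle has the same rank as $X$) and the general strict inequality are handled uniformly by this construction since only the existence of the strictly plurisubharmonic defining function $\rho$ for $X$ is used.
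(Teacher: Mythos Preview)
Your overall strategy matches the paper's: use Harvey--Wells to produce a strictly plurisubharmonic defining function $\rho$ for $X$, take $M_1$ a sublevel set, solve approximating nondegenerate Dirichlet problems on shrinking annuli $\{\delta < \underline u < 1\}$ via Theorem~\ref{gblq-th20} with subsolution a large multiple of $\rho$, and pass to the limit. The $C^0$ bounds and the reduction of the global gradient estimate to a boundary gradient estimate (Proposition~\ref{gblq-prop-G10}) are also as in the paper.

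The genuine gap is the uniform gradient bound on the \emph{inner} boundary $\partial M_\delta$ as $\delta \to 0$. You propose comparing from above with a multiple of $\sqrt{\rho}$, but you do not verify that such a function is a supersolution of $(\chi_u)^n = \delta$ on the annulus for general $\chi$, and in fact this is not clear: away from the flat model $\bfR^n \subset \bfC^n$ with $\chi = 0$, the form $\chi + \frac{\sqrt{-1}}{2}\partial\bpartial\sqrt{\rho}$ need not have small determinant, and the Monge--Amp\`ere comparison principle requires an admissible supersolution. The paper's mechanism is different and is where the totally real hypothesis is really used. For each $p \in X$ and unit normal $\nu \in N_p X$, the condition $J(T_pX)\cap T_pX = \{0\}$ together with $\dim X = n$ forces $J\nu \in T_pX$, so the complex curve $S = S(p,\nu)$ with $T_pS = \mathrm{span}\{\nu, J\nu\}$ meets $X$ in a real curve dividing $S$ into two half-disks $S^\pm$. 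Since $\chi_u \geq 0$, the restriction of $u^\delta$ to $S$ satisfies $\Delta_S u^\delta + \mathrm{tr}(\chi|_S) \geq 0$, a \emph{linear} differential inequality; one then solves the linear Dirichlet problem $\Delta_S h^\delta + \mathrm{tr}(\chi|_S) = 0$ on $S^+_\delta$ with the same boundary data to get an upper barrier with $C^{2,\alpha}$ bounds uniform in $\delta$ (standard linear theory). The family $\{S(p,\nu)\}$ foliates a full neighborhood of $X$, so every point of $\partial M_\delta$ lies on such a curve with the outward normal tangent to it, and \eqref{gblq-T50}--\eqref{gblq-T60} give the uniform gradient bound. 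This reduction to one complex dimension is the idea your proposal is missing.

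A minor point: your final sentence about handling ``the general strict inequality'' in dimension is off --- the theorem is stated for $\dim X = n$ exactly, and the complex-curve argument above genuinely needs $J(N_pX) = T_pX$, which fails when $\dim X < n$.
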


For the defintion of weak solutions see \cite{BT76}.
The global Lipschitz regularity is the best possible as shown by the
example $u (z) = |\fIm z|$ in $\bfC^n$, which is only $C^{0,1}$ along
$\bfR^n$. It was known to
Lempert and Sz\"oke~\cite{Lempert-Szoke} that
a plurisubharmonic solution to the homogeneous Monge-Amp\`ere equation
on Stein manifolds must have singularities along its minimum set.
So the proof of Theorem~\ref{gblq-TR} involves solving
equation~\eqref{gblq-I10H} with prescribed singularity, which is not always
possible for general elliptic equations. As we shall see in
Section~\ref{gblq-T}, our proof of Theorem~\ref{gblq-TR} makes use of
Theorem~\ref{gblq-th20} in an essential way that it can not be replaced
by the previous results of Cherrier and Hanani~\cite{CH98}, \cite{CH99}.
It is also different from the approach of
Guillemin and Stenzel~\cite{Guillemin-Stenzel91}.

In her thesis \cite{Li} the second author proved the existence of a
bounded plurisubharmonic solution to \eqref{gblq-I10H} for $\chi = 0$. 
It would be interesting to prove $u \in C^{1,1} (\bM_1 \setminus X)$.
This will be treated in \cite{GL2}.

Another interesting problem is to consider extensions of Yau's theorems~\cite{Yau78}
to the Hermitian case, that is, to solve equation~\eqref{gblq-I10} on compact
Hermitian manifolds without boundary. 
A difficult question seems to be how to derive $C^0$ estimates, even for $\chi = \omega$.
Yau's estimate in the K\"ahler case ~\cite{Yau78} makes use
of Moser iteration based on his estimate for $\Delta u$ and Sobolev inequality.
His proof was subsequently
simplified by Kazdan~\cite{Kazdan78} for $n =2$, and by Aubin~\cite{Aubin78}
and Bourguignon independently for arbitrary dimension (see e.g. \cite{Siu87}
and \cite{Tian00}).
Alternative proofs were given by Kolodziej~\cite{Kolodziej98} and
Blocki~\cite{Blocki05} based on the pluripotential theory (\cite{BT82}) and
the $L^2$ stability of the complex Monge-Amp\`ere operator (\cite{CP92}).
All these proofs seem to heavily rely on the closedness or, equivalently,
existence of local potentials of $\omega$ and it is not clear to us whether
any of them can be extended to the Hermitian case.
In this paper we impose the following condition %when $n > 2$
\begin{equation}
\label{gblq-I70}
\partial \bpartial \chi = 0, \;
\partial \bpartial \chi^2 = 0
%\partial \chi \bpartial \chi = 0.
\end{equation}
which is equivalent to $\partial \bpartial \chi = 0$ and
$\partial \chi \wedge \bpartial \chi = 0$.
For $\chi = \omega$, manifolds satisfying \eqref{gblq-I70} were studied
by Fino and Tomassini~\cite{FT}.
We have following extensions of theorems of Yau~\cite{Yau78}.
In Theorems~\ref{gblq-th30} and \ref{gblq-th40} below,
$(M, \omega)$ is a compact Hermitian manifold with
$\partial M = \emptyset$.

\begin{theorem}
\label{gblq-th30}
%Let $(M, \omega)$ be a compact Hermitian manifold,
%and
Assume $\psi_u \geq 0$, $\chi$ satisfy \eqref{gblq-I70},
 and that there exists a function
$\phi \in \cH_{\chi} \cap C^{\infty} (M)$ such that
\begin{equation}
\label{gblq-I80}
\int_M \psi (z, \phi (z)) \omega^n = \int_M \chi^n.
%\mbox{Vol}\, (M).
\end{equation}
Then there exists a solution $u \in \cH_{\chi} \cap C^{\infty} (M)$ of
equation~\eqref{gblq-I10}. Moreover the solution is unique, possibly up
to a constant.
\end{theorem}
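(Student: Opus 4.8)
The plan is to follow the continuity method, reducing the problem to the a priori estimates for equation~\eqref{gblq-I10} on the closed manifold $M$. Introduce the family
\[
  \Big(\chi + \frac{\sqrt{-1}}{2}\partial\bpartial u_t\Big)^n = t\,\psi(z,u_t)\,\omega^n + (1-t)\,c_t\,e^{u_t-\phi}\,\chi^n, \qquad t\in[0,1],
\]
or some similar deformation connecting a trivially solvable equation at $t=0$ (where $u_0=\phi$ works after choosing the constant $c_0$ appropriately) to \eqref{gblq-I10} at $t=1$; here at each $t$ the constant $c_t$ is chosen so that the necessary integrability condition obtained by integrating both sides over $M$ is met, which is exactly where hypothesis~\eqref{gblq-I80} enters. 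Openness of the set of $t$ for which a smooth admissible solution exists is standard: the linearization of the Monge-Amp\`ere operator at an admissible $u$ is (after dividing by $\chi_u^n$) a second-order elliptic operator, and the presence of the $e^{u_t}$ term, or the freedom in $c_t$, makes it invertible on the appropriate H\"older spaces via the implicit function theorem. Uniqueness up to a constant follows from the maximum principle applied to the difference of two solutions, using $\psi_u\geq 0$.

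The heart of the matter is closedness, i.e. the a priori estimates $\|u_t\|_{C^{2,\alpha}(M)}\leq C$ independent of $t$. I would organize these in the usual hierarchy. The $C^0$ estimate is the difficult step flagged in the introduction; here condition~\eqref{gblq-I70} on $\chi$ is used precisely to recover enough of the K\"ahler identities that an Aubin--Yau--type Moser iteration, or the alternative pluripotential-theoretic argument, goes through. Concretely, $\partial\bpartial\chi=0$ and $\partial\chi\wedge\bpartial\chi=0$ ensure that $\chi^n$, $\chi^{n-1}\wedge\omega$, etc., behave like closed forms under integration by parts, so that the integral estimates $\int_M |u|^{p}\,\chi_u^{\,j}\wedge\omega^{n-j}$ can be controlled and fed into a Sobolev inequality exactly as in Yau's proof. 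Once $\|u_t\|_{C^0}\leq C$, the second-order estimate $\Delta u_t \leq C(1+\max_{\partial M}|\nabla u_t|^2)$ — here with no boundary — reduces to $\Delta u_t\leq C$ via a Yau/Aubin computation applied to $e^{-A u_t}(n+\Delta u_t)$, where again \eqref{gblq-I70} is needed to handle the terms involving derivatives of $\chi$ and of the torsion of $\omega$ that are absent in the K\"ahler case. Admissibility plus $\Delta u_t\leq C$ gives a two-sided bound on the eigenvalues of $\chi_{u_t}$, hence uniform ellipticity; the complex Evans--Krylov theorem then yields $C^{2,\alpha}$, and Schauder bootstrapping gives $C^\infty$.

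I expect the $C^0$ estimate to be the main obstacle, and I anticipate that it cannot be done by a maximum-principle comparison argument (there is no subsolution hypothesis here as in Theorem~\ref{gblq-th20}, only the integral condition \eqref{gblq-I80}), so one is forced into the integral/iteration route, which is exactly why the structural assumption \eqref{gblq-I70} is imposed rather than working on an arbitrary Hermitian manifold. A secondary technical point is the gradient estimate: on a closed manifold one typically interpolates $\|\nabla u_t\|_{C^0}$ between $\|u_t\|_{C^0}$ and $\|\Delta u_t\|_{C^0}$, or derives it directly by a Bochner-type argument on $|\nabla u_t|^2$; in the Hermitian setting the torsion terms must be absorbed, but with $\|u_t\|_{C^0}$ and $\Delta u_t\leq C$ already in hand this is routine. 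Finally, for the uniqueness-up-to-a-constant claim I would note that if $\psi$ is strictly increasing in $u$ the solution is genuinely unique, while if $\psi_u\equiv 0$ along the relevant range two solutions can differ by a constant, and the maximum principle rules out anything else.
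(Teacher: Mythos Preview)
Your overall strategy---continuity method with openness via the implicit function theorem, closedness via the $C^0$/$\Delta u$/Evans--Krylov hierarchy, and the Moser iteration for $C^0$ driven by condition~\eqref{gblq-I70}---is exactly the route the paper takes. The paper reduces first to the case $\psi = \psi(z)$, notes that \eqref{gblq-I70} forces $\int_M(\chi_u)^n=\int_M\chi^n$ so the linearized operator is self-adjoint with respect to $(\chi_u)^n$, and then simply invokes Yau's continuity argument; your more explicit deformation is a harmless variant.

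Two points where your description diverges from what actually happens. First, you say \eqref{gblq-I70} is ``again needed'' for the $\Delta u$ estimate to absorb torsion terms. It is not: the paper's Proposition~\ref{gblq-prop-C10} (and the gradient estimate Proposition~\ref{gblq-prop-G10}) hold on an arbitrary Hermitian manifold with no hypothesis on $\chi$ beyond positivity; the torsion is handled by the special local coordinates of Lemma~\ref{non-sym}, which arrange $\partial_j g_{i\bar i}=0$ at a point. Condition~\eqref{gblq-I70} is used \emph{only} for the $C^0$ bound. Second, in that $C^0$ step the forms that must be $\partial\bpartial$-closed are not $\chi^{n-1}\wedge\omega$ etc.\ as you wrote, but rather $\tilde\chi=\sum_{k=1}^n \chi^{k-1}\wedge(\chi_u)^{n-k}$; since $\partial\chi_u=\partial\chi$ and $\bpartial\chi_u=\bpartial\chi$, the two conditions $\partial\bpartial\chi=0$ and $\partial\chi\wedge\bpartial\chi=0$ give $\partial\bpartial\tilde\chi=0$, which is what kills the unwanted boundary term when one multiplies $(\chi_u)^n-\chi^n=\frac{\sqrt{-1}}{2}\partial\bpartial u\wedge\tilde\chi$ by $(-u)^p$ and integrates by parts.
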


Consequently, if $\cH_{\chi} \cap C^{\infty} (M) \neq \emptyset$
then for any $\psi \in C^{\infty} (M)$ there is a unique constant $c$
such that equation~\eqref{gblq-I10} has a solution in
$\cH_{\chi} \cap C^{\infty} (M)$ when $\psi$ is replaced by $c \psi$.

For $n=2$, condition~\eqref{gblq-I70} is not needed to derive
$C^0$ bounds; see Remark~\ref{gblq-remark-E30}. For general $n$,
under stronger assumptions on $\psi$ condition
\eqref{gblq-I70} may also be removed.

\begin{theorem}
\label{gblq-th40}
%Let $(M, \omega)$ be a compact Hermitian manifold.
Suppose $\cH_{\chi} \cap C^{\infty} (M) \neq \emptyset$, $\psi_u > 0$ and
\begin{equation}
\label{gblq-I90}
\lim_{u \rightarrow -\infty} \psi (\cdot, u) = 0,  \;\;
 \lim_{u \rightarrow +\infty} \psi (\cdot, u) = \infty.
\end{equation}
Then equation~\eqref{gblq-I10} has a unique solution
in $\cH_{\chi} \cap C^{\infty} (M)$.
\end{theorem}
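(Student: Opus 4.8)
The plan is to get uniqueness from the maximum principle and existence from the continuity method; the point of the hypotheses on $\psi$ is that they force the $C^0$ estimate --- the step singled out as difficult for Theorem~\ref{gblq-th30} --- essentially for free. \emph{Uniqueness.} If $u_1,u_2\in\cH_\chi\cap C^\infty(M)$ both solve \eqref{gblq-I10}, evaluate at a point $p$ where $u_1-u_2$ is maximal: there $\frac{\sqrt{-1}}{2}\partial\bpartial(u_1-u_2)\le 0$, so $0<\chi_{u_1}(p)\le\chi_{u_2}(p)$ as Hermitian forms, hence $(\chi_{u_1})^n\le(\chi_{u_2})^n$ and $\psi(p,u_1(p))\le\psi(p,u_2(p))$ at $p$; since $\psi_u>0$ this gives $u_1(p)\le u_2(p)$, i.e. $\max_M(u_1-u_2)\le 0$, and by symmetry $u_1\equiv u_2$. (The strict sign $\psi_u>0$, as opposed to $\psi_u\ge 0$ in Theorem~\ref{gblq-th30}, is precisely what rules out an undetermined additive constant.)

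\emph{Existence.} Fix $\phi\in\cH_\chi\cap C^\infty(M)$, which exists by hypothesis, and put $f:=(\chi_\phi)^n/\omega^n$, a smooth positive function on $M$. For $t\in[0,1]$ consider $\psi_t(z,u):=(1-t)f(z)e^{u-\phi(z)}+t\psi(z,u)$: each $\psi_t$ is smooth, positive, strictly increasing in $u$, tends to $0$ as $u\to-\infty$ and to $+\infty$ as $u\to+\infty$ uniformly on $M\times[0,1]$, and $u\equiv\phi$ solves the $t=0$ equation $(\chi_u)^n=\psi_0(z,u)\omega^n$. Let $T=\{t\in[0,1]:(\chi_u)^n=\psi_t(z,u)\omega^n\text{ has a solution }u\in\cH_\chi\cap C^\infty(M)\}$; then $0\in T$, and it suffices to prove $T$ open and closed. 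Openness is the implicit function theorem: the linearization at a solution $u$, namely $v\mapsto n(\chi_u)^{n-1}\wedge\frac{\sqrt{-1}}{2}\partial\bpartial v-\partial_u\psi_t(z,u)\,v\,\omega^n$, is a second order elliptic operator with strictly negative zeroth order coefficient, hence by the maximum principle and standard elliptic theory an isomorphism $C^{2,\alpha}(M)\to C^\alpha(M)$; since the bound on $\Delta u$ plus the equation forces $\chi_u>0$ in the limit, solutions stay admissible.

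\emph{Closedness} reduces to a priori bounds, uniform in $t$, for solutions of the $t$-equation. For the $C^0$ bound, at a maximum point $p$ of $u-\phi$ one has $\chi_u(p)\le\chi_\phi(p)$, so $\psi_t(p,u(p))\omega^n=(\chi_u(p))^n\le(\chi_\phi(p))^n=f(p)\omega^n\le(\sup_M f)\omega^n$; since $\inf_z\psi_t(z,v)\to\infty$ as $v\to\infty$ uniformly in $t$, this bounds $u(p)$, hence $\sup_M u$, from above. Symmetrically, at a minimum point $q$ of $u-\phi$ one has $\chi_u(q)\ge\chi_\phi(q)>0$, so $\psi_t(q,u(q))\ge f(q)\ge\inf_M f>0$, and $\sup_z\psi_t(z,v)\to 0$ as $v\to-\infty$ uniformly in $t$ bounds $u(q)$, hence $\inf_M u$, from below --- comparing with the \emph{positive} form $\chi_\phi$ rather than with $\chi$ is essential here, since $\chi$ need not be positive. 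With $\|u\|_{C^0}$ under control, the right-hand side $\psi_t(z,u)$ stays between positive constants and has bounded derivatives, so the gradient and second order estimates are obtained by the same maximum-principle arguments as in the proof of Theorem~\ref{gblq-th20}, now run on the closed manifold $M$, with the sign $\psi_u>0$ working in our favor; in particular they do not require condition~\eqref{gblq-I70}, which entered Theorem~\ref{gblq-th30} solely through the $C^0$ step. The bound on $\Delta u$ together with the equation keeps $\chi_u$ uniformly elliptic, so Evans--Krylov yields a uniform $C^{2,\alpha}$ estimate and Schauder bootstrapping upgrades it to $C^\infty$; hence $T=[0,1]$ and $t=1$ gives the solution. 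The main obstacle is thus not the $C^0$ estimate --- which the hypotheses on $\psi$ dispatch directly --- but the global second order estimate on a non-K\"ahler Hermitian manifold, i.e. the control of the torsion terms, which is handled exactly as for Theorem~\ref{gblq-th20}.
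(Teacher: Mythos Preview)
Your proof is correct and follows essentially the same strategy as the paper: maximum principle for uniqueness and for the $C^0$ bound, continuity method for existence, with openness from invertibility of the linearized operator (strictly negative zeroth order term since $\psi_u>0$) and closedness from the interior estimates of Section~\ref{gblq-G} plus Evans--Krylov. The only differences are cosmetic: the paper first replaces $\chi$ by $\chi_\phi$ (so it can compare with $\chi$ directly at the extrema of $u$) and uses the path $\psi^s=(1-s)e^u+s\psi$, and it spells out the openness step via Lax--Milgram and the Fredholm alternative rather than invoking the maximum principle plus standard Fredholm/index theory as you do.
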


In \cite{Donaldson06}, Donaldson proposed to generalize 
Yau's theorems in a different direction;  
see \cite{Weinkove07},  \cite{TWY08}, \cite{TW1}, \cite{TW2}
for some recent developments.

The degenerate complex Monge-Amp\`ere equation is very important in
 geometry and analysis. There are many challenging open questions.
Below we formulate some result for a special Dirichlet problem which, in
the K\"ahler case, has been useful in the study of geodesics in the
space of  K\"ahler metrics; see, e.g. \cite{Blocki}, \cite{Chen00},
\cite{PS}.

\begin{theorem}
\label{gblq-th50}
Let $M = N \times S$
where $N$ is a compact Hermitian manifold without
boundary,% $\mbox{dim}_C N = n-1$,
and $S$ is a compact Riemann surface with smooth boundary
$\partial S \neq \emptyset$.
Suppose $\psi \geq 0$, $\psi^{\frac{1}{n}} \in  C^2 (\bM \times R)$, and
that there exists a subsolution
$\ul{u} \in \cH_{\chi}$ satisfying \eqref{gblq-I10'}.
Then there exists a weak admissible solution $u \in C^{1,\alpha} (\bM)$,
for all $\alpha \in (0, 1)$ with $\Delta u \in L^{\infty} (M)$ of
the Dirichlet problem \eqref{gblq-I10}-\eqref{gblq-I20}.
%provided that $\phi$ is constant on each componet of $\partial M$,
\end{theorem}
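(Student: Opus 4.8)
The plan is to solve a family of nondegenerate equations by Theorem~\ref{gblq-th20}, to prove $C^0$, $C^1$ and $\Delta u$ bounds uniform in the approximation parameter, and then to pass to the limit. For $\e\in(0,1]$ I would pick $g_\e\in C^\infty(\bM\times\bfR)$ with $g_\e\to\psi^{1/n}$ in $C^2$ on compact sets and set $\psi_\e:=(g_\e+\e)^n>0$, so that $\psi_\e^{1/n}\to\psi^{1/n}$ in $C^2_{loc}$. Since $\ul u\in\cH_\chi$, on the compact set $\bM$ one has $\chi_{\ul u}\ge\d\,\o$ for some $\d>0$; a standard perturbation of $\ul u$ (as in \cite{Guan98b}), exploiting this strict positivity, produces for all small $\e$ a subsolution $\ul u_\e\in\cH_\chi\cap C^4(\bM)$ of the $\e$-problem with $\ul u_\e=\v$ on $\p M$ and $\ul u_\e\to\ul u$. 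Theorem~\ref{gblq-th20} then yields $u_\e\in\cH_\chi\cap C^\infty(\bM)$ with $(\chi_{u_\e})^n=\psi_\e(z,u_\e)\,\o^n$ in $\bM$, $u_\e=\v$ on $\p M$, and everything comes down to bounding $u_\e$ independently of $\e$.

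For the $C^0$ bound, $u_\e\ge\ul u_\e\ge\ul u-C$ follows by comparison with the subsolution, and $u_\e\le C$ from the maximum principle together with a supersolution adapted to the product $M=N\times S$ (using that $\psi_\e$ is uniformly bounded). For the gradient I would combine the interior gradient estimate --- which for this equation can be taken to depend only on $\sup|u_\e|$, $\|\psi_\e^{1/n}\|_{C^1}$ and the geometry of $M$, all uniform in $\e$ --- with a boundary gradient estimate using $\ul u_\e$ as a lower barrier and a standard upper barrier. For the Laplacian, a degenerate-type second order estimate in the spirit of \cite{CKNS} and \cite{Guan98b} --- available precisely because $\psi_\e^{1/n}$ is uniformly bounded in $C^2$, not merely $\psi_\e$ --- gives $\Delta u_\e\le C\,(1+\max_{\p M}\Delta u_\e)$ with $C$ independent of $\e$.

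The main obstacle is the uniform bound $\max_{\p M}\Delta u_\e\le C$. The boundary $\p M=N\times\p S$ is Levi flat, so the Caffarelli--Kohn--Nirenberg--Spruck boundary estimate does not apply; the point is to use that the only direction transverse to $\p M$ lies in the one-dimensional factor $S$. Along $\p M$, the tangential--tangential components of the complex Hessian of $u_\e$ are controlled by $\v$ and the interior estimate, and the tangential--normal components by a barrier built from $\ul u_\e$; the single remaining normal--normal component is then the delicate one, and it is here that $\dim_{\bfC}S=1$ is essential --- it allows that last component to be recovered from the equation $(\chi_{u_\e})^n=\psi_\e(z,u_\e)\,\o^n$ once the other parts of the Hessian are under control, via a barrier argument adapted to the product and using $\ul u_\e$. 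I expect this to be the technically heaviest step; it should be carried out following the strategy of Chen \cite{Chen00} and Blocki \cite{Blocki}, and yields $\Delta u_\e\le C$ on $\bM$, uniformly in $\e$.

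Finally one passes to the limit. Since $\chi_{u_\e}>0$, the complex Hessian of $u_\e$ is bounded below by $-\chi$, and, having uniformly bounded trace, is uniformly bounded in absolute value; hence $\Delta u_\e$ is uniformly bounded, and by the Calder\'on--Zygmund estimates $\{u_\e\}$ is bounded in $W^{2,p}(\bM)$ for every $p<\infty$, hence in $C^{1,\alpha}(\bM)$ for every $\alpha\in(0,1)$ --- but not in $W^{2,\infty}$, which is why the conclusion is stated only up to $C^{1,\alpha}$. A subsequence converges in $C^{1,\alpha}(\bM)$ for all $\alpha\in(0,1)$ and weakly in $W^{2,\infty}$ to some $u$ with $\Delta u\in L^\infty(M)$ and $u=\v$ on $\p M$; since $\chi_{u_\e}\to\chi_u$ as positive currents with locally uniformly bounded coefficients, the complex Monge--Amp\`ere operator passes to the limit \cite{BT76}, and $\psi_\e(z,u_\e)\to\psi(z,u)$, so $(\chi_u)^n=\psi(z,u)\,\o^n$ holds weakly. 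This $u$ is the asserted weak admissible solution in $C^{1,\alpha}(\bM)$, for all $\alpha\in(0,1)$, with $\Delta u\in L^\infty(M)$.
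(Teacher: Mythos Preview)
Your overall strategy --- approximate by nondegenerate problems, derive uniform $C^0$, $C^1$ and $\Delta u$ bounds, pass to the limit --- matches the paper's. The gap is in the boundary double-normal estimate, which you call ``the technically heaviest step'' and propose to handle by ``a barrier argument adapted to the product.'' In fact no new barrier is needed there, and the step is almost immediate once one makes the right observation; your description (tangential--tangential components ``controlled by $\varphi$ and the interior estimate'') misses the point, because what is required is a \emph{lower} bound on the complex-tangential block, not an upper bound.

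Here is the missing observation. Since $\dim_{\bfC}S=1$, the maximal complex subspace of $T(\partial M)$ at any boundary point is exactly $T^{1,0}N$: for $\xi\in T^{1,0}N$, both $\xi$ and $\bar\xi$ are tangent to $\partial M=N\times\partial S$. Equivalently, the boundary defining function $\sigma$ may be taken to depend only on the $S$-variable, so $\sigma_{\xi\bar\xi}=0$ for $\xi\in T^{1,0}N$, and \eqref{cma-60'} gives $(u_\e)_{\xi\bar\xi}=\ul u_{\xi\bar\xi}$ on $\partial M$. Hence
\[
\chi_{\xi\bar\xi}+(u_\e)_{\xi\bar\xi}=\chi_{\xi\bar\xi}+\ul u_{\xi\bar\xi}\ \geq\ c_0>0\qquad\text{on }\partial M,\ \ \forall\,\xi\in T^{1,0}N,\ |\xi|=1,
\]
with $c_0$ depending only on the strict subsolution $\ul u$ and \emph{not} on $\e$ or $\inf\psi$. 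This is precisely a uniform lower bound for the quantity $a$ in the expansion \eqref{gblq-B310}; since $a\cdot\big(\chi_{n\bar n}+(u_\e)_{n\bar n}\big)+b=\psi_\e\leq C$ and $b$ is already bounded by the tangential and mixed estimates, the normal--normal term is bounded above, uniformly in $\e$. In the general (non-product) case this lower bound on $a$ is the content of Proposition~\ref{prop-cma-10}, whose constant depends on $(\inf\psi)^{-1}$ and would blow up as $\e\to 0$; the product hypothesis bypasses that proposition entirely. This is the whole role of $M=N\times S$.

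A minor simplification: you need not perturb $\ul u$. Since $\chi_{\ul u}\geq\epsilon\,\omega$ on $\bM$, one has $(\chi_{\ul u})^n\geq\max\{\psi,\epsilon^n\}\,\omega^n\geq\psi_\e\,\omega^n$ for $\e\leq\epsilon$ (with a suitable choice of $\psi_\e\leq\sup\{\psi,\e^n\}$), so $\ul u$ itself serves as the subsolution for every approximating problem.
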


For applications in geometric problems it would be desirable to
allow $\chi$ to depend on $u$ and its gradient $\nabla u$;
see for instance \cite{FY}. We shall prove Theorem~\ref{gblq-th20} \
for $\chi = \chi (\cdot, u)$ which is %either positive definite or
non-decreasing in $u$, i.e.
$\chi (\cdot, u) - \chi (\cdot, v) \geq 0$ for $u \geq v$.
For the general case
we are able to deal with $\chi = \chi (\cdot, u, \nabla u)$
of the form
\begin{equation}
\label{gblq-I15}
\chi (\cdot, u, \nabla u) = G (|\nabla u|^2) \omega + H (\cdot, u)
   \partial u \wedge \bpartial u
+ \partial u \wedge \balpha (\cdot, u) + \alpha (\cdot, u) \wedge \bpartial u
+ \chi^0 (\cdot, u)
\end{equation}
where $G$, $H$, $\alpha$ and $\chi^0$ are all smooth, under suitable
assumptions on $G$ and $\chi^0$. This will appear in~\cite{GL2}.

The paper is organized as follows. In Section~\ref{gblq-P} we
 recall some basic facts and formulas for Hermitian manifolds,
%and the complex Monge-Amp\`ere operator,
fixing notations along the way. We shall also construct some
special local coordinates; see Lemma~\ref{non-sym}.
These local coordinates are crucial to
our estimate of  $\Delta u$ in Section~\ref{gblq-G} where
we also derive gradient estimates,
extending the arguments of Blocki~\cite{Blocki09}
and P.-F. Guan~\cite{GuanPF} in the K\"ahler case.
Section~\ref{gblq-B} concerns the boundary estimates for second
derivatives. In Section~\ref{gblq-R} we come back to finish the
global estimates for all (real) second derivatives which enable us to
apply the Evans-Krylov theorem %~\cite{Evans}, \cite{Krylov}
for $C^{2, \alpha}$ estimates; higher order estimates and regularity
then follow from the classical Schauder theory.
In Section~\ref{gblq-E} we discuss the $C^0$ estimates and
existence of solutions, completing the proof of
Theorem~\ref{gblq-th20} and Theorems~\ref{gblq-th30}-\ref{gblq-th50}.
Section~\ref{gblq-T} contains the proof of Theorem~\ref{gblq-TR}.
Finally, in Section~\ref{gblq-S} we discuss a Dirichlet problem
for the HCMA equation which is related to the Donaldson conjecture
in the K\"ahler case.

An earlier version (\cite{GL1}) of this article was posted on
the arXiv in June 2009. We learned afterwards of the work of
Cherrier and Hanani~\cite{Cherrier87}, \cite{Hanani96a}, \cite{Hanani96b}, 
\cite{CH98}, \cite{CH99}. We wish to thank Philippe Delano\"e for bringing 
these beautiful papers to our attention.
More recently, right before the current version was finished we received
from Tosatti and Weinkove their paper \cite{TW3} in which, among other
very interesting results, they were able to derive the $C^0$ estimates
for balanced Hermitian manifolds. We thank them for sending us the
preprint and for useful communications. Finally, the authors
wish to express their gratefulness to
Pengfei Guan and Fangyang Zheng for very helpful discussions and
suggestions.

\bigskip

\section{Preliminaries}
\label{gblq-P}
 \setcounter{equation}{0}
\medskip

%\medskip
%\subsection{Curvature and torsion of Hermitian metrics}

 Let $M^n$ be a complex manifold of dimension $n$
and $g$ a Riemannian metric on $M$. Let $J$ be the induced
almost complex structure on $M$ so $J$ is integrable and
$J^2 = - \mbox{id}$. We assume that $J$ is compatible with $g$, i.e.
\begin{equation}
\label{cma-K10}
 g (u, v) = g (Ju, Jv), \;\; u, v \in TM;
\end{equation}
such $g$ is called a {\em Hermitian} metric.
Let $\omega$ be the {\em K\"ahler} form of $g$ defined
by
\begin{equation}
\label{cma-K20}
\omega (u, v) = - g (u, Jv).
%\omega (u, v) = - \frac{1}{2} g (u, Jv).
\end{equation}
We recall that $g$ is {\em K\"ahler} if its K\"ahler form
$\omega$ is closed,  i.e. $d \omega = 0$.

The complex tangent bundle $T_C M = TM \times \bfC$ has a
natural splitting
\begin{equation}
\label{cma-K30}
 T_C M = T^{1,0} M + T^{0,1} M
\end{equation}
where $T^{1,0} M$ and  $T^{0,1} M$ are the $\pm
\sqrt{-1}$-eigenspaces of $J$. The metric $g$ is obviously extended
$\bfC$-linearly to $T_C M$, and
\begin{equation}
\label{cma-K40}
 g (u, v) = 0 \;\; \mbox{if $u,v \in T^{1,0} M$, or $u,v \in T^{0,1} M$}.
\end{equation}

Let $\nabla$ be the Chern connection of $g$. It satisfies
\begin{equation}
\label{cma-K55}
\nabla_u (g (v, w)) = g (\nabla_u v, w) + g (v, \nabla_u w)
\end{equation}
but may have nontrivial torsion.
The torsion $T$ and curvature $R$ of $\nabla$ are defined by
\begin{equation}
\label{cma-K95}
\begin{cases}
   T (u, v)  = \nabla_u v - \nabla_v u - [u,v], \\
 R (u, v) w  = \nabla_u \nabla_v w - \nabla_v \nabla_u w - \nabla_{[u,v]} w,
\end{cases}
\end{equation}
respectively.
Since $\nabla J = 0$ we have
%\[ T (Ju, Jv) = J T (u, v) \]
%\[ R (u, v) Jw = J R (u, v) w. \]
%\begin{equation}
%\label{cma-K100'}
%T (u, v, w) \equiv g (T (u, v), w) = g (T (Ju, Jv), Jw)
%            \equiv T (Ju, Jv, Jw)
%\end{equation}
\begin{equation}
\label{cma-K100}
g (R (u, v) Jw, Jx) = g (R (u, v) w, x) \equiv R (u, v, w, x).
\end{equation}
Therefore $R (u, v, w, x) = 0$ unless $w$ and $x$ are of different
type.

In local coordinates $(z_1, \ldots, z_n)$, we have
\begin{equation}
\label{cma-K60}
 J \frac{\partial}{\partial z_j} = \sqrt{-1}
\frac{\partial}{\partial z_j}, \;\;
 J \frac{\partial}{\partial \bz_j} = - \sqrt{-1}
\frac{\partial}{\partial \bz_j}.
\end{equation}
Thus, by \eqref{cma-K40}
\begin{equation}
\label{cma-K50}
 g \Big(\frac{\partial}{\partial z_j},
\frac{\partial}{\partial z_k}\Big) = 0, \;\;
  g \Big(\frac{\partial}{\partial \bz_j}, \frac{\partial}{\partial \bz_k}\Big)
   = 0.
\end{equation}
  We write
\begin{equation}
\label{cma-K70}
 g_{j\bk} = g \Big(\frac{\partial}{\partial z_j},
\frac{\partial}{\partial \bz_k}\Big),
  \;\; \{g^{i\bj}\} = \{g_{i\bj}\}^{-1}.
\end{equation}
That is, $g^{i\bj} g_{k\bj} = \delta_{ik}$.
 The K\"ahler form $\omega$ is given by
\begin{equation}
\label{cma-K80}
 \omega = \frac{\sqrt{-1}}{2} g_{j\bk} dz_j \wedge d \bz_k.
\end{equation}
For convenience we shall write
\[ \chi = \frac{\sqrt{-1}}{2} \chi_{j\bk} dz_j \wedge d \bz_k. \]

The Christoffel symbols $\Gamma^l_{jk}$ are defined by
\[ \nabla_{\frac{\partial}{\partial z_j}}
\frac{\partial}{\partial z_k}
   = \Gamma^l_{jk} \frac{\partial}{\partial z_l}. \]
 %   + \Gamma^{\bl}_{jk} \frac{\partial}{\partial \bz_l} \]
Recall that by \eqref{cma-K55} and \eqref{cma-K50},
\begin{equation}
\label{cma-K85}
\left\{ \begin{aligned}
\nabla_{\frac{\partial}{\partial z_j}} \frac{\partial}{\partial \bz_k}
 \,& = \nabla_{\frac{\partial}{\partial \bz_j}} \frac{\partial}{\partial z_k}
     = 0, \\
 \nabla_{\frac{\partial}{\partial \bz_j}} \frac{\partial}{\partial \bz_k}
 \,& = \Gamma^{\bl}_{\bj \bk} \frac{\partial}{\partial \bz_l}
     = \ol{\Gamma^l_{jk}} \frac{\partial}{\partial \bz_l}
 \end{aligned} \right.
\end{equation}
and
\begin{equation}
\label{cma-K90}
\Gamma^l_{jk} = g^{l\bm} \frac{\partial g_{k\bm}}{\partial z_j}.
\end{equation}
The torsion is given by
\begin{equation}
\label{cma-K102}
 T^k_{ij} = \Gamma^k_{ij} - \Gamma^k_{ji} = g^{k\bl} \Big(\frac{\partial
g_{j\bl}}{\partial z_i} - \frac{\partial g_{i\bl}}{\partial z_j}\Big)
\end{equation}
while the curvature
\begin{equation}
\label{cma-K110}
\begin{aligned}
 R_{i\bj k\bl} \equiv &\, R \Big(\frac{\partial}{\partial z_i},
 \frac{\partial}{\partial \bz_j}, \frac{\partial}{\partial z_k},
 \frac{\partial}{\partial \bz_l}\Big) \\
 = \,& - g_{m \bl} \frac{\partial \Gamma_{ik}^m}{\partial \bz_j}
 = - \frac{\partial^2 g_{k\bl}}{\partial z_i \partial \bz_j}
       + g^{p\bq} \frac{\partial g_{k\bq}}{\partial z_i}
                  \frac{\partial g_{p\bl}}{\partial \bz_j}.
         \end{aligned}
\end{equation}
Note that from \eqref{cma-K100} and \eqref{cma-K90}
%\begin{equation}
%\label{cma-K105}
$R_{i\bj k l}  = R_{i j k l} = 0$
%\end{equation}
but, in general $R_{i j k \bl} \neq 0$.
By \eqref{cma-K110} and \eqref{cma-K102} we have
\begin{equation}
\label{cma-K115}
R_{i \bj k\bl} - R_{k \bj i\bl}
   =  g_{m \bl} \frac{\partial T_{ki}^m}{\partial \bz_j}
   = g_{m \bl} \nabla_{\bj} T_{ki}^m,
\end{equation}
which also follows form the general Bianchi identity.

The traces of the curvature tensor
\begin{equation}
\label{cma-K120'}
%\left\{
\begin{aligned}
 R_{k\bl} \,&= g^{i\bj} R_{i\bj k\bl}, \;\;
  %  = - g^{i\bj} \frac{\partial^2 g_{k\bl}}{\partial z_i \partial \bz_j}
  %     + g^{i\bj} g^{p\bq} \frac{\partial g_{k\bq}}{\partial z_i}
  %                \frac{\partial g_{p\bl}}{\partial \bz_j}, \\
 S_{i\bj} = g^{k\bl} R_{i\bj k\bl}
  %  = - g^{k\bl} \frac{\partial^2 g_{k\bl}}{\partial z_i \partial \bz_j}
  %     + g^{k\bl} g^{p\bq} \frac{\partial g_{k\bq}}{\partial z_i}
  %                \frac{\partial g_{p\bl}}{\partial \bz_j}
   = -\frac{\partial^2}{\partial z_i \partial \bz_j} \log\det g_{k\bl}
\end{aligned} %\right.
 \end{equation}
are called the {\em first} and {\em second} Ricci tensors, respectively.
Therefore one can consider extensions of Calabi-Yau theorem for $S_{i\bj}$;
see \cite{TW3}.

The following special local coordinates, which will be used in our proof of
{\em a priori} estimates for $|\nabla u|$ and $\Delta u$, seems of interest
in itself.

\begin{lemma}
\label{non-sym}
Around a point $p \in M$ there exist local coordinates such that, at $p$,
\begin{equation}
\label{coord}
%\left\{
\begin{aligned}
   & g_{i\bj} = \delta_{ij},   \;\;
    \frac{\partial g_{i\bi}}{\partial z_j} = 0, \;\; \forall \;i, j.
\end{aligned}
%\;\; \forall \;i, j.
%\right.
\end{equation}
\end{lemma}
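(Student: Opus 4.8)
The plan is to normalize the metric in two stages. First I would choose holomorphic coordinates $(w_1,\ldots,w_n)$ centered at $p$ in which $g_{i\bj}(p)=\delta_{ij}$; this is just linear algebra, diagonalizing the positive Hermitian matrix $(g_{i\bj}(p))$ and rescaling. The remaining task is to kill the first-order terms $\partial g_{i\bi}/\partial z_j$ at $p$ by a further holomorphic change of coordinates. The natural ansatz is a quadratic correction
\[
 z_k = w_k + \tfrac{1}{2} a^k_{pq}\, w_p w_q, \qquad a^k_{pq} = a^k_{qp},
\]
which preserves $g_{i\bj}(p)=\delta_{ij}$ (the Jacobian at $p$ is the identity). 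Under this substitution the transformation rule $\tilde g_{i\bj} = g_{k\bl}\,\frac{\partial z_k}{\partial w_i}\,\overline{\frac{\partial z_l}{\partial w_j}}$ gives, differentiating at $p$,
\[
 \frac{\partial \tilde g_{i\bj}}{\partial w_m}\Big|_p
   = \frac{\partial g_{i\bj}}{\partial z_m}\Big|_p + \overline{a^j_{im}}\,,
\]
since only the holomorphic Jacobian factor contributes a derivative (the antiholomorphic factor is conjugate-holomorphic in $w$ and its $\partial/\partial w_m$-derivative vanishes). Wait — more carefully, $\partial \tilde g_{i\bj}/\partial w_m$ at $p$ equals $\partial g_{i\bj}/\partial z_m + \delta_{j\ell}\,a^i_{\ell m}\cdot 0 + \ldots$; the surviving correction term is $a^i_{jm}$ contracted appropriately, i.e. we get $\partial \tilde g_{i\bj}/\partial w_m|_p = \partial g_{i\bj}/\partial z_m|_p + a^i_{jm}$ after using $g_{i\bj}(p)=\delta_{ij}$ to lower indices.

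The point is that we only need to annihilate the \emph{diagonal} entries $\tilde g_{i\bi}$, i.e. we need $a^i_{im} = -\,\partial g_{i\bi}/\partial z_m|_p$ for all $i,m$. Since the unknowns $a^i_{im}$ (over all $i,m$) are a subset of the full symmetric-in-$(m,q)$ array $a^i_{mq}$, and the equations involve only those with the first lower index equal to the upper index, there is no overdetermination at all: simply set $a^i_{mq} := -\,\partial g_{i\bi}/\partial z_m|_p$ when $q=i$ (and symmetrize in $m\leftrightarrow q$), and $a^i_{mq}:=0$ otherwise. Symmetry $a^i_{mq}=a^i_{qm}$ is the only constraint and it is satisfied by construction. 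This determines a valid quadratic coordinate change, and in the new coordinates both conditions in \eqref{coord} hold at $p$.

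The only place to be careful — and the mild obstacle worth checking explicitly — is the precise form of the correction term in $\partial \tilde g_{i\bj}/\partial w_m|_p$: one must verify that it is exactly $a^i_{jm}$ (with the indices paired so that setting $i=j$ isolates $a^i_{im}$), rather than something symmetrized over $i$ and $j$, because if the system forced a symmetric condition on $a^i_{jm}$ in $i,j$ it could in principle clash with the symmetry in $m,q$. A short computation with \eqref{cma-K90} and the transformation law shows it does not: the holomorphic Jacobian $\partial z_k/\partial w_i = \delta_{ki} + a^k_{iq}w_q$ has $\partial/\partial w_m$-derivative $a^k_{im}$ at $p$, it multiplies $g_{k\bj}(p)=\delta_{kj}$, yielding the term $a^j_{im}$ — note the upper index is $j$, not $i$ — so the diagonal equations read $a^i_{im} = -\,\partial g_{i\bi}/\partial z_m|_p$, involving disjoint unknowns as $(i,m)$ ranges, and the construction above goes through. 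Note we make no claim about $\partial g_{i\bj}/\partial z_m$ for $i\neq j$, nor about antiholomorphic derivatives, consistent with the fact that a Hermitian (non-Kähler) metric cannot in general be made to osculate the Euclidean metric to second order. $\qed$
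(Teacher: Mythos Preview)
Your approach is correct and is essentially the same as the paper's: both perform a quadratic holomorphic coordinate change to cancel the diagonal first derivatives $\partial g_{i\bar i}/\partial z_m$ at $p$. You work with the inverse map $z_k = w_k + \tfrac{1}{2}a^k_{pq}w_pw_q$ and solve for the coefficients, whereas the paper writes the forward map $w_r = z_r + \sum_{m\neq r}\frac{\partial g_{r\bar r}}{\partial z_m}(p)\,z_mz_r + \tfrac{1}{2}\frac{\partial g_{r\bar r}}{\partial z_r}(p)\,z_r^2$ explicitly; inverting your change to leading order reproduces exactly this formula, and your computation $\partial\tilde g_{i\bar j}/\partial w_m|_p = \partial g_{i\bar j}/\partial z_m|_p + a^j_{im}$ is precisely the content of the paper's \eqref{ijj}--\eqref{gblq-P80}.
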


\begin{proof}
Let $(z_1, z_2, \cdots, z_n)$ be a local coordinate system around $p$
such that $z_i(p)=0$ for $i=1, \cdots, n$ and
\[ g_{i\bj}(p) := g \Big(\f {\p}{\p z_i}, \f {\p}{\p \bar{z_j}}\Big)
    = \d_{i j}. \]
Define new coordinates $(w_1, w_2, \cdots, w_n)$ by
\begin{equation}
\label{new-coord}
w_r = z_r + \sum_{m \neq r} \f {\p g_{r\br}}{\p z_m}(p)z_m z_r
          + \frac{1}{2} \f {\p g_{r\br}}{\p z_r}(p) z_r^2,
\;\; 1 \leq r \leq n.
\end{equation}
We have
\begin{equation}\label{new-metric}
\tg_{i\bj}: =g \Big(\f {\p}{\p w_i}, \f {\p}{\p \bar{w_j}}\Big)
 =\sum_{r,s} g_{r\bs} \f {\p z_r}{\p w_i} \overline{\f {\p z_s}{\p w_j}}.
\end{equation}
It follows that
%We now calculate $\f {\p \tg_{i\bj}}{\p w_k}(p)$, by (\ref{new-metric})
\begin{equation}
\label{ijj}
\f {\p \tg_{i\bj}}{\p w_k} =
  \sum_{r,s} g_{r\bs} \f {\p^2 z_r}{\p w_i \p w_k}
    \overline{\f {\p z_s}{\p w_j}}
  + \sum_{r,s, p} \f {\p g_{r \bs}}{\p z_p}
    \f {\p z_p}{\p w_k} \f {\p z_r}{\p w_i}  \overline{\f {\p z_s}{\p w_j}}.
\end{equation}

Differentiate (\ref{new-coord}) with respect to $w_i$ and $w_k$.
We see that, at $p$,
\[ \f {\p z_r}{\p w_i}=\d_{r i}, \;\;
\f {\p^2 z_r}{\p w_i \p w_k} = - \sum_{m \neq r} \f {\p g_{r\br}}{\p z_m}
   \Big(\f {\p z_m}{\p w_i} \f {\p z_r}{\p w_k}
      + \f {\p z_m}{\p w_k}\f {\p z_r}{\p w_i}\Big)
      - \f {\p g_{r\br}}{\p z_r} \f {\p z_r}{\p w_i} \f {\p z_r}{\p w_k}. \]
Plugging these into (\ref{ijj}), we obtain at $p$,
\begin{equation}
\label{gblq-P80}
\left\{
\begin{aligned}
\frac{\partial \tg_{i\bi}}{\partial w_k}
   = \, & \frac{\partial g_{i\bi}}{\partial z_k}
      - \frac{\partial g_{i\bi}}{\partial z_k} = 0, \;\; \forall \; i, k,\\
\frac{\partial \tg_{i\bj}}{\partial w_j}
   = \,& \frac{\partial g_{i\bj}} {\partial z_j}
      - \frac{\partial g_{j\bj}}{\partial z_i} = T_{ji}^j,
      \;\; \forall \; i \neq j, \\
\frac{\partial \tg_{i\bj}}{\partial w_k}
   = \,& \frac{\partial g_{i\bj} }{\partial z_k}, \;\; \mbox{otherwise.}
%\tg_{i\bj j}=g_{i\bj j}-g_{i \bj j}=0, & for \;\; i\neq j =k;\\
%\tg_{i\bi k}=g_{i\bi k}-g_{k \bi i}=T_{ik}^i, & for \;\; i=j \neq k;\\
%\tg_{i\bj k}=g_{i\bj k}, & otherwise.
\end{aligned}
\right.
\end{equation}
Finally, switching $w$ and $z$ gives \eqref{coord}.
\end{proof}

\begin{remark}
If, in place of \eqref{new-coord}, we
define
\begin{equation}
\label{new-coord'}
w_r = z_r + \sum_{m \neq r} \f {\p g_{m\br}}{\p z_r}(p)z_m z_r
          + \frac{1}{2} \f {\p g_{r\br}}{\p z_r}(p) z_r^2,
\;\; 1 \leq r \leq n,
\end{equation}
then under the new coordinates $(w_1, w_2, \cdots, w_n)$,
\begin{equation}
\label{gblq-P80'}
\left\{
\begin{aligned}
\frac{\partial \tg_{i\bj}}{\partial w_j} (p)
 % = \, & \frac{\partial g_{i\bj}}{\partial z_j}
  %    - \frac{\partial g_{i\bj}}{\partial z_j}
   = \, & 0, \;\; \forall \; i, j,\\
\frac{\partial \tg_{i\bi}}{\partial w_k} (p)
 % = \,& \frac{\partial g_{i\bj}} {\partial z_j}
 %    - \frac{\partial g_{j\bj}}{\partial z_i}
      = \,& T_{ki}^i, \;\; \forall \; i \neq k, \\
\frac{\partial \tg_{i\bj}}{\partial w_k} (p)
   = \,& \frac{\partial g_{i\bj} }{\partial z_k} (p), \;\; \mbox{otherwise.}
 \end{aligned}
\right.
\end{equation}
\end{remark}

In \cite{ST} Streets and Tian constructed local coordinates
\begin{equation}
\label{gblq-G100}
    g_{i\bj} = \delta_{ij},  \;\;
   \frac{\partial g_{i\bj}}{\partial z_k}
      + \frac{\partial g_{k\bj}}{\partial z_i} = 0
\end{equation}
and consequently, $T_{ij}^k = 2 \frac{\partial g_{j\bk}}{\partial z_i}$ at
a fixed point.
In general it is impossible to find local coordinates satisfying
both \eqref{coord} and \eqref{gblq-G100} simultaneously.

Let $\Lambda^{p, q}$ denote differential forms of type $(p, q)$ on $M$.
The exterior differential $d$ has a natural decomposition
$d = \partial + \bpartial$ where
\[ \partial: \Lambda^{p, q} \rightarrow \Lambda^{p+1, q} , \;\;
  \bpartial: \Lambda^{p, q} \rightarrow \Lambda^{p, q+1}.  \]
Recall that $\partial^2 = \bpartial^2 = \partial \bpartial + \bpartial \partial = 0$
and, by the Stokes theorem
\[ \int_M \partial \alpha = \int_{\partial M} \alpha,
 \;\; \forall \; \alpha \in \Lambda^{n-1, n}. \]
A similar formula holds for $\bpartial$.

For a function $u \in C^2 (M)$, $\partial \bpartial u$
is given in local coordinates by
\begin{equation}
\label{cma-K80'}
% \omega_u = \frac{\sqrt{-1}}{2} (g_{j\bk} +
\partial \bpartial u = \frac{\partial^2 u}{\partial z_i \partial \bz_j}
  dz_i \wedge d \bz_j.
\end{equation}
 Equation~\eqref{gblq-I10} thus takes the form
\begin{equation}
\label{cma2-M10}%{cma-K210}
 \det \Big(\chi_{i\bj} + \frac{\partial^2 u}{\partial z_i \partial \bz_j}\Big)
    = \psi (z, u) \det g_{i\bj}.
\end{equation}

We use $\nabla^2 u$ to denote the {\em Hessian} of $u$:
\begin{equation}
\label{cma-K215}
\nabla^2 u (X, Y) \equiv \nabla_Y \nabla_X u
   = Y (X u) - (\nabla_Y X) u, \;\; X, Y \in TM.
\end{equation}
By \eqref{cma-K85} we see that
\begin{equation}
\label{cma-K220}
\nabla_{\frac{\partial}{\partial z_i}}
 \nabla_{\frac{\partial}{\partial \bz_j}} u
   = \frac{\partial^2 u}{\partial z_i \partial \bz_j}.
\end{equation}
Consequently, the Laplacian of $u$ with respect to the Chern connection is
\begin{equation}
\label{cma-K230}
\Delta u = g^{i\bj} \frac{\partial^2 u}{\partial z_i \partial \bz_j},
\end{equation}
or equivalently,
\begin{equation}
\label{cma-K235}
\Delta u \omega^n = \frac{\sqrt{-1}}{2} \partial \bpartial u \wedge \omega^{n-1}.
\end{equation}
Integrating \eqref{cma-K235} (by parts), we obtain
\begin{equation}
\label{cma-K238}
\begin{aligned}
\frac{2}{\sqrt{-1}} \int_M \Delta u \omega^n
  = \,& \int_M \partial \bpartial u \wedge \omega^{n-1} \\
  = \,& \int_{\partial M} \bpartial u \wedge \omega^{n-1}
        + \int_M  \bpartial u \wedge \partial \omega^{n-1} \\
  = \,& \int_{\partial M} (\bpartial u \wedge \omega^{n-1} + u \partial \omega^{n-1})
   + \int_M  u \partial \bpartial \omega^{n-1}.
   \end{aligned}
\end{equation}

\bigskip

\section{Global estimates for $|\nabla u|$ and $\Delta u$}
\label{gblq-G}
\setcounter{equation}{0}
\medskip

Let $u \in \cH_{\chi} \cap C^4 (\bM)$ be a solution of (\ref{cma2-M10}).
In this section we derive the following estimates
\begin{equation}
\label{gblq-I50}
\max_{\bM} |\nabla u| \leq C_1 \left(1 + \max_{\partial M} |\nabla u|\right),
\end{equation}
\begin{equation}
\label{gblq-I60}
\max_{\bM} \Delta u\leq C_2 \left(1 + \max_{\partial M} \Delta u\right).
\end{equation}
Here we emphasize that $C_1$ and $C_2$ depend only on geometric quantities
(torsion and curvature) of $M$ and on $\chi$ as well as its covariant
derivatives, but do not depend on
$\inf \psi$ so the estimates \eqref{gblq-I50} and \eqref{gblq-I60} apply
to the degenerate case ($\psi \geq 0$);
see Propositions~\ref{gblq-prop-G10} and \ref{gblq-prop-C10}  for details.

For $\chi = \omega$ these estimates were derived by
Cherrier and Hanani~\cite{Hanani96a}, \cite{Hanani96b}, \cite{CH98}, \cite{CH99}.
The estimate for $\Delta u$ is an extension of that of Yau~\cite{Yau78}.
The gradient estimate \eqref{gblq-I50} was also independently recovered by
Blocki~\cite{Blocki09} and P.-F. Guan~\cite{GuanPF} in the K\"ahler case,
%\begin{remark}
and by Xiangwen Zhang~\cite{Zhang} for more general equations on compact
Hermitian manifolds without boundary.
%\end{remark}

We shall first assume $\chi = \chi (\cdot, u)$ is positive definite. More precisely,
\begin{equation}
\label{gblq-G1}
 \chi = \chi (\cdot, u) \geq \epsilon \omega
\end{equation}
 where $\epsilon > 0$ may depend on $\sup_M |u|$. In this case we do not need the
 subsolution $\ul{u}$ to derive \eqref{gblq-I50} and \eqref{gblq-I60}.
 At the end of this section we shall remove assumption~\eqref{gblq-G1}.

 Throughout this section we use ordinary derivatives.
For convenience we write in local coordinates,
\[ u_i = \frac{\partial u}{\partial z_i}, \;\;
   u_{\bi} = \frac{\partial u}{\partial \bz_i}, \;\;
   u_{i\bj} = \frac{\partial^2 u}{\partial z_i \partial \bz_j}, \;\;
   g_{i\bj k} = \frac{\partial g_{i\bj}}{\partial z_k},  \;\;
   g_{i\bj k\bl} = \frac{\partial^2 g_{i\bj}}{\partial z_k \partial \bz_l},
 \;\; \mbox{etc}, \]
and $\fg_{i\bj} = u_{i\bj} + \chi_{ij}$,
$\{\fg^{i\bj}\} = \{\fg_{i\bj}\}^{-1}$.
%We first present some calculation.

Suppose at a fixed point $p \in M$,% we have $g_{i\bj} = \delta_{ij}$ and
\begin{equation}
\label{gblq-G110}
 g_{i\bj} = \delta_{ij} \; \mbox{and $\{\fg_{i\bj}\}$ is diagonal.}
\end{equation}
Starting from
\begin{equation}
\label{gblq-G10}
|\nabla u|^2 = g^{k\bl} u_k u_{\bl}, \;\;
 \Delta u  = g^{k\bl} u_{k\bl},
\end{equation}
by straightforward calculation, we see that
\begin{equation}
\label{gblq-C30}%\label{cma2-M50}
\begin{aligned}
 (|\nabla u|^2)_i =\,&  u_k u_{k\bi} + (u_{ki}  - g_{k\bl i} u_{l}) u_{\bk},\\
     (\Delta u)_i =\,&  u_{k\bk i} -  g_{l\bk i} u_{k \bl},
\end{aligned}
\end{equation}
\begin{equation}
\label{cma2-M60}
\begin{aligned}
(|\nabla u|^2)_{i\bi} = \,
  & u_{i\bk} u_{k\bi} + u_{i\bi k} u_{\bk} + u_{i \bi \bk} u_k
     + |u_{ki} - g_{k\bl i} u_{l}|^2  \\
  &  - 2 \fRe \{g_{k\bl i} u_{l \bi} u_{\bk}\}
  %   - g_{l\bk \bj} (u_{ki} u_{\bl} + u_k u_{i\bl}) \\
     + (g_{l\bp i} g_{p\bk \bi} - g_{l \bk i \bi}) u_k u_{\bl}.
%+ g_{p\bk i} g_{l\bp \bi} u_k u_{\bl},
\end{aligned}
\end{equation}
\begin{equation}
\label{gblq-C40}
\begin{aligned}
(\Delta u)_{i\bi} = \,
  & u_{k\bk i\bi} - 2 \fRe \{u_{k \bj i} g_{j \bk \bi}\}
    + (g_{l\bp i} g_{p\bk \bi} + g_{p\bk i} g_{l\bp \bi}
        -  g_{l \bk i \bi}) u_{k\bl} \\
= \, & u_{i\bi k\bk} - 2 \fRe \{u_{k \bj i} g_{j \bk \bi}\}
 + (\fg_{k\bl} - \chi_{k\bl} ) (g_{p\bk i} g_{l\bp \bi} + R_{i\bi k\bl}).
\end{aligned}
\end{equation}

Next, differentiate equation (\ref{cma2-M10}) twice,
\begin{equation}
\label{gblq-M70}
\begin{aligned}
\fg^{i\bi} u_{i\bi k} u_{\bk} %(u_{i\bj k} u_{\bk} + u_k u_{i\bj \bk})
    = & \, |\nabla u|^2 f_u + f_{z_k} u_{\bk}
      + g_{i\bi k} u_{\bk} - \fg^{i\bi} (\chi_{i\bi})_k u_{\bk},
       \end{aligned}
\end{equation}
\begin{equation}
\label{gblq-C70}
\begin{aligned}
\fg^{i\bi} u_{i\bi k\bk}  = \,
        & \fg^{i\bi} \fg^{j\bj} |u_{i\bj k} + (\chi_{i\bj})_k|^2
          + (f)_{k\bk} - R_{k\bk i\bi}
         %+ g_{i\bi k \bk} - g_{i\bj k} g_{j\bi \bk}
          - \fg^{i\bi} (\chi_{i\bi})_{k \bk}.
       \end{aligned}
\end{equation}
Note that
\begin{equation}
\label{gblq-C270}
(\chi_{i\bj})_k = \nabla_k \chi_{i\bj} + g_{i\bl k} \chi_{l\bj}
                     +  u_k D_u \chi_{i\bj}.
 \end{equation}
Thus,
\[ g_{k\bl i} u_{l \bi} + (\chi_{i\bi})_k
= g_{k\bi i} \fg_{i\bi} + \nabla_k \chi_{i\bi} + T_{ik}^l \chi_{l\bi}
                     + u_k D_u \chi_{i\bi}. \]
From (\ref{cma2-M60}) and (\ref{gblq-M70}) we see that
\begin{equation}
\label{bglq-M80}
\begin{aligned}
\fg^{i\bi} (|\nabla u|^2)_{i\bi} =
\, & \fg^{i\bi} u_{i\bk} u_{k\bi}
     + \fg^{i\bi} |u_{ki} - g_{k\bl i} u_{l}|^2
     + \fg^{i\bi} R_{i \bi k \bl} u_l u_{\bk} \\
   & -  2 |\nabla u|^2 \fg^{i\bi} D_u \chi_{i\bi}
     - 2 \fg^{i\bi} \fRe\{(\nabla_k \chi_{i\bi}
                           + T_{ki}^l \chi_{l\bi}) u_{\bk}\} \\
  %(g_{l\bp i} g_{p\bk \bi} - g_{l \bk i \bi}) u_k u_{\bl}
  %   - \fg^{i\bi} |g_{l \bi i} u_{\bl}|^2 \\
   & + 2 |\nabla u|^2 f_u + 2 \fRe \{(f_{z_k} - g_{k\bi i})u_{\bk}\}.
\end{aligned}
\end{equation}

%\medskip
%\subsection{Estimates for $|\nabla u|$}%second derivatives}
%\label{gblq-C1}

\begin{proposition}
\label{gblq-prop-G10}
There exists $C_1 > 0$ depending on
$\sup_{M} |u|$, $|\psi^{\frac{1}{n}}|_{C^1}$
%$\inf_{M} D_u \psi^{\frac{1}{n}}$, $|\nabla \psi^{\frac{1}{n}}|$
and
 \begin{equation}
\label{cma-35}
 \sup_{M} |\chi|, \; \sup_{M} |\nabla \chi|, \;
\inf_M \inf_i D_u \chi_{i\bi}, \; \sup_{M} |T|, \;
\inf_M \inf_{j, l} R_{j\bj l\bl}
\end{equation}
such that \eqref{gblq-I50} holds.
\end{proposition}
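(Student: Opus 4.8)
The plan is to apply the maximum principle to an auxiliary function of the form
\[
  W = \log |\nabla u|^2 + \phi(u),
\]
where $\phi$ is a suitable function of $u$ alone, chosen at the end so that the bad terms are absorbed. Fix $p\in M$ where $W$ attains its maximum, choose the special coordinates of Lemma~\ref{non-sym} — so that $g_{i\bj}=\delta_{ij}$ and the first derivatives $g_{i\bi k}$ vanish at $p$ — together with a unitary rotation making $\{\fg_{i\bj}\}$ diagonal at $p$; this is exactly the normalization \eqref{gblq-G110} that makes formulas \eqref{gblq-C30}--\eqref{bglq-M80} available. At $p$ one has $\nabla W = 0$ and the complex Hessian $\fg^{i\bi} W_{i\bi} \le 0$. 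Writing everything out and using $|\nabla u|^2 > 0$ at $p$ (otherwise there is nothing to prove), the inequality $\fg^{i\bi} W_{i\bi}\le 0$ reads
\[
  \fg^{i\bi}\frac{(|\nabla u|^2)_{i\bi}}{|\nabla u|^2}
   - \fg^{i\bi}\frac{(|\nabla u|^2)_i (|\nabla u|^2)_{\bi}}{|\nabla u|^4}
   + \phi'(u)\,\fg^{i\bi}u_{i\bi} + \phi''(u)\,\fg^{i\bi}u_i u_{\bi} \le 0.
\]

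Next I would substitute the identity \eqref{bglq-M80} for $\fg^{i\bi}(|\nabla u|^2)_{i\bi}$. The leading positive term $\fg^{i\bi}u_{i\bk}u_{k\bi}$ and, crucially, the full-square term $\fg^{i\bi}|u_{ki}-g_{k\bl i}u_l|^2$ are the ones we keep. The key algebraic point — this is the standard Blocki/P.-F. Guan trick in the Kähler case, adapted here — is that the gradient term of $W$ satisfies, at the critical point,
\[
  \fg^{i\bi}\frac{(|\nabla u|^2)_i(|\nabla u|^2)_{\bi}}{|\nabla u|^4}
   \;\le\; (1+\eta)\,\fg^{i\bi}\frac{u_{i\bk}u_{k\bi}\,u_k u_{\bk}}{|\nabla u|^4}\;\cdot\;(\text{something})
   \;+\;C_\eta\,\frac{\fg^{i\bi}|u_{ki}-g_{k\bl i}u_l|^2}{|\nabla u|^2}
   + (\text{lower order}),
\]
so that after multiplying through by $|\nabla u|^2$ the troublesome $\fg^{i\bi}u_{i\bk}u_{k\bi}$ and square terms on the two sides cancel up to a controllable remainder, once $\eta$ is chosen small. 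What survives on the "bad" side are: the curvature term $\fg^{i\bi}R_{i\bi k\bl}u_lu_{\bk}$, which is bounded below by $(\inf_{j,l}R_{j\bj l\bl})\,|\nabla u|^2\,\sum_i\fg^{i\bi}$ (using the normalization); the torsion/$\nabla\chi$ terms $\fg^{i\bi}\fRe\{(\nabla_k\chi_{i\bi}+T_{ki}^l\chi_{l\bi})u_{\bk}\}$, bounded by $C(\sup|\nabla\chi|+\sup|T|\sup|\chi|)|\nabla u|\sum_i\fg^{i\bi}$; the $D_u\chi$ term, which is where $\inf_i D_u\chi_{i\bi}$ enters; and the $\psi$-terms $2|\nabla u|^2 f_u + 2\fRe\{(f_{z_k}-g_{k\bi i})u_{\bk}\}$ with $f=\log\psi$, controlled by $|\psi^{1/n}|_{C^1}$ (note $f_u, f_{z_k}$ do not involve $\inf\psi$ once one writes $f=n\log\psi^{1/n}$ — this is precisely why the constant is independent of $\inf\psi$).

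The final step is to choose $\phi$. With $\phi(u)=-A u$ or $\phi(u) = e^{-A u}$ for a large constant $A$ depending only on the quantities in \eqref{cma-35} and on $\sup_M|u|$: the term $\phi''(u)\fg^{i\bi}u_iu_{\bi}$ contributes $A^2 e^{-Au}|\nabla u|^2$ (using $\chi\ge\epsilon\omega$, hence $\fg^{i\bi}\le$ bounded where relevant, or rather $\sum\fg^{i\bi}u_iu_{\bi}$ comparable to $|\nabla u|^2$ on the relevant eigenspaces), which dominates the remaining first-order-in-$|\nabla u|$ bad terms; and $\phi'(u)\Delta_{\fg}u = \phi'(u)(n - \fg^{i\bi}\chi_{i\bi})$ together with the curvature term is handled by choosing $A$ large relative to $\inf R_{j\bj l\bl}^{-}$ and $\sup|\chi|$. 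One subtlety — the main obstacle — is the interplay between the coefficient of $\sum_i\fg^{i\bi}$ (from curvature) and the available positive term: unlike the complex Hessian estimate, here there is no obvious positive multiple of $\sum_i\fg^{i\bi}$, so one must instead route the curvature term through the square term and the $\phi''$ term, which forces the precise choice of the constant $\eta$ above and a careful Cauchy–Schwarz split. Once $|\nabla u|^2(p)$ is bounded by a constant $C_1$ as claimed, evaluating $W(p)\ge W(x)$ for all $x$ gives \eqref{gblq-I50}, with $\max_{\partial M}|\nabla u|$ entering only if the maximum of $W$ is attained on $\partial M$. I expect the delicate Cauchy–Schwarz bookkeeping that makes the square term exactly absorb the gradient term of $\log|\nabla u|^2$ — in the Hermitian setting where $u_{ki}$ and $u_{ik}$ differ by torsion — to be the step requiring the most care.
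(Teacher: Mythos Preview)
Your overall scaffold --- maximum principle applied to $\log|\nabla u|^2+\phi(u)$ in the coordinates of Lemma~\ref{non-sym}, plugging in \eqref{bglq-M80} --- matches the paper's. But the closing argument has a genuine gap, precisely at the point you yourself flag as ``the main obstacle.'' You write that ``there is no obvious positive multiple of $\sum_i\fg^{i\bi}$'' and propose to compensate via $\phi''(u)\fg^{i\bi}u_iu_{\bi}$, claiming this is comparable to $|\nabla u|^2$. That comparison is false: $\fg^{i\bi}u_iu_{\bi}$ is only $\geq |\nabla u|^2\min_i\fg^{i\bi}=|\nabla u|^2/\lambda_n$, and $\lambda_n$ is uncontrolled. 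The paper resolves this in two steps you are missing. First, there \emph{is} a positive multiple of $\sum_i\fg^{i\bi}$: with $\phi=Ae^{L-u}$ one computes $\fg^{i\bi}\phi_{i\bi}=\phi\fg^{i\bi}u_iu_{\bi}-\phi n+\phi\fg^{i\bi}\chi_{i\bi}$, and the standing hypothesis $\chi\geq\epsilon\omega$ (equation~\eqref{gblq-G1}) gives $\phi\fg^{i\bi}\chi_{i\bi}\geq\epsilon\phi\sum_i\fg^{i\bi}$. Second --- and this is the decisive trick --- one combines the two good terms by AM--GM:
\[
\fg^{i\bi}u_iu_{\bi}+\epsilon\sum_i\fg^{i\bi}
\;\geq\; \frac{|\nabla u|^2}{\lambda_n}+\epsilon\sum_{i<n}\frac{1}{\lambda_i}
\;\geq\; n\,\epsilon^{\frac{n-1}{n}}|\nabla u|^{\frac{2}{n}}\psi^{-\frac{1}{n}},
\]
producing a term $\psi^{-1/n}|\nabla u|^{2/n}$ on the good side. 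This is what allows the argument to close without $\inf\psi$.

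Relatedly, your remark that ``$f_{z_k}$ do not involve $\inf\psi$ once one writes $f=n\log\psi^{1/n}$'' is incorrect: $f_{z_k}=n(\psi^{1/n})_{z_k}/\psi^{1/n}$ certainly carries a $\psi^{-1/n}$. The point is rather that after the AM--GM step the good side also carries $\psi^{-1/n}$, so these factors cancel in the final comparison, yielding $|\nabla u|^{2/n}\leq C(\psi^{1/n}+D_u\psi^{1/n}+|\nabla\psi^{1/n}|)$. Your Cauchy--Schwarz treatment of the $|(|\nabla u|^2)_i|^2$ term is workable (the paper instead uses the critical-point relation $(|\nabla u|^2)_i=-\phi_i|\nabla u|^2$ to rewrite it exactly), but without the $\epsilon\sum\fg^{i\bi}$ term and the AM--GM combination the proof cannot be completed.
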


\begin{proof}
Let $L = \inf_M u$ and $\phi =A e^{L-u}$ where $A > 0$ is constant
to be determined. Suppose that $e^{\phi} |\nabla u|^2$ attains its
maximum at an interior point $p \in M$ where all calculations are
done in this proof.
By Lemma~\ref{non-sym} we assume \eqref{gblq-G110} and \eqref{coord} hold
at $p$ so that $g_{k\bi i} = T_{ik}^i$.
Since $e^{\phi} |\nabla u|^2$ attains its maximum at $p$,
\begin{equation}
\label{gblq-G30}
  \frac{(|\nabla u|^2)_i}{|\nabla u|^2} + \phi_i = 0, \;\;
\frac{(|\nabla u|^2)_{\bi}}{|\nabla u|^2} + \phi_{\bi} = 0
\end{equation}
and
\begin{equation}
\label{gblq-G40}
 \frac{(|\nabla u|^2)_{i\bi}}{|\nabla u|^2}
-  \frac{|(|\nabla u|^2)_{i}|^2}{|\nabla u|^4}
  + \phi_{i\bi} \leq 0.
\end{equation}
By \eqref{gblq-C30} and \eqref{gblq-G30},
\begin{equation}
\label{gblq-G50}
 |(|\nabla u|^2)_i|^2 = \sum_k |u_{k}|^2 |u_{ki}  - g_{k\bl i} u_{l}|^2
 - 2 |\nabla u|^2 \fRe \{u_i u_{i\bi} \phi_{\bi}\} - |u_i|^2  u_{i\bi}^2.
\end{equation}
Combining \eqref{gblq-G40}, \eqref{gblq-G50} and \eqref{bglq-M80}, we obtain
\begin{equation}
\label{bglq-M20}
\begin{aligned}
 |\nabla u|^2 \fg^{i\bi} \phi_{i\bi}
 \, &  - 2  \fRe \{\fg^{i\bi} u_i u_{i\bi} \phi_{\bi}\} \\
    &\, \leq 2 |\nabla f| |\nabla u| + |\nabla u|^2 (C - 2f_u)
      + C |\nabla u|^2 \sum \fg^{i\bi}
  \end{aligned}
\end{equation}
where $C$ depends on the quantities in \eqref{cma-35}.
%$|T|$, $|\chi|$ and $|\nabla \chi|$.

 We have $\phi_i = - \phi u_i$, $\phi_{i\bi} = \phi (u_i u_{\bi} - u_{i\bi})$.
Therefore,
\begin{equation}
\label{gblq-G60}
\begin{aligned}
\fg^{i\bi}  \fRe\{u_i u_{i\bi} \phi_{\bi}\}
%  = & \,  - \nu  A  \fg^{i\bi} u_{i\bi} u_{i} u_{\bi}
  =  - \phi |\nabla u|^2 + \phi \fg^{i\bi} \chi_{i\bi} u_{i} u_{\bi}
\geq - \phi |\nabla u|^2
     \end{aligned}
\end{equation}
and, by assumption~\eqref{gblq-G1},
\begin{equation}
\label{gblq-G70}
\begin{aligned}
\fg^{i\bi} \phi_{i\bi}
  = \, &  \phi \fg^{i\bi} u_{i} u_{\bi}
          - \phi + \phi \fg^{i\bi} \chi_{i\bi} \\
 \geq \, & - \phi + \phi \fg^{i\bi} u_{i} u_{\bi}
+ \epsilon \phi \sum \fg^{i\bi}.
      \end{aligned}
\end{equation}
 Note that
 \begin{equation}
\label{gblq-G80}
\begin{aligned}
 \fg^{i\bi} u_{i} u_{\bi} + \epsilon \sum \fg^{i\bi}
 \geq \, & |\nabla u|^2 \min_{i} \fg^{i\bi}
         + \epsilon  \sum \fg^{i\bi} \\
 \geq & \,  n \epsilon^{\frac{n-1}{n}}
          |\nabla u|^{\frac{2}{n}} (\det \fg^{i\bj})^{\frac{1}{n}}.
%    = & \, n \epsilon^{\frac{n-1}{n}} B^{\frac{n+1}{n}} \psi^{-\frac{1}{n}}
%                 |\nabla u|^{\frac{2}{n}}.
     \end{aligned}
\end{equation}
Thus
\begin{equation}
\label{bglq-M20'}
\begin{aligned}
n \epsilon^{\frac{n-1}{n}}
\psi^{-\frac{1}{n}} |\nabla u|^{\frac{2}{n}}
          \,& + \fg^{i\bi} u_{i} u_{\bi} + \epsilon \sum \fg^{i\bi}
 \leq 2 (1 + \phi^{-1} \fg^{i\bi} \phi_{i\bi}) \\
 \leq \,& 2 + 4 (|\nabla f| |\nabla u|^{-1} - f_u) \phi^{-1}
          + C_1 \phi^{-1} \sum \fg^{i\bi} \\
        & + (2^{-1} + C_2 \phi^{-1}) \fg^{i\bi} u_{i} u_{\bi}.
\end{aligned}
\end{equation}
Choose $A$ sufficiently large so that $\epsilon \phi \geq C_1$ and
$\phi \geq 2 C_2$. We see that
\[ |\nabla u|^{\frac{2}{n}}
    \leq C_3 (\psi^{\frac{1}{n}} +
          D_u \psi^{\frac{1}{n}} + |\nabla \psi^{\frac{1}{n}}|). \]
 This proves \eqref{gblq-I50}.
\end{proof}

\begin{lemma}
\label{gblq-lemma-C10}
Assume that \eqref{coord} and \eqref{gblq-G110} hold at $p \in M$ .
Then at $p$,
\begin{equation}
\label{gblq-C120}
\begin{aligned}
\fg^{i\bi} (\Delta u)_{i\bi} \geq
\, &  \fg^{i\bi} \fg^{j\bj} |u_{i\bj j} + (\chi_{i\bj})_j|^2
       + \Delta (f) - n^2 \inf_{j,k} R_{j\bj k\bk} \\
   &  + c_1 (\Delta u + \mbox{tr} \chi)\sum \fg^{i\bi}
   - (c_2+ c_3 |\nabla u|^2) \sum \fg^{i\bi}
%  &   + \inf_{j,k} R_{j\bj k\bk} \sum \fg^{i\bi}.
       \end{aligned}
\end{equation}
where $c_1 = \inf R_{j\bj k\bk} - \sup D_u \chi_{i\bi}$ and
$c_3 = 0$ if $\chi$ does not depend on $u$.
\end{lemma}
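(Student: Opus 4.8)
The plan is to substitute the twice-differentiated equation \eqref{gblq-C70} (summed over $k$) into the contracted form of \eqref{gblq-C40}, and then estimate each resulting term from below at $p$ using \eqref{coord}, \eqref{gblq-G110}, \eqref{gblq-C270} and the bounds \eqref{cma-35} on torsion, curvature and the covariant derivatives of $\chi$. Contracting \eqref{gblq-C40} with $\fg^{i\bi}$ and inserting \eqref{gblq-C70}, one sees that $\fg^{i\bi}(\Delta u)_{i\bi}$ equals: the nonnegative ``gradient-of-Hessian'' term $\fg^{i\bi}\fg^{j\bj}|u_{i\bj k}+(\chi_{i\bj})_k|^2$; the scalar $\Delta(f)$; a standalone curvature term $-\sum_k R_{k\bk i\bi}$ coming from differentiating $\log\det$; the term $-\fg^{i\bi}(\chi_{i\bi})_{k\bk}$; the third-order cross term $-2\fRe\{\fg^{i\bi}u_{k\bj i}g_{j\bk\bi}\}$; and the term $\fg^{i\bi}(\fg_{k\bl}-\chi_{k\bl})(g_{p\bk i}g_{l\bp\bi}+R_{i\bi k\bl})$ from \eqref{gblq-C40}. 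The first two are kept as they stand; the rest must be controlled.

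The heart of the argument is the third-order estimate. Since $u_{k\bj i}=u_{i\bj k}$ for ordinary derivatives and $\overline{g_{j\bk\bi}}=g_{k\bj i}$, completing the square against the ``gradient-of-Hessian'' term gives, at $p$,
\begin{equation*}
\begin{aligned}
\fg^{i\bi}\fg^{j\bj}|u_{i\bj k}+(\chi_{i\bj})_k|^2 - 2\fRe\{\fg^{i\bi}u_{k\bj i}g_{j\bk\bi}\}
 = \, & \fg^{i\bi}\fg^{j\bj}\big|u_{i\bj k}+(\chi_{i\bj})_k-\fg_{j\bj}g_{k\bj i}\big|^2 \\
   & - \fg^{i\bi}\fg_{j\bj}|g_{k\bj i}|^2 + 2\fRe\{\fg^{i\bi}(\chi_{i\bj})_k g_{j\bk\bi}\}.
\end{aligned}
\end{equation*}
On the other hand, by \eqref{gblq-G110} the metric part of the last term of \eqref{gblq-C40} splits at $p$ as $\fg^{i\bi}(\fg_{k\bl}-\chi_{k\bl})g_{p\bk i}g_{l\bp\bi}=\fg^{i\bi}\fg_{k\bk}|g_{p\bk i}|^2-\fg^{i\bi}\chi_{k\bl}g_{p\bk i}g_{l\bp\bi}$, and after relabelling indices the first piece cancels the term $-\fg^{i\bi}\fg_{j\bj}|g_{k\bj i}|^2$ above; this is the crucial cancellation, for otherwise an uncontrollable multiple of $(\Delta u+\tr\chi)\sum\fg^{i\bi}$ would survive. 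Discarding the remaining nonnegative square except for its $k=j$ part and using $g_{j\bj i}=0$ from \eqref{coord}, what is left is exactly $\fg^{i\bi}\fg^{j\bj}|u_{i\bj j}+(\chi_{i\bj})_j|^2$, the good term in \eqref{gblq-C120}. By \eqref{gblq-C270} one has $|(\chi_{i\bj})_k|\le C(1+|\nabla u|)$ with $C$ depending on \eqref{cma-35}, so the residual cross term $2\fRe\{\fg^{i\bi}(\chi_{i\bj})_k g_{j\bk\bi}\}$ and the bounded term $-\fg^{i\bi}\chi_{k\bl}g_{p\bk i}g_{l\bp\bi}$ are both $\ge -(c_2+c_3|\nabla u|^2)\sum\fg^{i\bi}$, the coefficient $c_3$ vanishing if $\chi$ does not depend on $u$.

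The remaining terms yield the $(\Delta u+\tr\chi)\sum\fg^{i\bi}$ contribution. At $p$, \eqref{gblq-G110} gives $\fg^{i\bi}(\fg_{k\bl}-\chi_{k\bl})R_{i\bi k\bl}=\fg^{i\bi}\fg_{k\bk}R_{i\bi k\bk}-\fg^{i\bi}\chi_{k\bk}R_{i\bi k\bk}$; since $\fg_{k\bk}>0$ and $\sum_k\fg_{k\bk}=\Delta u+\tr\chi$ at $p$, the first piece is $\ge \inf_{j,k}R_{j\bj k\bk}(\Delta u+\tr\chi)\sum\fg^{i\bi}$ and the second is absorbed into $c_2\sum\fg^{i\bi}$, while $-\sum_k R_{k\bk i\bi}$ gives $-n^2\inf_{j,k}R_{j\bj k\bk}$. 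Expanding $-\fg^{i\bi}(\chi_{i\bi})_{k\bk}$ by differentiating \eqref{gblq-C270} once more: its $u_{k\bk}D_u\chi_{i\bi}$ piece sums to $-\fg^{i\bi}(\Delta u)D_u\chi_{i\bi}$, and writing $\Delta u=(\Delta u+\tr\chi)-\tr\chi$ (with $\Delta u+\tr\chi>0$) gives $\ge -\sup_i D_u\chi_{i\bi}\,(\Delta u+\tr\chi)\sum\fg^{i\bi}$ plus a bounded term; its $u_k u_{\bk}D_u^2\chi_{i\bi}$ piece gives $\ge -c_3|\nabla u|^2\sum\fg^{i\bi}$; the remaining pieces are $\ge -(c_2+c_3|\nabla u|^2)\sum\fg^{i\bi}$. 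Collecting, the $(\Delta u+\tr\chi)\sum\fg^{i\bi}$ terms combine with coefficient $c_1=\inf_{j,k}R_{j\bj k\bk}-\sup_i D_u\chi_{i\bi}$, and \eqref{gblq-C120} follows.

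I expect the main obstacle to be this third-order step: finding the exact completion of the square under which the $|g_{p\bk i}|^2$ terms from \eqref{gblq-C70} and from \eqref{gblq-C40} cancel, so that no term $\sim(\Delta u)\sum\fg^{i\bi}$ of uncontrolled sign is left behind, and then keeping careful track of all the $\nabla u$-linear and metric-derivative remainders to verify that they all fit inside $(c_2+c_3|\nabla u|^2)\sum\fg^{i\bi}$, with the gradient-dependent pieces disappearing when $\chi$ is independent of $u$.
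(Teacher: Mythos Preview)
Your proposal is correct and follows essentially the same route as the paper: the paper phrases the third-order estimate as a Cauchy--Schwarz inequality applied to the $j\neq k$ terms of $2\fRe\{[u_{i\bj k}+(\chi_{i\bj})_k]g_{j\bk\bi}\}$, whereas you complete the square over all $j,k$ and then drop the $k\neq j$ part, but these are algebraically identical and both rely on $g_{j\bj i}=0$ from \eqref{coord} to isolate the good $k=j$ term and on the cancellation of $\fg^{i\bi}\fg_{j\bj}|g_{k\bj i}|^2$ against the metric piece of the last term in \eqref{gblq-C40}. The remaining bookkeeping---expanding $(\chi_{i\bi})_{k\bk}$ via \eqref{gblq-C270}, extracting $c_1(\Delta u+\tr\chi)\sum\fg^{i\bi}$ from the curvature and $D_u\chi$ contributions, and absorbing all lower-order remainders into $(c_2+c_3|\nabla u|^2)\sum\fg^{i\bi}$---matches the paper's treatment of the term $G_{i\bi}$ exactly.
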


\begin{proof}

%From \eqref{coord} we have,
Write
\begin{equation}
\label{gblq-C100}
 u_{k \bj i} g_{j \bk \bi}
 = [u_{i \bj k} + (\chi_{i\bj})_k] g_{j \bk \bi}
 - (\chi_{i\bj})_k g_{j \bk \bi}.
\end{equation}
By Cauchy-Schwarz inequality,
\begin{equation}
\label{gblq-C110}
\begin{aligned}
 2 \sum_{j \neq k} |\fRe \{[u_{i \bj k} + (\chi_{i\bj})_k] g_{j \bk \bi}\}|
 \leq \, & \sum_{j \neq k} \fg^{j\bj} |u_{i \bj k} + (\chi_{i\bj})_k|^2
 + \sum_{j \neq k} \fg_{j\bj} |g_{k\bj i}|^2.
   \end{aligned}
\end{equation}
Combining  \eqref{gblq-C40},  \eqref{gblq-C70}, \eqref{gblq-C100},
and \eqref{gblq-C110},
we derive
\begin{equation}
\label{gblq-C120'}
\begin{aligned}
\fg^{i\bi} (\Delta u)_{i\bi} \geq
\, &  \fg^{i\bi} \fg^{j\bj} |u_{i\bj j} + (\chi_{i\bj})_j|^2
     + \Delta (f) - \fg^{i\bi} G_{i\bi} \\
   & + (\fg^{i\bi} \fg_{j\bj} - 1) R_{i\bi j\bj}
     - \fg^{i\bi} R_{i\bi l\bk} \chi_{l\bk}
       \end{aligned}
\end{equation}
where
\[ G_{i\bi}= g_{j\bl i} g_{k\bj \bi} \chi_{l\bk}
       - 2 \fRe\{(\chi_{i\bj})_k g_{j \bk \bi}\} + (\chi_{i\bi})_{k\bk}. \]
Next,
\[ (\chi_{i\bi})_{k\bk} =  \chi_{i\bi k\bk}
      + 2 \fRe\{u_{\bk} (D_u \nabla_k \chi_{i\bi} + g_{i\bl k} D_u \chi_{l\bi})\}
      + u_k u_{\bk} D^2_u \chi_{i\bi} + u_{k\bk} D_u \chi_{i\bi}\]
where
\[ \chi_{i\bi k\bk}
  = \nabla_{\bk} \nabla_k \chi_{i\bi} - R_{k\bk i\bl} \chi_{l\bi}
    + g_{i\bl k} g_{m\bi \bk} \chi_{l\bm}
    + 2 \fRe\{g_{i\bj k} \nabla_{\bk} \chi_{j\bi}\}. \]
It follows from \eqref{gblq-C270} and the identity
\[ |g_{i\bj k}|^2 + |g_{k\bj i}|^2 - 2 \fRe \{g_{i\bj k} g_{j \bk \bi}\}
  = |g_{k\bj i} - g_{i\bj k}|^2 = |T_{ik}^j|^2 \]
that
\begin{equation}
\begin{aligned}
 G_{i\bi} =   & \nabla_{\bk} \nabla_k \chi_{i\bi}
     - R_{k\bk i\bj} \chi_{j\bi} + T_{ik}^j  T_{ik}^l \chi_{j\bl}
     - 2 \fRe \{T_{ik}^j \nabla_{\bk} \chi_{j\bi}\} \\
   & + 2 \fRe \{u_{\bk} \nabla_k D_u \chi_{i\bi}
     - T_{ik}^j u_{\bk} D_u \chi_{j\bi}\} \\
   & + |\nabla u|^2 D^2_u \chi_{i\bi} + \Delta u  D_u \chi_{i\bi}.
       \end{aligned}
\end{equation}
Plugging this into \eqref{gblq-C120'} we prove \eqref{gblq-C120}.
\end{proof}

\begin{proposition}
\label{gblq-prop-C10}
%Suppose . Then
There exists constant $C_2 > 0$ depending on
\[ \mbox{$ |u|_{C^1 (\bM)}$, %\;\; \inf_{M} (D_u \psi^{\frac{1}{n}})_u,
$|\psi^{\frac{1}{n-1}}|_{C^2 (\bM \times \bfR)}$,
$|\chi|_{C^2 (\bM \times \bfR)}$} \]
and the geometric quantities (curvature and torsion) of $M$,
%  \inf_{j, k} R_{j\bj k\bk}, \; \sup_k |R_{k\bk} - S_{k\bk}|, \; |T|^2 \]
such that \eqref{gblq-I60} holds.
If both $\chi$ and $\psi$ are independent of $u$, then $C_2$ does not depend
on $|\nabla u|_{C^0 (\bM)}$.
%\label{gblq-C240}
\end{proposition}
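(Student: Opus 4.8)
\medskip
\noindent\textbf{Proof plan.}
The plan is to run Yau's second-order estimate in the Hermitian setting, with Lemma~\ref{gblq-lemma-C10} as the engine and the special coordinates of Lemma~\ref{non-sym} keeping the torsion terms under control. I would set $C_0 = 1 + \sup_M |\tr \chi|$ so that $\eta := \Delta u + C_0 > 0$ on $\bM$, and consider
\[
   W = \log \eta + \phi, \qquad \phi = A e^{L-u}, \quad L = \inf_M u,
\]
with $A > 0$ large, to be fixed (the same weight as in Proposition~\ref{gblq-prop-G10}). If $W$ attains its maximum over $\bM$ on $\partial M$, then since $\phi$ is bounded one gets $\eta \le C(1 + \max_{\partial M} \Delta u)$ everywhere and \eqref{gblq-I60} follows; so assume $W$ is maximized at an interior point $p$, where I choose coordinates so that \eqref{coord} and \eqref{gblq-G110} hold at $p$ (hence $g_{k\bi i} = T^i_{ik}$ there).

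At $p$ the critical equations give $\eta_i/\eta = -\phi_i = \phi u_i$, so the gradient term is harmless: $\fg^{i\bi}|\eta_i|^2/\eta^2 = \phi^2 \fg^{i\bi}|u_i|^2 \le C \sum \fg^{i\bi}$, with $C$ depending on $A$, $\mathrm{osc}_M u$ and $|\nabla u|_{C^0}$. The inequality $\fg^{i\bi} W_{i\bi} \le 0$ then reads
\[
  \frac{1}{\eta}\, \fg^{i\bi}(\Delta u)_{i\bi} + \fg^{i\bi}\phi_{i\bi} \le C \sum \fg^{i\bi}.
\]
I would bound $\fg^{i\bi}(\Delta u)_{i\bi}$ below by Lemma~\ref{gblq-lemma-C10} and compute $\phi_{i\bi} = \phi(|u_i|^2 - \fg_{i\bi} + \chi_{i\bi})$, so that assumption \eqref{gblq-G1} gives $\fg^{i\bi}\phi_{i\bi} \ge - n\phi + \epsilon \phi \sum \fg^{i\bi}$. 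Multiplying through by $\eta$ and choosing $A$ large enough that $\epsilon\phi\,\eta\sum\fg^{i\bi}$ dominates all negative contributions proportional to $\sum\fg^{i\bi}$ or to $\eta\sum\fg^{i\bi}$ (whose coefficients involve only the curvature and torsion of $M$, $|\chi|_{C^2}$, and $|\nabla u|_{C^0}$; one may assume $\eta \ge 1$, else $\Delta u$ is already bounded), I expect to land on an inequality of the form
\[
  \fg^{i\bi}\fg^{j\bj}|u_{i\bj j} + (\chi_{i\bj})_j|^2 + \eta \sum \fg^{i\bi} \le C(1 + \phi\,\eta) - \Delta(\log\psi).
\]

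The crux is the term $-\Delta(\log\psi)$, which is singular where $\psi$ vanishes, since $\Delta\log\psi$ contains $\psi^{-1}\nabla^2\psi$ and $-\psi^{-2}|\nabla\psi|^2$; a bound independent of $\inf_M\psi$ therefore cannot come from estimating it directly, and the good third-order term $\fg^{i\bi}\fg^{j\bj}|u_{i\bj j} + (\chi_{i\bj})_j|^2$ must be retained. Writing $\psi = h^{n-1}$ with $h = \psi^{1/(n-1)} \in C^2(\bM\times\bfR)$, one differentiates the equation once --- by \eqref{gblq-M70}, the contraction $\fg^{i\bi}(u_{i\bi k} + (\chi_{i\bi})_k)$ equals $(\log h)_k$ up to bounded quantities, hence is controlled by $h_{z_k}$, $h_u$, $u_k$ --- and applies Cauchy--Schwarz to bound the singular part of $-\Delta\log\psi$ by a small fraction of the good term plus quantities controlled by $|h|_{C^2}$, $|\nabla u|_{C^0}$, $\sup_M\psi$ and $\sum\fg^{i\bi}$, the last being reabsorbed into $\eta\sum\fg^{i\bi}$. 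After this absorption the displayed inequality becomes $\eta\sum\fg^{i\bi} \le C(1 + \eta)$. Finally, from $\det\fg_{i\bj} = \psi\det g_{i\bj}$ and $g_{i\bj}(p) = \delta_{ij}$ one has $\sum\fg^{i\bi} \ge n\psi^{-1/n}$, and more to the point, when $\eta$ is large one eigenvalue $\fg_{1\bar{1}}$ of $\fg$ dominates, so $\sum\fg^{i\bi} \ge (n-1)(\fg_{1\bar{1}}/\psi)^{1/(n-1)} \ge c\,(\eta/\psi)^{1/(n-1)}$; substituting gives $c\,\eta^{n/(n-1)}\psi^{-1/(n-1)} \le C(1+\eta)$, hence $\eta(p) \le C$ with $C$ of the asserted type. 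Then $W \le W(p) \le C$ on $\bM$, which gives $\eta \le C$ everywhere and proves \eqref{gblq-I60}.

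I expect the main obstacle to be precisely this last absorption: arranging the Cauchy--Schwarz so that \emph{every} term carrying a negative power of $\psi$, including the cross terms coming from $D_u\psi$ and $D_u\chi$, is dominated by the reserved third-order term uniformly as $\inf_M\psi \to 0$. The Hermitian torsion is a secondary bookkeeping issue: it enters through the $g_{i\bj k}$-type factors in \eqref{gblq-C30}--\eqref{gblq-C40} and through the commutator identity $R_{i\bj k\bl} - R_{k\bj i\bl} = g_{m\bl}\nabla_{\bj}T^m_{ki}$, but these are already packaged into Lemma~\ref{gblq-lemma-C10}, and the choice of coordinates in Lemma~\ref{non-sym} (making $g_{i\bi k} = 0$ at $p$) is what prevents them from entering at leading order. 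For the last assertion of the Proposition, one checks that $|\nabla u|_{C^0}$ entered only through the $u$-dependence of $\chi$ and $\psi$ (the constant $c_3$, and the $D_u\chi$, $D_u\psi$ terms); when both are independent of $u$ these are absent, $\phi$ may be replaced by a bounded weight such as $-Au$, and $C_2$ becomes independent of $|\nabla u|_{C^0}$.
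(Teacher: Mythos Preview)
Your overall strategy matches the paper's closely: the same test function (the paper writes $\varPhi = e^{\phi}(a+\Delta u)$ with $a = \sup\tr\chi$, which is your $e^W$), the same weight $\phi = Ae^{L-u}$, the same special coordinates from Lemma~\ref{non-sym}, and Lemma~\ref{gblq-lemma-C10} as the engine. The structure of the argument is essentially the paper's.

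The substantive difference is in how you dispose of the gradient term $\fg^{i\bi}|(\Delta u)_i|^2/\eta^2$ at the interior maximum. You use the critical-point equation to write it as $\phi^2\fg^{i\bi}|u_i|^2 \le C|\nabla u|^2\sum\fg^{i\bi}$ and absorb it later. The paper instead decomposes $(\Delta u)_i = u_{i\bj j} + (\chi_{i\bj})_j + \lambda_i$ with $\lambda_i = -\nabla_j\chi_{i\bj} + T_{ij}^l\chi_{l\bj} - u_j D_u\chi_{i\bj}$, and then absorbs the main piece into the third-order term from Lemma~\ref{gblq-lemma-C10} via the Cauchy--Schwarz inequality
\[
 \fg^{i\bi}\,|u_{i\bj j}+(\chi_{i\bj})_j|^2 \;\le\; (\tr\chi+\Delta u)\,\fg^{i\bi}\fg^{j\bj}\,|u_{i\bj j}+(\chi_{i\bj})_j|^2
\]
(this is \eqref{gblq-C140}); the remaining $\lambda_i$-terms are bounded and involve $\nabla u$ only through $D_u\chi$. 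This is not a cosmetic difference: it is exactly what yields the last assertion of the Proposition. In your route the bound on the gradient term carries a factor $|\nabla u|^2$ regardless of whether $\chi,\psi$ depend on $u$, and your proposed fix---replacing $\phi$ by $-Au$---does not help, since then $|\phi_i|^2 = A^2|u_i|^2$ and the same $|\nabla u|^2$ appears. So as written your argument does not recover the gradient-independence clause; you need the paper's Cauchy--Schwarz against the third-order term for that.

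A secondary point: once the paper has spent the third-order term on the gradient piece, it handles $-\Delta(\log\psi)$ not by absorbing it into $G$ (there is none left) but by playing it against the large positive term $\phi(a+\Delta u)\cdot\dfrac{\epsilon}{2\psi^{1/(n-1)}}(\tr\chi+\Delta u)^{1/(n-1)}$ that comes from \eqref{gblq-C150'} and \eqref{gblq-C190}. Your alternative plan---keep $G$ and absorb the singular $|\nabla h|^2/h^2$ into it by Cauchy--Schwarz through the once-differentiated equation---is reasonable in spirit, but the inequality you would get is of the form $\sum_k|(\log\psi)_k|^2 \le \big(\sum\fg^{i\bi}\big)\cdot\eta\cdot G$ (one factor of $\eta$ enters when you compare $\fg^{i\bi}|u_{i\bi k}|^2$ to $\fg^{i\bi}\fg^{k\bk}|u_{i\bi k}|^2$), and this extra $\eta\sum\fg^{i\bi}$ prevents the absorption from closing as you describe. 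You should expect to need the paper's mechanism here as well.
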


\begin{proof}
Let $a = \sup \mbox{tr} \chi$ and consider the function
$\varPhi = e^\phi (a + \Delta u)$ where $\phi = A e^{L -u}$
and $L = inf_M u$ as in the proof of Proposition~\ref{gblq-prop-G10}.
Suppose $\varPhi$ achieves its maximum at an interior point $p \in M$
where we assume \eqref{coord} and \eqref{gblq-G110} hold.
 We have (all calculations are done at $p$ below)
\begin{equation}
\label{gblq-C80}
  \frac{(\Delta u )_i}{a + \Delta u} + \phi_i = 0, \;\;
\frac{(\Delta u)_{\bi}}{a + \Delta u} + \phi_{\bi} = 0,
\end{equation}
\begin{equation}
\label{gblq-C90}
 \frac{(\Delta u)_{i\bi}}{a + \Delta u}
-  \frac{|(\Delta u)_{i}|^2}{(a + \Delta u)^2}
  + \phi_{i\bi} \leq 0.
\end{equation}
Write
\[ (\Delta u)_{i} = u_{i\bj j} + (\chi_{i\bj})_j + \lambda_i \]
where, by \eqref{gblq-C30}, \eqref{gblq-G110}, \eqref{coord} and
\eqref{gblq-C270},
\[ \lambda_i = - (\chi_{i\bj})_j -  g_{l\bk i} u_{k\bl}
             =  - \nabla_j \chi_{i\bj} +  T_{ij}^l \chi_{l\bj}
               - u_j D_u \chi_{i\bj}. \]
We have by \eqref{gblq-C80},
\begin{equation}
\label{gblq-C130}
\begin{aligned}
|(a + \Delta u)_{i}|^2 \,
= \, & |u_{i\bj j} + (\chi_{i\bj})_j|^2
+ 2 \fRe \{(\Delta u)_{i} \bar{\lambda_{i}}\} - |\lambda_i|^2 \\
= |u_{i\bj j} \, &  + (\chi_{i\bj})_j|^2
  - 2 (a + \Delta u) \fRe\{\phi_{i} \bar{\lambda_{i}}\} - |\lambda_i|^2.
 \end{aligned}
\end{equation}
By Cauchy-Schwarz inequality,
\begin{equation}
\label{gblq-C140}
  \begin{aligned}
 \fg^{i\bi} |u_{i\bj j} + (\chi_{i\bj})_j|^2
   = & \, \fg^{i\bi} %\Big|\sum_j
     |\fg_{j\bj}^{1/2} \; \fg_{j\bj}^{- 1/2} \;
      (u_{i\bj j} + (\chi_{i\bj})_j)| \\%\Big|^2 \\
\leq   & \; (\mbox{tr} \chi + \Delta u)  \fg^{i\bi} \fg^{j\bj}
      |u_{i\bj j} + (\chi_{i\bj})_j|^2.
 \end{aligned}
\end{equation}
From \eqref{gblq-C90}, \eqref{gblq-C120}, \eqref{gblq-C130} and
\eqref{gblq-C140} we derive
\begin{equation}
\label{gblq-C150}
  \begin{aligned}
 (a + \Delta u) \fg^{i\bi} \phi_{i\bi}
      \,& +  2 \fg^{i\bi} \fRe\{\phi_{i} \bar{\lambda_{i}}\} \\
      %+ \fg^{i\bi} |\lambda_{i}|^2 \\
 \leq \,& - c_1 (\mbox{tr} \chi + \Delta u) \sum \fg^{i\bi}
           - \Delta (f) \\
        &  + n^2 \inf_{j,k} R_{j\bj k\bk}
         + (c_2+ c_3 |\nabla u|^2) \sum \fg^{i\bi}.
 \end{aligned}
\end{equation}

By \eqref{gblq-G70} and using the following inequality as in Yau~\cite{Yau78}
\begin{equation}
\label{gblq-C190}
 \Big(\sum \fg^{i\bi}\Big)^{n-1}
\geq \frac{\sum \fg_{i\bi}}{\fg_{1\bar{1}} \cdots \fg_{n\bar{n}}}
   = \frac{\mbox{tr} \chi + \Delta u}{\det (\chi_{i\bj} + u_{i\bj})}
   = \frac{\mbox{tr} \chi + \Delta u}{\psi},
  \end{equation}
we see that
\begin{equation}
\label{gblq-C150'}
  \begin{aligned}
\phi^{-1} \fg^{i\bi} \phi_{i\bi} \geq
   \frac{\epsilon}{2 \psi^{\frac{1}{n-1}}}
    (\mbox{tr} \chi + \Delta u)^{\frac{1}{n-1}} - 1
  + \frac{\epsilon}{2} \sum \fg^{i\bi}
  + \fg^{i\bi} u_i u_{\bi}.
 \end{aligned}
\end{equation}
Note also that
\[ 2 \fg^{i\bi} \fRe\{\phi_{i} \bar{\lambda_{i}}\}
= 2 \phi \fg^{i\bi} \fRe\{u_{i} \bar{\lambda_{i}}\}
\geq - \phi \fg^{i\bi} (u_i u_{\bi} + |\lambda_i|^2). \]
Consequently, when $A$ is chosen so that $\epsilon \phi \geq 2 (1 - c_1)$
we have from \eqref{gblq-C150}, \eqref{gblq-C150'} and \eqref{gblq-C190},
\[ (\mbox{tr} \chi + \Delta u)^{\frac{1}{n-1}}
      \leq C (1 + |\psi^{\frac{1}{n-1}}|_{C^2}) \]
or
\[ a + \Delta u \leq A |\lambda|^2 + c_2+ c_3 |\nabla u|^2. \]
The proof is complete.
\end{proof}

Finally, to remove assumption~\eqref{gblq-G1} we need assume $\chi (\cdot, u)$ to be
nondecreasing in $u$. In this case $\chi (\cdot, u) - \chi (\cdot, \ul{u}) \geq 0$.
We may therefore replace $u$ by $v = u - \ul{u}$ and $\chi (\cdot, u)$ by
\[ \chi' (\cdot, v) \equiv
    \chi (\cdot, v + \ul{u}) + \frac{\sqrt{-1}}{2} \partial \bpartial \ul{u}. \]
Note that $\chi' (\cdot, v) \geq \chi_{\ul{u}} (\cdot, \ul{u}) > 0$
and $\chi'_v (\cdot, v) = \chi_u (\cdot, u)$.
Propositions~\ref{gblq-prop-G10} and \ref{gblq-prop-C10} thus apply to $v$.

\bigskip

\section{Boundary estimates for second derivatives}
\label{gblq-B}
\setcounter{equation}{0}
\medskip

In this section we derive
{\em a priori} estimates for second derivatives (the {\em real} Hessian) on the boundary
\begin{equation}
 \label{cma-37}
\max_{\partial M} |\nabla^2 u| \leq C.
\end{equation}
%where $C$ will depend on $|u|_{C^1 (M)}|$, $|\ul{u}|_{C^2 (M)}|$
We shall follow techniques developed in \cite{GS93},
\cite{Guan98a}, \cite{Guan98b} using subsolutions.

We begin with a brief review of formulas for covariant
derivatives which we shall use in this and following sections.
%Section~\ref{gblq-R}.

In local coordinates $z = (z_1, \ldots, z_n)$,
$z_j = x_j + \sqrt{-1} y_j$, we use notations such as
\[ v_i = \nabla_{\frac{\partial}{\partial z_i}} v, \;
   v_{ij} = \nabla_{\frac{\partial}{\partial z_j}}
             \nabla_{\frac{\partial}{\partial z_i}} v, \;
   v_{x_i} = \nabla_{\frac{\partial}{\partial x_i}} v, \; \mbox{etc.} \]
Recall that
\begin{equation}
\label{gblq-B105}
 \begin{aligned}
& v_{i\bj} - v_{\bj i}  = 0, \;\;
  v_{ij} - v_{ji}  = T_{ij}^l v_l.
\end{aligned}
\end{equation}
By straightforward calculations,
\begin{equation}
\label{gblq-B145}
\left\{
\begin{aligned}
v_{i \bj \bk} - v_{i \bk \bj} = \, & \ol{T_{jk}^l} v_{i\bl},  \\
 v_{i \bj k} - v_{i k \bj} = \, & - g^{l\bm} R_{k \bj i \bm} v_l, \\
 v_{i j k} - v_{i k j} = \, & g^{l\bm} R_{jki\bm} v_l + T_{jk}^l v_{il}.
\end{aligned}
 \right.
\end{equation}
Therefore,
\begin{equation}
\label{gblq-B150}
 \begin{aligned}
 v_{i \bj k} - v_{k i \bj}
  = \,& (v_{i \bj k} - v_{i k \bj}) + (v_{i k \bj} - v_{k i \bj}) \\
  = \,& - g^{l\bm} R_{k \bj i \bm} v_l + T_{ik}^l v_{l\bj}
         + \nabla_{\bj} T_{ik}^l v_l \\
  = \,& - g^{l\bm} R_{i \bj k \bm} v_l + T_{ik}^l v_{l\bj}
\end{aligned}
\end{equation}
by \eqref{cma-K115},
and
\begin{equation}
\label{gblq-B155}
 \begin{aligned}
 v_{ijk} - v_{kij}
   =  \,& (v_{ijk} - v_{ikj}) + (v_{ikj} - v_{kij}) \\
   =  \,&  g^{l\bm} R_{jk i \bm} v_l + T_{jk}^l v_{il}
          + T_{ik}^l  v_{l j}  + \nabla_{j} T_{ik}^l v_l.
\end{aligned}
\end{equation}

Since
\[ \frac{\partial}{\partial x_k}
     = \frac{\partial}{\partial z_k} + \frac{\partial}{\partial \bz_k}, \;\;
   \frac{\partial}{\partial y_k}
 = \sqrt{-1} \Big(\frac{\partial}{\partial z_k}
  - \frac{\partial}{\partial \bz_k}\Big), \]
we see that
\begin{equation}
\label{gblq-B110}
\left\{
\begin{aligned}
v_{z_i x_j} -  v_{x_j z_i} = \,&  v_{ij} - v_{ji}  = T_{ij}^l v_l,  \\
v_{z_i y_j} -  v_{y_j z_i} = \,& \sqrt{-1} (v_{ij} - v_{ji})
                           = \sqrt{-1} T_{ij}^l v_l,
\end{aligned} \right.
\end{equation}
\begin{equation}
\label{gblq-B120}
%\left\{
\begin{aligned}
v_{z_i \bz_j x_k} - v_{x_k z_i \bz_j}
     = \,&  (v_{i\bj k} + v_{i\bj \bk}) - (v_{ki\bj} + v_{\bk i \bj}) \\
     = \,&  (v_{i\bj k} - v_{ki\bj}) + (v_{i\bj \bk} - v_{i \bk \bj}) \\
    = \,& - g^{l\bm} R_{i \bj k \bm} v_l + T_{ik}^l v_{l\bj}
          + \ol{T_{jk}^l} v_{i\bl}.
  \end{aligned} %\right.
\end{equation}
Similarly,
\begin{equation}
\label{gblq-B130}
\begin{aligned}
v_{z_i z_{\bj} y_k} -  v_{y_k z_i z_{\bj}}
    = \,& \sqrt{-1} ((v_{i\bj k} - v_{i\bj \bk})
                      - (v_{ki\bj} -  v_{\bk i \bj})) \\
    = \,& \sqrt{-1} ((v_{i\bj k} - v_{ki\bj})
                      - (v_{i\bj \bk} - v_{i \bk \bj})) \\
    = \,&  \sqrt{-1} (- g^{l\bm} R_{i \bj k \bm} v_l + T_{ik}^l v_{l\bj}
                       - \ol{T_{jk}^l} v_{i\bl}).
\end{aligned} %\right.
\end{equation}

For convenience we set
\[ %\begin{aligned}
t_{2k-1} = x_k,\;  t_{2k} = y_k, \, 1 \leq k \leq n-1; \;
%& t_1 = x_1, \; t_2 = y_1, \; \ldots,
%\; t_{2n-3} = x_{n-1}, \; t_{2n-2} = y_{n-1}, \\
  t_{2n-1} = y_n, \; t_{2n} = x_n. \]
%\end{aligned} \]
%and
%\[ t' = (t_1, \ldots, t_{2n-1}). \]
By \eqref{gblq-B120}, \eqref{gblq-B130} and the identity
 \begin{equation}
\label{gblq-B170'}
 \fg^{i\bj} T^l_{ki} u_{l\bj} = T^i_{ki} - \fg^{i\bj} T^l_{ki} \chi_{l\bj}
 \end{equation}
we obtain for all $1 \leq \alpha \leq 2n$,
\begin{equation}
\label{gblq-B50}
\begin{aligned}
|\fg^{i\bj}  u_{t_{\alpha} i\bj}|
  \leq \, & |(f)_{t_{\alpha}}| + |\fg^{i\bj} (\chi_{i\bj})_{t_{\alpha}}|
   + |\fg^{i\bj} (u_{t_{\alpha} i\bj} - u_{i\bj t_{\alpha}})|
 \leq  C (1 + \fg^{i\bj} g_{i\bj}).
\end{aligned}
\end{equation}
We also record here the following identity which we shall use later:
for a function $\eta$,
\begin{equation}
\label{gblq-B190}
 \begin{aligned}
  \fg^{i\bj} \eta_i u_{x_n \bj}
   = &\, \fg^{i\bj} \eta_i (2 u_{n \bj} + \sqrt{-1} u_{y_n \bj}) \\
   = &\, 2 \eta_n - 2 \fg^{i\bj} \eta_i \chi_{n \bj}
          + \sqrt{-1} \fg^{i\bj}  \eta_i u_{y_n \bj}.
 \end{aligned}
 \end{equation}

We now start to derive \eqref{cma-37}.
We assume
\begin{equation}
\label{cma-38}
|u| + | \nabla u| \leq K \;\; \mbox{in $\bar{M}$}.
\end{equation}
Set
\begin{equation}
\label{cma-39}
 \ul{\psi} \equiv
       \min_{|u| \leq K, \, z \in \bar{M}} \psi (z, u) > 0, \;\;
   \bar{\psi} \equiv
       \max_{|u| \leq K, \, z \in \bar{M}} \psi (z, u).
\end{equation}

\vspace{.2in}
%{\bf (a)}
Let $\sigma$ be the distance function to $\partial M$. Note that
$|\nabla \sigma| = \frac{1}{2}$ on $\partial M$.
 There exists
$\delta_0 > 0$ such that $\sigma$ is smooth and $\nabla \sigma \neq 0$
in
\[ M_{\delta_0} := \{z \in M: \sigma (z) < \delta_0\}, \]
which we call the $\delta_0$-neighborhood of $\partial M$.
We can therefore write
\begin{equation}
\label{gblq-B10}
 u - \ul{u} = h \sigma, \;\; \mbox{in $M_{\delta_0}$}
\end{equation}
where $h$ is a smooth function.

Consider a boundary point $p \in \partial M$. We choose local
coordinates $z = (z_1, \ldots, z_n)$, $z_j = x_j + i y_j$,
around $p$ in a neighborhood which we assume to be contained in
$M_{\delta_0}$ such
$\frac{\partial}{\partial x_n}$ is the interior normal direction to
$\partial M$ at $p$ where we also assume $g_{i\bj} = \delta_{ij}$;
(Here and in what follows we identify $p$ with $z = 0$.) for later
reference we call such local coordinates {\em regular} coordinate
charts.

By \eqref{gblq-B10} we have
\[  (u - \ul{u})_{x_n} = h_{x_n} \sigma + h \sigma_{x_n} \]
and
\[ (u - \ul{u})_{j\bk} =h_{j\bk} \sigma + h \sigma_{j\bk}
                      + 2 \, \fRe \{h_j \sigma_{\bk}\}. \]
Since $\sigma = 0$ on $\partial M$ and
$\sigma_{x_n} (0) = 2 |\nabla \sigma| = 1$, we see that
\[ (u - \ul{u})_{x_n} (0) = h (0) \]
and
\begin{equation}
\label{cma-60'}
(u-\ul{u})_{j\bk}(0) = (u-\ul{u})_{x_n} (0) \sigma_{j\bk}(0)
\;\; j, k < n.
\end{equation}
Similarly,
\begin{equation}
\label{cma-60}
(u-\ul{u})_{t_{\alpha}t_{\beta}} (0) = - (u-\ul{u})_{x_n} (0)
 \sigma_{t_{\alpha} t_{\beta}},
\;\; \alpha, \beta < 2n.
\end{equation}
It follows that
\begin{equation}
|u_{t_{\alpha} t_{\beta}}(0)| \leq C, \;\;\;\; \alpha, \beta < 2n
\label{cma-70}
\end{equation}
where $C$ depends on $|u|_{C^1 (\bM)}$, $|\ul{u}|_{C^1 (\bM)}$, and
the principal curvatures of $\partial M$.

\vspace{.2in}
%{\bf (b)}
To estimate $u_{t_{\alpha} x_n} (0)$ for $\alpha \leq 2n$,
we will follow \cite{Guan98b} and employ a barrier function of the form
\begin{equation}
\label{cma-E85}
v = (u - \ul{u}) + t \sigma - N \sigma^2,
\end{equation}
where $t, N$ are positive constants to be determined.
Recall that $\ul{u} \in C^2$ and $\chi_{\ul{u}} > 0$ in a neighborhood of
$\partial M$. We may assume that there exists $\epsilon > 0$ such that
$\chi_{\ul{u}} \geq \epsilon \omega$ in $M_{\delta_0}$. Locally, this gives
\begin{equation}
\label{cma-E86}
\{\ul{u}_{j\bk} + \chi_{j\bk} (\cdot, \ul{u})\} \geq \epsilon \{g_{j\bk}\}.
\end{equation}
The following is the key ingredient in our argument.

\begin{lemma}
\label{cma-lemma-20}
For $N$ sufficiently large and $t, \delta$ sufficiently small,
\[ \begin{aligned}
  \fg^{i\bj} v_{i\bj}
    \leq - \frac{\epsilon}{4} &\, (1 + \sum \fg^{i\bj} g_{i\bj})
   \;\;\; \mbox{in} \;\; \Omega_{\delta}, \\
  v & \, \geq 0 \;\; \mbox{on} \;\; \partial \Omega_{\delta}
  \end{aligned} \]
where $\Omega_{\delta} = M \cap B_{\delta}$ and $B_{\delta}$ is the
(geodesic) ball of radius $\delta$ centered at $p$.
\end{lemma}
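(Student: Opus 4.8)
The plan is to treat the two assertions separately, handling first the boundary inequality $v \geq 0$ on $\partial\Omega_\delta$, then the interior differential inequality $\fg^{i\bj} v_{i\bj} \leq -\frac{\epsilon}{4}(1 + \sum \fg^{i\bj} g_{i\bj})$.

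For the boundary statement, note that $\partial\Omega_\delta$ splits into two pieces: the part $\partial M \cap B_\delta$ lying on $\partial M$, and the part $M \cap \partial B_\delta$ lying on the geodesic sphere. On the first piece, $\sigma = 0$ and $u - \ul u = 0$ by the boundary condition, so $v = 0$ there. On the second piece, $\sigma \leq \delta$, so $-N\sigma^2 + t\sigma \geq t\sigma - N\delta \sigma \geq 0$ once $t \geq N\delta$; meanwhile $u - \ul u$ vanishes on $\partial M$ and is Lipschitz, so $|u - \ul u| \leq C\sigma$ near $\partial M$. Thus on $M \cap \partial B_\delta$ we have $v \geq -C\sigma + (t - N\delta)\sigma$, which is $\geq 0$ provided $t \geq C + N\delta$. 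So the boundary condition forces a lower bound on $t$ in terms of $N$ and $\delta$ — this is compatible with "$t$ small" only after $N$ is fixed and $\delta$ is taken small, which is exactly the quantifier order in the statement.

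For the interior inequality I would compute $\fg^{i\bj} v_{i\bj}$ term by term. Writing $v = (u - \ul u) + t\sigma - N\sigma^2$ and using $\fg^{i\bj}(u_{i\bj} + \chi_{i\bj}(\cdot,u)) = n$ (from the equation, since $\fg_{i\bj} = u_{i\bj} + \chi_{i\bj}$), we get $\fg^{i\bj} u_{i\bj} = n - \fg^{i\bj}\chi_{i\bj}(\cdot, u)$. The key contribution is the $-N\sigma^2$ term: since $(\sigma^2)_{i\bj} = 2\sigma_i\sigma_{\bj} + 2\sigma\sigma_{i\bj}$, we have $-N\fg^{i\bj}(\sigma^2)_{i\bj} = -2N\fg^{i\bj}\sigma_i\sigma_{\bj} - 2N\sigma\fg^{i\bj}\sigma_{i\bj}$, and the first piece $-2N\fg^{i\bj}\sigma_i\sigma_{\bj}$ is a large negative multiple of a positive quantity that (using $|\nabla\sigma| = \frac12$ near $\partial M$ and the positive-definiteness of $\fg^{i\bj}$) dominates $\sum\fg^{i\bj}g_{i\bj}$ after a standard linear-algebra estimate — this is where the factor $N$ large is used. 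The terms $\fg^{i\bj}(u-\ul u)_{i\bj}$, $t\fg^{i\bj}\sigma_{i\bj}$, and the leftover $-2N\sigma\fg^{i\bj}\sigma_{i\bj}$ are all bounded by $C(1 + \sum\fg^{i\bj}g_{i\bj})$ with $C$ independent of $N$; for the $(u - \ul u)_{i\bj}$ term I would use $\ul u_{i\bj} + \chi_{i\bj}(\cdot, \ul u) \geq \epsilon g_{i\bj}$ from \eqref{cma-E86} to extract a favorable term $-\epsilon\fg^{i\bj}g_{i\bj}$, since $\fg^{i\bj}u_{i\bj} = \fg^{i\bj}(u - \ul u)_{i\bj} + \fg^{i\bj}\ul u_{i\bj}$ and $\fg^{i\bj}(\chi_{i\bj}(\cdot,u) - \chi_{i\bj}(\cdot,\ul u)) \geq 0$ by monotonicity. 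Collecting everything, after first choosing $N$ large (to beat the $C$ from the $O(1)$ terms by roughly $\frac{\epsilon}{2}$ on each of the two summands) and then $\delta$ small (to confine $\Omega_\delta$ to $M_{\delta_0}$ and to control the $-2N\sigma\fg^{i\bj}\sigma_{i\bj}$ cross-term), and $t$ small relative to $N$, we land at the claimed bound.

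The main obstacle is the simultaneous bookkeeping of the $N$-dependence: the term $-2N\fg^{i\bj}\sigma_i\sigma_{\bj}$ must be shown to dominate $\frac{\epsilon}{4}(1 + \sum\fg^{i\bj}g_{i\bj})$ \emph{plus} all the $O(1)$ error terms whose constants are $N$-independent, while at the same time the cross-term $2N\sigma\fg^{i\bj}\sigma_{i\bj}$, which is $N$-\emph{dependent} but carries a factor $\sigma \leq \delta$, must be absorbed — so $\delta$ has to be chosen small \emph{after} $N$ is fixed. The linear-algebra step bounding $\fg^{i\bj}\sigma_i\sigma_{\bj}$ below by a multiple of $\sum\fg^{i\bj}g_{i\bj}$ requires knowing $\sigma_i\sigma_{\bj}$ is, up to a bounded factor, comparable to the metric in the normal direction; this is where $|\nabla\sigma| = \frac12$ near $\partial M$ enters, together with the fact that $\{\fg^{i\bj}\}$ is positive definite so that in an orthonormal frame diagonalizing $\fg$ one simply has $\fg^{i\bj}\sigma_i\sigma_{\bj} = \sum_i \fg^{i\bar i}|\sigma_i|^2$ and $\max_i|\sigma_i|^2$ is bounded below by a positive constant. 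I expect the rest to be routine once this quantifier order and the monotonicity trick with $v = u - \ul u$ are in place.
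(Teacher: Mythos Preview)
There are two genuine gaps.

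\textbf{The boundary estimate forces $t$ bounded below.} Your Lipschitz bound gives $u-\ul u \geq -C\sigma$, so on $M\cap\partial B_\delta$ you need $t \geq C + N\delta$. Even after fixing $N$ and sending $\delta\to 0$, this forces $t\geq C$ with $C$ a fixed positive constant. But the interior inequality needs $C_1 t \leq \epsilon/4$, so $t$ must be small; these are incompatible. The fix is simple and is what the paper uses: since $\ul u$ is a subsolution and $u$ a solution, the comparison principle gives $u\geq\ul u$ on $\bar M$, so $u-\ul u\geq 0$ and only $t\geq N\delta$ is needed.

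\textbf{The ``standard linear-algebra estimate'' is false.} You claim $\fg^{i\bj}\sigma_i\sigma_{\bj}$ is bounded below by a constant times $\sum\fg^{i\bj}g_{i\bj}$. In a frame diagonalizing $\fg$ with eigenvalues $\lambda_1\leq\cdots\leq\lambda_n$, one has $\fg^{i\bj}\sigma_i\sigma_{\bj}=\sum_i\lambda_i^{-1}|\sigma_i|^2$ and $\sum\fg^{i\bj}g_{i\bj}=\sum_i\lambda_i^{-1}$. Knowing only $\max_i|\sigma_i|^2\geq c_0$ gives $\fg^{i\bj}\sigma_i\sigma_{\bj}\geq c_0\lambda_n^{-1}$, i.e.\ control only by the \emph{smallest} of the $\lambda_i^{-1}$, not by their sum. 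When the eigenvalues are spread (e.g.\ $\lambda_1$ tiny, $\lambda_n$ moderate, which is exactly what happens as one approaches the boundary before second-derivative bounds are known), $\lambda_n^{-1}$ is vastly smaller than $\sum\lambda_i^{-1}$, and no choice of $N$ fixes this.

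The paper's argument does \emph{not} try to compare these two quantities directly. Instead it keeps part of the favorable $-\epsilon\sum\lambda_i^{-1}$ coming from the subsolution, uses only $\fg^{i\bj}\sigma_i\sigma_{\bj}\geq\tfrac{1}{2}\lambda_n^{-1}$, and then applies the AM--GM inequality
\[
\frac{\epsilon}{4}\sum_i\lambda_i^{-1}+\frac{N}{\lambda_n}
\;\geq\; \frac{n\epsilon}{4}\Big(\frac{N}{\lambda_1\cdots\lambda_n}\Big)^{1/n}
\;=\;\frac{n\epsilon}{4}\,\frac{N^{1/n}}{\psi^{1/n}}\;\geq\; c_1 N^{1/n},
\]
using the equation $\prod_i\lambda_i=\psi\leq\bar\psi$. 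This is where the PDE enters: the upper bound on $\psi$ is essential, and the $-2N\fg^{i\bj}\sigma_i\sigma_{\bj}$ term is used to beat the constant $n$ (and the $1$ on the right-hand side), not to beat $\sum\fg^{i\bj}g_{i\bj}$ directly. Your outline misses this mechanism.
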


\begin{proof}
This lemma was first proved in \cite{Guan98b} for domains in $\bfC^n$.
For completeness we include the proof here with minor modifications.
By \eqref{cma-E86} we have
\begin{equation}
\label{cma-E90}
\fg^{i\bj} (u_{i\bj} -\ul{u}_{i\bj})
   \leq \fg^{i\bj} (u_{i\bj} + \chi_{i\bj} (\cdot, u)
      - \ul{u}_{i\bj} - \chi_{i\bj} (\cdot, \ul{u}))
 \leq n - \epsilon \fg^{i\bj} g_{i\bj}.
\end{equation}
Obviously,
\[ \fg^{i\bj} \sigma_{i\bj} \leq C_1 \fg^{i\bj}  g_{i\bj} \]
for some constant $C_1 > 0$ under control.
Thus
\[  \fg^{i\bj} v_{i\bj} \leq n
    + \{C_1 (t + N \sigma) - \epsilon\} \fg^{i\bj} g_{i\bj}
- 2 N \fg^{i\bj} \sigma_i \sigma_{\bj} \;\;\; \mbox{in} \;\;
\Omega_{\delta}.\]
%It is easy to see that
%\[ \mathcal{L} d \geq - C_2 \Big(1 + \sum u^{k \bk}\Big). \]

Let $\lambda_1 \leq \cdots \leq \lambda_n$ be the eigenvalues
of $\{u_{i\bj} + \chi_{i\bj}\}$ (with respect to $\{g_{i\bj}\}$).
We have $\fg^{i\bj}  g_{i\bj} = \sum \lambda_k^{-1}$
and
\begin{equation}
\label{cma-E95}
\fg^{i\bj} \sigma_i \sigma_{\bj} \geq  \frac{1}{2 \lambda_n}
\end{equation}
since $|\nabla \sigma| \equiv \frac{1}{2}$ where $\sigma$ is smooth.
By the arithmetic-geometric mean-value inequality,
\[ \frac{\epsilon}{4} \fg^{i\bj}  g_{i\bj} + \frac{N}{\lambda_n}
  \geq \frac{n \epsilon}{4}
       (N \lambda_1^{-1} \cdots \lambda_n^{-1})^{\frac{1}{n}}
  \geq \frac{n \epsilon  N^{\frac{1}{n}}}{4 \psi^{\frac{1}{n}}}
  \geq c_1 N^{\frac{1}{n}} \]
for some constant $c_1 > 0$ depending on the upper bound of $\psi$.

We now fix $t > 0$ sufficiently small and $N$ large so that
 $c_1 N^{1/n} \geq 1 + n + \epsilon$ and $C_1 t \leq \frac{\epsilon}{4}$.
Consequently,
\[  \fg^{i\bj} v_{i\bj}
    \leq - \frac{\epsilon}{4} (1 + \fg^{i\bj} g_{i\bj})
   \;\;\; \mbox{in} \;\; \Omega_{\delta} \]
if we require $\delta$ to satisfy
$C_1 N \delta \leq \frac{\epsilon}{4}$ in $\Omega_{\delta}$.

On $\partial M \cap B_{\delta}$ we have $v = 0$.
On $M \cap \partial B_{\delta}$,
\[ v \geq t \sigma - N \sigma^2
     \geq  (t - N \delta) \sigma \geq 0 \]
if we require, in addition, $N \delta \leq t$.
\end{proof}

\begin{remark}
For the real Monge-Amp\`ere equations,
Lemma~\ref{cma-lemma-20} was proved in \cite{Guan98a} both for domains
in $\bfR^n$ and in general Riemannian manifolds, improving earlier results
in \cite{HRS}, \cite{GS93} and \cite{GL96}.
\end{remark}

\begin{lemma}
\label{cma-lemma-30}
Let $w \in C^2 (\ol{\Omega_{\delta}})$.
Suppose that $w$ satisfies
\[ \fg^{i\bj} w_{i\bj} \geq - C_1 (1 + \fg^{i\bj} g_{i\bj}) \;\;
\mbox{in $\Omega_{\delta}$} \]
and
\[ w \leq C_0 \rho^2 \;\; \mbox{on $ B_{\delta} \cap \partial M$},
   \;\; w (0) = 0\]
where $\rho$ is the distance function to the point $p$ (where $z = 0$) on
$\partial M$. Then
%(\ref{gs5-I20})-(\ref{gs5-I35}) and (\ref{gs5-I45}),
$ w_{\nu} (0) \leq C$,
where $\nu$ is the interior unit normal to $\partial M$, and
$C$ depends on $\epsilon^{-1}$, $C_0$, $C_1$,
$|w|_{C^0 (\ol{\Omega_{\delta}})}$,
$|u|_{C^1 (\bar{M})}$ and the constants $N$, $t$ and $\delta$ determined
in Lemma~\ref{cma-lemma-20}.
\end{lemma}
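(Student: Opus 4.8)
The plan is to run a barrier argument in the style of \cite{Guan98b}, using the supersolution $v$ produced in Lemma~\ref{cma-lemma-20} as the principal building block. Concretely, I would work in a regular coordinate chart covering $\ol{\Omega_{\delta}}$ and set
\[ \Phi = A v + B |z|^2 - w \quad \mbox{in } \Omega_{\delta}, \]
where $A, B > 0$ are large constants to be chosen. The goal is to show $\Phi \geq 0$ on $\ol{\Omega_{\delta}}$ while $\Phi (p) = 0$; since $p \in \partial M$ and the unit normal $\nu$ points into $\Omega_{\delta}$, this forces $\Phi_{\nu} (0) \geq 0$. The estimate $w_{\nu} (0) \leq C$ then follows at once, because $v$ and $|z|^2$ are fixed functions whose first derivatives at $p$ are under control: $(|z|^2)_{\nu} (0) = 0$ in the chosen coordinates, and $v_{\nu} (0)$ is bounded in terms of $|u|_{C^1 (\bM)}$, $|\ul{u}|_{C^1 (\bM)}$, $t$ and the principal curvatures of $\partial M$ (recall $v = (u - \ul{u}) + t \sigma - N \sigma^2$). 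Finally $\Phi (p) = A v (p) = 0$ since $u = \ul{u}$ and $\sigma = 0$ on $\partial M$ and $w (p) = 0$.

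To get $\Phi \geq 0$ I would invoke the weak maximum principle for the linear elliptic operator $L w := \fg^{i\bj} w_{i\bj}$, which is genuinely elliptic because $u$ is admissible so $\{\fg^{i\bj}\} > 0$. Differentiating, $\fg^{i\bj} \Phi_{i\bj} = A \fg^{i\bj} v_{i\bj} + B \sum \fg^{i\bi} - \fg^{i\bj} w_{i\bj}$; using Lemma~\ref{cma-lemma-20}, the hypothesis $\fg^{i\bj} w_{i\bj} \geq - C_1 (1 + \fg^{i\bj} g_{i\bj})$, and the fact that near $p$ the quantity $\sum \fg^{i\bi}$ is comparable to $\fg^{i\bj} g_{i\bj}$, one finds $\fg^{i\bj} \Phi_{i\bj} \leq \big( - \tfrac{A \epsilon}{4} + C B + C_1 \big)(1 + \fg^{i\bj} g_{i\bj})$ for a controlled constant $C$. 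After $B$ is fixed by the boundary conditions below, choosing $A \geq \tfrac{4}{\epsilon}(C B + C_1)$ makes $\fg^{i\bj} \Phi_{i\bj} \leq 0$ throughout $\Omega_{\delta}$, so $\Phi$ attains its minimum over $\ol{\Omega_{\delta}}$ on $\partial \Omega_{\delta} = (\partial M \cap \ol{B_{\delta}}) \cup (M \cap \partial B_{\delta})$.

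It then remains to choose $B$ so that $\Phi \geq 0$ on $\partial \Omega_{\delta}$. On $\partial M \cap \ol{B_{\delta}}$ one has $v \equiv 0$ and, by hypothesis, $w \leq C_0 \rho^2 \leq C_0' |z|^2$, since $\rho$ and $|z|$ are comparable on $\partial M$ near $p$; hence $\Phi \geq (B - C_0')|z|^2 \geq 0$ once $B \geq C_0'$. On $M \cap \partial B_{\delta}$ one has $v \geq 0$ by Lemma~\ref{cma-lemma-20}, $|z|^2 \geq c \delta^2$ for a fixed $c > 0$, and $w \leq |w|_{C^0 (\ol{\Omega_{\delta}})}$, so $\Phi \geq c B \delta^2 - |w|_{C^0 (\ol{\Omega_{\delta}})} \geq 0$ once $B \geq |w|_{C^0 (\ol{\Omega_{\delta}})}/(c \delta^2)$. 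Taking $B$ to be the larger of these two bounds (and shrinking $\delta$ if necessary so that the regular chart covers $\ol{\Omega_{\delta}}$ and the distance functions involved are mutually comparable there) completes the construction.

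The argument is essentially routine once Lemma~\ref{cma-lemma-20} is in hand; the only point needing care is the bookkeeping of constants — $B$ must be selected before $A$, and both must in the end depend only on $\epsilon^{-1}$, $C_0$, $C_1$, $|w|_{C^0 (\ol{\Omega_{\delta}})}$, $|u|_{C^1 (\bar{M})}$, the constants $N, t, \delta$ of Lemma~\ref{cma-lemma-20}, and the fixed geometry of $\partial M$, as claimed. The single substantive input, the strict supersolution inequality $\fg^{i\bj} v_{i\bj} \leq - \tfrac{\epsilon}{4}(1 + \fg^{i\bj} g_{i\bj})$, has already been established, so no further obstacle remains.
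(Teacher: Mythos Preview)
Your proposal is correct and follows essentially the same barrier argument as the paper: the paper takes $A v + B \rho^2 - w$, shows it has nonpositive $\fg^{i\bj}$-Laplacian and nonnegative boundary values for $A \gg B$ both large, and concludes via the maximum principle that $A v_{\nu}(0) - w_{\nu}(0) \geq 0$. The only cosmetic difference is that you use $|z|^2$ in place of $\rho^2$, which is harmless since the two are comparable near $p$; your write-up is in fact more detailed than the paper's, which compresses the boundary check and the choice of constants into a single sentence.
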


\begin{proof}
By Lemma~\ref{cma-lemma-20},
$A v + B \rho^2 - w \geq 0$ on $\partial \Omega_{\delta}$
and
\[ \fg^{i\bj} (A v + B \rho^2 - w)_{i\bj} \leq 0 \;\;
     \mbox{in $\Omega_{\delta}$} \]
when $A \gg B$ and both are sufficiently large. By the maximum principle,
\[ A v + B \rho^2 - w \geq 0 \;\; \mbox{in $\ol{\Omega_{\delta}}$}. \]
Consequently,
\[ A v_{\nu} (0) - w_{\nu} (0) = D_{\nu} (A v + B \rho^2 - w) (0) \geq 0 \]
since $A v + B \rho^2 - w = 0$ at the origin.
\end{proof}

\vspace{.2in}
%{\bf (c)}
We next apply Lemma~\ref{cma-lemma-30} to estimate
$u_{t_{\alpha} x_n} (0)$ for $\alpha < 2n$, following \cite{CKNS}.
For fixed $\alpha < 2n$,
we write $\eta = \sigma_{t_{\alpha}}/\sigma_{x_n}$ and
define
\[ \mathcal{T} = \nabla_{\frac{\partial}{\partial t_{\alpha}}}
     % \frac{\sigma_{t_{\alpha}}}{\sigma_{x_n}}
     - \eta \nabla_{\frac{\partial}{\partial x_n}}. \]
We wish to apply Lemma~\ref{cma-lemma-30}  to
\[ w = (u_{y_n} - \varphi_{y_n})^2 \pm  \cT (u - \varphi). \]

By (\ref{cma-38}),
\[ | \mathcal{T} (u - \varphi)| + (u_{y_n} - \varphi_{y_n})^2 \leq C
    \;\; \mbox{in} \; \Omega_{\delta}. \]
On $\partial M$ since $u - \varphi = 0$ and
$\cT$ is a tangential differential operator, we have
\[  \mathcal{T} (u - \varphi) = 0 \;\; \mbox{on}\;\partial M \cap B_{\delta} \]
and, similarly,
\begin{equation}
\label{cma-175}
(u_{y_n} - \varphi_{y_n})^2 \leq C \rho^2
  \;\; \mbox{on} \; \partial M \cap B_{\delta}.
\end{equation}
%\[ (u - \varphi)_{y_n} = - (u - \varphi)_{x_n} \rho_{y_n},
%\;\; \mbox{on} \; \partial \Omega \cap B_{\delta} (0).\]

We compute next
\begin{equation}
\label{gblq-B160}
\fg^{i\bj}  (\mathcal{T} u)_{i\bj}
   = \fg^{i\bj}  (u_{t_\alpha i\bj} + \eta u_{x_n i\bj})
 + \fg^{i\bj} \eta_{i\bj} u_{x_n} + 2 \fg^{i\bj}  \fRe \{\eta_i u_{x_n \bj}\}.
\end{equation}

By \eqref{gblq-B50} and \eqref{gblq-B190},
\begin{equation}
\label{gblq-B180}
 |\fg^{i\bj}  (u_{t_\alpha i\bj} + \eta u_{x_n i\bj})|
   \leq | \mathcal{T} (f) |
   + C_1 (1 + \fg^{i\bj}  g_{i\bj})
\end{equation}
and
\begin{equation}
\label{gblq-B200}
 %\begin{aligned}
2 |\fg^{i\bj} \fRe\{\eta_i u_{x_n \bj}\}| \leq \fg^{i\bj} u_{y_n i} u_{y_n \bj}
+ C_2 (1 + \fg^{i\bj}  g_{i\bj}).
\end{equation}
%where $C_1$ and $C_2$ are independent of the curvature and torsion.
 Applying \eqref{gblq-B50} again, we derive
\begin{equation}
\label{gblq-B210}
\begin{aligned}
\fg^{i\bj} [(u_{y_n} - \varphi_{y_n})^2]_{i\bj}
   = \,& 2 \fg^{i\bj}
         (u_{y_n} - \varphi_{y_n})_i (u_{y_n} - \varphi_{y_n})_{\bj} \\
       & + 2 (u_{y_n} - \varphi_{y_n})
         \fg^{i\bj} (u_{y_n} -  \varphi_{y_n})_{i\bj} \\
\geq \,& \fg^{i\bj} u_{y_n i} u_{y_n \bj}
          - 2 \fg^{i\bj} \varphi_{y_n i} \varphi_{y_n \bj} \\
       & + 2 (u_{y_n} - \varphi_{y_n})
          \fg^{i\bj} (u_{y_n i\bj} - \varphi_{y_n i\bj}) \\
\geq \,& \fg^{i\bj} u_{y_n i} u_{y_n \bj}
          - |(f)_{y_n}| - C_3 (1 + \fg^{i\bj}  g_{i\bj}).
 \end{aligned}
 \end{equation}
Finally, combining \eqref{gblq-B160}-\eqref{gblq-B210} we obtain
\begin{equation}
\label{gblq-B230}
 \fg^{i\bj} [(u_{y_n} - \varphi_{y_n})^2 \pm \mathcal{T} (u - \varphi)]_{i\bj}
  \geq - C_4 (1 + \fg^{i\bj}  g_{i\bj})
  \;\; \mbox{in} \;\Omega_{\delta}.
 \end{equation}
% where $C = C_0 (1 + |R| + |T| + |\nabla T|)$ with $C_0$ independent of
% the curvature and torsion.

Consequently, we may apply Lemma~\ref{cma-lemma-30} to
$w = (u_{y_n} - \varphi_{y_n})^2 \pm  \cT (u - \varphi)$ to obtain
\begin{equation}
\label{cma-180}
|u_{t_{\alpha} x_n} (0)| %\leq A v_{x_n} (0) + |\varphi_{t_{\alpha} x_n} (0)|
                         \leq C, \;\;\;\; \alpha < 2n.
\end{equation}
By \eqref{gblq-B110} we also have
\begin{equation}
\label{cma-180'}
|u_{x_n t_{\alpha}} (0)| %\leq A v_{x_n} (0) + |\varphi_{t_{\alpha} x_n} (0)|
                         \leq C, \;\; \alpha < 2n.
\end{equation}

\vspace{.2in}
%{\bf (d)}
It remains to establish the estimate
\begin{equation}
|u_{x_n x_n} (0)| \leq  C.
\end{equation}
Since we have already derived
\begin{equation}
\label{cma-190}
|u_{t_{\alpha} t_{\beta}} (0)|, \; |u_{t_{\alpha} x_n} (0)|, \;
|u_{x_n t_{\alpha}} (0)| \leq C,
\;\;\;\; \alpha, \beta < 2n,
\end{equation}
it suffices to prove
\begin{equation}
\label{cma-200}
0 \leq \chi_{n \bn} (0) + u_{n \bn} (0)
  = \chi_{n \bn} (0) + u_{x_n x_n} (0) + u_{y_n y_n} (0) \leq C.
\end{equation}

Expanding $\det (u_{i\bj} + \chi_{i\bj})$, we have
\begin{equation}
\label{gblq-B310}
 \det (u_{i\bj} (0)+ \chi_{i\bj} (0))
    = a (u_{n\bn} (0) + \chi_{n \bn} (0)) + b
\end{equation}
where
\[ a = \det (u_{\alpha \bar{\beta}} (0) + \chi_{\alpha \bar{\beta}} (0))
   |_{\{1 \leq \alpha, \beta \leq n-1\}} \]
and $b$ is bounded in view of \eqref{cma-190}.
Since $\det (u_{i\bj} + \chi_{i\bj})$ is bounded,
 we only have to derive an {\em a priori} positive lower bound for $a$, which
is equivalent to
\begin{equation}
\label{cma-80}
\sum_{\alpha, \beta < n}
  (u_{{\alpha} \bar{\beta}} (0) + \chi_{\alpha \bar{\beta}} (0))
   \xi_{\alpha} \bar{\xi}_{\beta} \geq c_0 |\xi|^2, \;\;
\;\; \forall \, \xi %= (\xi_1, \ldots, \xi_{n-1})
\in {\bfC}^{n-1}
\end{equation}
for a uniform constant $ c_0 > 0$.

\begin{proposition}
\label{prop-cma-10}
 There exists $c_0 = c_0 (\ul{\psi}^{-1},
\varphi, \ul{u}) > 0$ such that (\ref{cma-80})
holds.
\end{proposition}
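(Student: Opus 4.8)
The plan is to reduce \eqref{cma-80} to an \emph{a priori upper} bound for the doubly normal second derivative $u_{x_nx_n}(0)$, and then to prove that bound by comparing $u$ near $p$ with a barrier constructed from the subsolution $\ul{u}$, in the spirit of \cite{CKNS} and \cite{Guan98b}. It is the barrier step that genuinely uses the hypotheses (a subsolution and $\psi>0$) and that I expect to be the main obstacle; the reduction is essentially linear algebra together with the equation.

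\emph{Reduction.} After a unitary change of the tangential coordinates $(z_1,\dots,z_{n-1})$ — which keeps $z$ a regular coordinate chart at $p$ — I may assume the Hermitian $(n-1)\times(n-1)$ tangential matrix $A:=\{u_{\alpha\bar\beta}(0)+\chi_{\alpha\bar\beta}(0)\}_{\alpha,\beta<n}$ is diagonal at $0$ with its smallest entry in the $(1,1)$ slot; as $A$ is a principal submatrix of the positive matrix $\{u_{i\bar j}(0)+\chi_{i\bar j}(0)\}$ it is positive definite, and \eqref{cma-80} becomes the single inequality $u_{1\bar 1}(0)+\chi_{1\bar 1}(0)\ge c_0$. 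By \eqref{cma-190} the entries of $A$, and more generally all second derivatives of $u$ at $0$ except $u_{x_nx_n}(0)$, are bounded. Set $\tau:=u_{n\bar n}(0)+\chi_{n\bar n}(0)>0$. Since $\det\{u_{i\bar j}(0)+\chi_{i\bar j}(0)\}=\psi(0,u(0))\det g(0)$ with $0<\ul{\psi}\le\psi\le\bar{\psi}$, the expansion \eqref{gblq-B310} reads $\det A\cdot\tau+b=\psi(0,u(0))\det g(0)$ with $|b|\le C$, while Fischer's inequality gives $\psi(0,u(0))\det g(0)=\det\{u_{i\bar j}(0)+\chi_{i\bar j}(0)\}\le\tau\,\det A$. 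Consequently: if $u_{1\bar 1}(0)+\chi_{1\bar 1}(0)\ge c_0$ then $\det A\ge c_0^{\,n-1}$ and $\tau\le C(c_0)$; and, conversely, if $\tau\le C$ then $\det A\ge\ul{\psi}\det g/C$, which together with the upper bound on the eigenvalues of $A$ from \eqref{cma-190} forces $A\ge c_0 I$. Thus \eqref{cma-80} is equivalent to an a priori bound $u_{x_nx_n}(0)\le C(\ul{\psi}^{-1},\varphi,\ul{u})$.

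\emph{The barrier, and the obstacle.} To bound $u_{x_nx_n}(0)$ from above I would, following \cite{Guan98b}, compare $u$ on the half-ball $\Omega_\delta=M\cap B_\delta$ with a function $\Psi$ obtained from a $C^2$ extension of $\varphi$ whose second derivatives at $p$ match the already-estimated ones \eqref{cma-190}, corrected by the barrier terms $t\sigma-N\sigma^2$ of Lemma~\ref{cma-lemma-20} and a term $B\rho^2$, together with a further correction arranged so that $\Psi$ is an admissible supersolution of \eqref{gblq-I10} in $\Omega_\delta$, satisfies $\Psi\ge u$ on $\partial\Omega_\delta$, and touches $u$ at $p$ with $\nabla\Psi(p)=\nabla u(p)$; the strict positivity $\psi\ge\ul{\psi}>0$ is precisely what leaves room to keep $(\chi+\frac{\sqrt{-1}}{2}\partial\bpartial\Psi)^n\le\psi(z,\Psi)\,\omega^n$ while keeping $\Psi_{x_nx_n}(0)$ bounded. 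The comparison principle then gives $u\le\Psi$ in $\Omega_\delta$, whence $u_{x_nx_n}(0)\le\Psi_{x_nx_n}(0)\le C(\ul{\psi}^{-1},\varphi,\ul{u})$. I expect the construction of $\Psi$ to be the crux: in a non-pseudoconvex direction the naive identity $u_{1\bar 1}(0)+\chi_{1\bar 1}(0)=\ul{u}_{1\bar 1}(0)+\chi_{1\bar 1}(0)+(u-\ul{u})_{x_n}(0)\,\sigma_{1\bar 1}(0)$, which follows from \eqref{cma-60'} and \eqref{cma-E86}, carries a term of uncontrolled sign, so one cannot dispense with the \emph{global} subsolution nor with $\psi>0$, and it is exactly here that the argument goes beyond the strictly pseudoconvex situation treated by Cherrier and Hanani.
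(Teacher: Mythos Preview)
Your reduction is correct and matches the paper's final step: once $u_{x_nx_n}(0)\le C$ is known, the equation $\det(u_{i\bar j}+\chi_{i\bar j})\ge\ul{\psi}>0$ together with the upper bounds \eqref{cma-190} forces a uniform lower bound on all eigenvalues, hence \eqref{cma-80}. The gap is in your barrier step, where the actual work lies.

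You propose to build a fully nonlinear supersolution $\Psi$ touching $u$ at $p$ with controlled $\Psi_{x_nx_n}(0)$, and then compare. But you do not construct $\Psi$, and it is not clear this can be done: to be an \emph{admissible} supersolution $\Psi$ must have $\chi_\Psi>0$, yet in the tangential direction where $m_0$ is smallest this positivity is exactly what you are trying to establish, so the construction threatens to be circular. The paper (and \cite{Guan98b}, which you cite) does \emph{not} proceed this way. Three ideas are missing from your outline. First, one works not at an arbitrary boundary point but at the point $p\in\partial M$ where the minimum
\[
m_0=\min_{\partial M}\ \min_{\xi\in T^{1,0}\partial M,\ |\xi|=1}\chi_u(\xi,\bar\xi)
\]
is attained; this global choice is what makes the auxiliary function below nonnegative on $\partial M$. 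Second, one uses the identity you wrote down as a dichotomy: either $m_0\ge\tfrac12(\ul{u}_{1\bar1}(0)+\chi_{1\bar1}(0))$ and we are done, or else \eqref{cma-100} together with the $C^1$ bound forces $\sigma_{1\bar1}(0)\ge c_1>0$. Third, one builds a smooth unit tangential $(1,0)$ field $\zeta$ near $p$ with $\zeta(0)=\partial/\partial z_1$ and sets
\[
\varPhi=(\varphi_{j\bar k}+\chi_{j\bar k})\zeta_j\bar\zeta_k-(u-\varphi)_{x_n}\,\sigma_{j\bar k}\zeta_j\bar\zeta_k-m_0,
\]
which by \eqref{cma-60'} equals $\chi_u(\zeta,\bar\zeta)-m_0\ge0$ on $\partial M\cap B_\delta$ (precisely because $p$ is the minimum point) and vanishes at $p$. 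One then checks, using \eqref{gblq-B50} and \eqref{gblq-B190}, that $(u_{y_n}-\varphi_{y_n})^2-\varPhi$ satisfies the hypotheses of the \emph{linearized} barrier Lemma~\ref{cma-lemma-30}; this gives $\varPhi_{x_n}(0)\ge -C$, which unwinds to $u_{x_nx_n}(0)\le C/\sigma_{1\bar1}(0)\le C/c_1$. No nonlinear supersolution is ever constructed; the argument runs entirely through the linearized operator $\mathfrak g^{i\bar j}\partial_i\bar\partial_j$ and the minimum-point trick.
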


\begin{proof}
Let $T_C \partial M \subset T_C M $
be the complex tangent bundle of $\partial M$ and
\[ T^{1,0} \partial M =  T^{1,0} M \cap T_C \partial M
%T_C^{1,0} \partial \Omega
   = \Big\{\xi \in T^{1,0} M: d \sigma (\xi) = 0\Big\}. \]
 In local coordinates,
\[ T^{1,0} \partial M
   = \Big\{\xi = \xi_i \frac{\partial}{\partial z_i} \in T^{1,0} M:
        \sum \xi_i \sigma_i = 0 \Big\}. \]
It is enough to establish a positive lower bound for
\[ m_0 = %\min_{\partial M}
       \min_{\xi \in T^{1,0} \partial M,  |\xi| = 1}
      \chi_u (\xi, \bar{\xi}). \]
         %  u_{i\bj} (p) \xi_i \bar{\xi}_j \]

We assume that $m_0$ is attained at a point $p \in \partial M$ and
choose regular local coordinates %$z = (z_1, \cdots, z_n)$
around $p$
as before such that
\[ m_0 = \chi_u \Big(\frac{\partial}{\partial z_1},
    \frac{\partial}{\partial \bz_1}\Big)
       = u_{1\bar{1}} (0) + \chi_{1\bar{1}} (0). \]
One needs to show
\begin{equation}
\label{cma-90}
m_0 = u_{1\bar{1}} (0) + \chi_{1\bar{1}} (0) \geq c_0 > 0.
\end{equation}
By \eqref{cma-60'},
\begin{equation}
\label{cma-100}
u_{1\bar{1}} (0) = \ul{u}_{1\bar{1}} (0)
        - (u-\ul{u})_{x_n} (0) \sigma_{1\bar{1}} (0).
\end{equation}
We can assume
$u_{1\bar{1}} (0)
 \leq \frac{1}{2} (\ul{u}_{1\bar{1}} (0) - \chi_{1\bar{1}} (0)$;
otherwise we are done.
Thus %By \eqref{cma-100},
\begin{equation}
\label{cma-100'}
(u-\ul{u})_{x_n} (0) \sigma_{1\bar{1}} (0)
  \geq \frac{1}{2} (\ul{u}_{1\bar{1}} (0) + \chi_{1\bar{1}} (0)).
\end{equation}
It follows from \eqref{cma-38} that
\begin{equation}
\label{cma-101}
\sigma_{1\bar{1}} (0) \geq
  \frac{\ul{u}_{1\bar{1}} (0) + \chi_{1\bar{1}} (0)}{2 K}
                      \geq \frac{\epsilon}{2 K} \equiv c_1 > 0
\end{equation}
where $K = \max_{\partial M} |\nabla (u-\ul{u})|$.

Let $\delta > 0$ be small enough so that
\[ \begin{aligned}
  w \equiv \, &  \Big|- \sigma_{z_n} \frac{\partial}{\partial z_1}
   + \sigma_{z_1} \frac{\partial}{\partial z_n}\Big|  \\
 = \, & \left(g_{1\bar{1}} |\sigma_{z_n}|^2 - 2 \fRe \{g_{1\bar{n}} \sigma_{z_n} \sigma_{\bz_1}\}
   + g_{n\bar{n}} |\sigma_{z_1}|^2\right)^{\frac{1}{2}} > 0 \;\;
\mbox{in $M \cap B_{\delta} (p)$}.
\end{aligned} \]
Define $\zeta = \sum \zeta_i \frac{\partial}{\partial z_i} \in T^{1,0} M$
in $M \cap B_{\delta} (p)$:
\[ \left\{ \begin{aligned}
   \zeta_1 & \, = - \frac{\sigma_{z_n}}{w}, \\
   \zeta_j & \, = 0, \; \; 2 \leq j \leq n-1, \\
   \zeta_n & \, = \frac{\sigma_{z_1}}{w}
   \end{aligned} \right.  \]
and
\[ \varPhi = (\varphi_{j\bk} + \chi_{j\bk}) \zeta_j \bar{\zeta}_k
   - (u - \varphi)_{x_n} \sigma_{j\bk} \zeta_j \bar{\zeta}_k
   - u_{1\bar{1}}(0) - \chi_{1\bar{1}} (0). \]
%   - (u_{y_n} - \phi_{y_n})^2. \]
Note that $\zeta \in T^{1,0} \partial M$ on $\partial M$ and $|\zeta| = 1$.
By \eqref{cma-60'},
\begin{equation}
\label{cma-103}
\varPhi = (u_{j\bk} + \chi_{j\bk}) \zeta_j \bar{\zeta}_k
  - u_{1\bar{1}}(0) - \chi_{1\bar{1}} (0) \geq 0
%    - (u_{y_n} - \phi_{y_n})^2 \geq - C |z|^2
\;\;  \mbox{on $\partial M \cap B_{\delta} (p)$}
\end{equation}
and $ \varPhi (0) = 0$.

Write $G = \sigma _{i\bj} \zeta_i \bar{\zeta}_i$. We have
\begin{equation}
\label{gblq-B360}
 \begin{aligned}
\fg^{i\bj} \varPhi_{i\bj}
 \leq \, & - \fg^{i\bj} (u_{x_n} G)_{i\bj} + C (1 + \fg^{i\bj}  g_{i\bj}) \\
    = \, & - G \fg^{i\bj} u_{x_n i\bj} - 2 \fg^{i\bj} \fRe\{u_{x_n i} G_{\bj}\}
 % -  u_{x_n} \fg^{i\bj} G_{i\bj}
        + C (1 + \fg^{i\bj}  g_{i\bj}) \\
%  + u^{j\bk} (u_{x_n \bk} G_j + u_{x_n j} G_{\bk}) + u_{x_n} \mathcal{L} G \\
 \leq \, & \fg^{i\bj} u_{y_n i} u_{y_n \bj}
       + C (1 + \fg^{i\bj}  g_{i\bj})
\end{aligned}
\end{equation}
by \eqref{gblq-B50} and \eqref{gblq-B190}.
It follows that
\begin{equation}
\label{cma-105}
\fg^{i\bj} [\varPhi - (u_{y_n} - \varphi_{y_n})^2]_{i\bj}
      \leq C (1 + \fg^{i\bj}  g_{i\bj})
        \;\; \mbox{in $M \cap B_{\delta} (p)$}.
\end{equation}
Moreover, by \eqref{cma-175} and \eqref{cma-103},
\[ (u_{y_n} - \varphi_{y_n})^2 - \varPhi  \leq C |z|^2 \;\;
 \mbox{on $\partial M \cap B_{\delta} (p)$}. \]
Consequently, we may apply Lemma~\ref{cma-lemma-30} to
\[ h = (u_{y_n} - \varphi_{y_n})^2 - \varPhi \]
to derive $\varPhi_{x_n} (0) \geq -C$
which, by \eqref{cma-101}, implies
\begin{equation}
\label{cma-310}
 u_{x_n x_n} (0) \leq \frac{C}{\sigma_{1\bar{1}} (0)} \leq \frac{C}{c_1}.
\end{equation}

In view of \eqref{cma-190} and \eqref{cma-310} we have an {\em a priori}
upper bound for all eigenvalues of $\{u_{i\bj} (0) + \chi_{i\bj} (0)\}$.
Since $\det (u_{i\bj} + \chi_{i\bj}) \geq \ul{\psi} > 0$, the eigenvalues of
$\{u_{i\bj} (0)  + \chi_{i\bj} (0) \}$ must admit a positive lower bound,
i.e.,
\[ \min_{\xi \in T^{1,0}_p M, |\xi| = 1}
    (u_{i\bj} + \chi_{i\bj}) \xi_i \bar{\xi}_j \geq c_0.\]
Therefore,
\[ m_0 = \min_{\xi \in T_p^{1,0} \partial M, |\xi| = 1}
         (u_{i\bj} + \chi_{i\bj}) \xi_i \bar{\xi}_j
   \geq \min_{\xi \in T^{1,0}_p M, |\xi| = 1}
         (u_{i\bj} + \chi_{i\bj}) \xi_i \bar{\xi_j}
    \geq c_0.\]
The proof of Proposition~\ref{prop-cma-10} is complete.
\end{proof}

We have therefore established (\ref{cma-37}).

\bigskip

\section{Estimates for the real Hessian and higher derivatives}
\label{gblq-R}
\setcounter{equation}{0}
\medskip

The primary goal of this section is to derive global estimates for the whole
(real) Hessian
\begin{equation}
\label{gblq-R5}
|\nabla^2 u| \leq  C  \;\; \mbox{on $\bM$}.
\end{equation}
This is equivalent to
\begin{equation}
\label{cma-410'}
 |u_{x_i x_j} (p)|, \; |u_{x_i y_j} (p)|, \;  |u_{y_i y_j} (p)| \leq C, \;\;
 \forall \, 1 \leq i, j \leq n
\end{equation}
in local coordinates  $z = (z_1, \ldots, z_n)$,
$z_j = x_j + \sqrt{-1} y_j$ with $g_{i\bj} (p) = \delta_{ij}$ for
any fixed point $p \in M$,
where the constant $C$ may depend on $|u|_{C^1 (M)}$,
$\sup_M \Delta u$, $\inf \psi > 0$, and the curvature and torsion of $M$ as
well as their derivatives. Once this is done we can apply the Evans-Krylov
Theorem to obtain global $C^{2, \alpha}$ estimates.

As in Section~\ref{gblq-B} we shall use covariant derivatives.
We start with communication formulas for the fourth order derivatives.
From  direct computation,
\begin{equation}
\label{gblq-R145}
\left\{
\begin{aligned}
v_{ij \bk \bl} - v_{ij \bl \bk} = \, & \ol{T_{kl}^q} v_{ij \bq}, \\
v_{ij k \bl} - v_{ij \bl k} = \, & g^{p\bq} R_{k\bl i\bq} v_{pj}
                                 + g^{p\bq} R_{k\bl j\bq} v_{ip}.
\end{aligned}
 \right.
\end{equation}
Therefore, by \eqref{gblq-B145}, \eqref{gblq-B150}, \eqref{gblq-B155},
\eqref{gblq-R145} and \eqref{cma-K115},
\begin{equation}
\label{gblq-R150}
 \begin{aligned}
v_{i \bj k \bl} - v_{k \bl i \bj}
 = \,& (v_{i \bj k \bl} - v_{k i \bj \bl})
         + (v_{k i \bj \bl} - v_{k i \bl \bj})
         + (v_{k i \bl \bj} - v_{k \bl i \bj}) \\
 = \,& \nabla_{\bl} (- g^{p\bq} R_{i \bj k \bq} v_p + T_{ik}^p v_{p\bj})
     + \ol{T_{jl}^q} v_{ki\bq} + g^{p\bq} \nabla_{\bj} (R_{i \bl k \bq} v_p)\\
 = \,& T_{ik}^p v_{p\bj \bl} + \ol{T_{jl}^q} v_{i\bq k}
       - T_{ik}^p \ol{T_{jl}^q} v_{p\bq}
       + g^{p\bq} (R_{k\bl i\bq} v_{p\bj} - R_{i\bj k\bq} v_{p\bl}) \\
   & + g^{p\bq} (\nabla_{\bj} R_{i\bl k\bq} - \nabla_{\bl} R_{i\bj k\bq}
              + R_{i\bm k\bq} \ol{T_{jl}^m}) v_p
\end{aligned}
\end{equation}
and
\begin{equation}
\label{gblq-R155}
 \begin{aligned}
v_{i \bj k l} - v_{k l i \bj}
   = \,& v_{i \bj k l} - v_{k i \bj l}
         + v_{k i \bj l} - v_{k i l \bj}
         + v_{k i l \bj} - v_{k l i \bj} \\
   = \,& \nabla_{l} (- g^{p\bq} R_{i \bj k \bq} v_p + T_{ik}^p v_{p\bj})
     -  g^{p\bq} R_{l\bj k\bq} v_{pi} - g^{p\bq} R_{l\bj i\bq} v_{kp} \\
     & + \nabla_{\bj}  (g^{p\bq} R_{ilk\bq} v_p + T_{il}^p v_{kp})\\
   = \,& -g^{p \bq}R_{i\bj k \bq}v_{pl}-g^{p \bq}R_{i \bj l \bq}v_{kp}-g^{p \bq}R_{l \bj k \bq}v_{pi} \\
   & -g^{p \bq}[(\nabla_l R_{i \bj k \bq})+(\nabla_{\bj} R_{ilk \bq})]v_p\\
   & + [(\nabla_l T_{ik}^p)+g^{p \bq} R_{i l k \bq}]v_{p \bj}\\
   & + T_{ik}^p v_{p \bj l}+T_{il}^p v_{k p \bj}.
\end{aligned}
\end{equation}

We now turn to the proof of \eqref{cma-410'}. It suffices to derive
the following estimate.

\begin{proposition}
\label{gblq-prop-R10}
 There exists constant $C > 0$ depending on $|u|_{C^1 (\bM)}$,
$\sup_M \Delta u$ and $\inf \psi > 0$ such that
\begin{equation}
\label{gblq-R5'}
 \sup_{\tau \in T M, |\tau| = 1} u_{\tau \tau}  \leq C.
\end{equation}
\end{proposition}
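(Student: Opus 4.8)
The plan is to estimate the largest eigenvalue of $\nabla^2 u$ at an interior maximum by a maximum-principle argument with $|\nabla u|^2$ as auxiliary function, exploiting that the hypotheses $\sup_M\Delta u<\infty$ and $\inf\psi>0$ already force the eigenvalues of $\{\fg_{i\bj}\}=\{u_{i\bj}+\chi_{i\bj}\}$ to lie between two positive constants (trace bounded above by $\sup_M\Delta u+\sup_M\tr\chi$, determinant bounded below by $\inf\psi\cdot\min_{\bM}\det g$), so that $\sum\fg^{i\bi}\le C$ and the linearized operator $\fg^{i\bj}\p_i\p_{\bj}$ is uniformly elliptic on $\bfR^{2n}$.

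I would first reduce to an upper bound. For a unit vector $\tau$ put $\zeta=\frac1{\sqrt2}(\tau-\sqrt{-1}\,J\tau)$; then $|\zeta|=1$, and since the Chern torsion has no mixed components, $u_{\zeta\bar\zeta}=\frac12(u_{\tau\tau}+u_{(J\tau)(J\tau)})$ exactly. As $|u_{i\bj}|\le C$ (admissibility together with the bound on $\Delta u$), an upper bound for $u_{\tau\tau}$ over unit $\tau$ gives $u_{(J\tau)(J\tau)}\ge -C$, hence, letting $\tau$ vary, the full two-sided bounds \eqref{gblq-R5} and \eqref{cma-410'} (the covariant and ordinary second derivatives differing by first-order terms). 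The same elementary linear algebra supplies what the argument needs: if $u_{\tau_0\tau_0}=\sup_\tau u_{\tau\tau}$ is large at a point, then — the $J$-invariant (Hermitian) part of the symmetric part of $\nabla^2 u$ being $O(1)$ — the $(2,0)$-part must be comparably large, quantitatively $\sum_{k,i}|u_{ki}|^2\ge c\,u_{\tau_0\tau_0}^2-C$.

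For \eqref{gblq-R5'}, consider on $\bM\times S(TM)$ the function $W(x,\tau)=\log u_{\tau\tau}(x)+\epsilon_0|\nabla u(x)|^2$ with $\epsilon_0>0$ to be chosen small; assume the relevant $u_{\tau\tau}$ is positive, and if $W$ is maximized on $\partial M$ invoke \eqref{cma-37}. At an interior maximum $p$ in a direction $\tau_0$, take coordinates with $g_{i\bj}(p)=\delta_{ij}$, $\{\fg_{i\bj}(p)\}$ diagonal, extend $\tau_0$ with constant coefficients, and use $W_i=0$, $\fg^{i\bj}W_{i\bj}\le 0$. Differentiating $\log\det\fg_{i\bj}=\log\psi+\log\det g$ twice along $\tau_0$, discarding the nonpositive concavity term and using the fourth-order commutators \eqref{gblq-R150}--\eqref{gblq-R155} and \eqref{gblq-B50}, one obtains $\fg^{i\bj}(u_{\tau_0\tau_0})_{i\bj}\ge -C(1+u_{\tau_0\tau_0})$ — the curvature$\,\times\,(2,0)$-Hessian commutator terms enter only linearly, thanks to the a priori bound $|u_{ij}|\le C(1+\sup_\tau u_{\tau\tau})$. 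By \eqref{bglq-M80}, using $\sum\fg^{i\bi}\le C$ and the positivity just recorded, $\fg^{i\bj}(|\nabla u|^2)_{i\bj}\ge c\,u_{\tau_0\tau_0}^2-C$. Finally $W_i=0$ turns the bad term into $\fg^{i\bi}|(u_{\tau_0\tau_0})_i|^2/u_{\tau_0\tau_0}^2=\epsilon_0^2\,\fg^{i\bi}|(|\nabla u|^2)_i|^2\le C\epsilon_0^2\,u_{\tau_0\tau_0}^2$, again because $(|\nabla u|^2)_i$ is linear in the $(2,0)$-Hessian (by \eqref{gblq-C30}). Collecting these in $0\ge\fg^{i\bj}W_{i\bj}$ and choosing $\epsilon_0$ small so that $\epsilon_0 c-C\epsilon_0^2>0$ yields $\epsilon_0 c'\,u_{\tau_0\tau_0}^2\le C(1+u_{\tau_0\tau_0})$, hence $u_{\tau_0\tau_0}(p)\le C$, with $C$ depending only on $|u|_{C^1(\bM)}$, $\sup_M\Delta u$, $\inf\psi$, and the fixed geometry.

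The crux is the unwanted third-order term $\fg^{i\bi}|(u_{\tau_0\tau_0})_i|^2$: unlike the trace $(\Delta u)_i$ used in Proposition~\ref{gblq-prop-C10}, the derivative of a single second derivative is not supplied by the differentiated equation, and with $\tau_0$ a real direction it contains pure (e.g.\ $(3,0)$) third derivatives of $u$; the device above is to make it, via the critical-point relation and the factor $\epsilon_0$, quadratic in $u_{\tau_0\tau_0}$ with a constant that the competing good term $\fg^{i\bj}(|\nabla u|^2)_{i\bj}$ beats. The remaining care is bookkeeping: tracking the torsion through every commutation formula, checking that all other terms are genuinely lower order in $u_{\tau_0\tau_0}$, reducing $\chi=\chi(\cdot,u)$ (nondecreasing) to a fixed form via $v=u-\ul u$ as in Section~\ref{gblq-G} when convenient, and handling the non-smoothness of $\tau\mapsto u_{\tau\tau}$ at $p$ when the top eigenvalue of $\nabla^2 u$ is not simple by a small perturbation of $W$.
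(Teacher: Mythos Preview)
Your argument is essentially correct in the K\"ahler case (this is Blocki's approach; cf.\ Remark~\ref{gblq-remark-R10}), but there is a genuine gap in the Hermitian setting. When you commute $(u_{\tau_0\tau_0})_{i\bj}$ with $(u_{i\bj})_{\tau_0\tau_0}$ via \eqref{gblq-R150}--\eqref{gblq-R155}, the commutator contains not only the curvature\,$\times$\,(2,0)-Hessian terms you mention, but also torsion\,$\times$\,\emph{third}-derivative terms such as $T_{ik}^{p}u_{p\bj l}$ and $T_{il}^{p}u_{kp\bj}$ (and their conjugates from \eqref{gblq-R150}). These are not bounded by $C(1+u_{\tau_0\tau_0})$, so the claimed inequality $\fg^{i\bj}(u_{\tau_0\tau_0})_{i\bj}\ge -C(1+u_{\tau_0\tau_0})$ fails as stated. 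Your auxiliary $\epsilon_0|\nabla u|^2$ produces (via \eqref{bglq-M80}) a good \emph{second}-order term $\fg^{i\bi}|u_{ki}|^2$, but no positive third-order term, and the concavity term you discard controls only $\fg^{i\bi}\fg^{j\bj}|u_{i\bj\tau_0}|^2$, not $\fg^{i\bi}\fg^{p\bp}|u_{p\bi l}|^2$ for each holomorphic index $l$ separately---which is what Cauchy--Schwarz on the torsion terms would require.

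The paper's device is to add $A|\chi_u|^2$ (with large $A$) to the test function instead of taking a logarithm: differentiating $|\chi_u|^2$ yields the good term $2A\fg^{i\bi}(u_{k\bl}+\chi_{k\bl})_i(u_{l\bk}+\chi_{l\bk})_{\bi}\ge A\fg^{i\bi}u_{k\bl i}u_{l\bk\bi}-C$, exactly the quadratic third-derivative quantity needed to absorb the torsion contributions (see \eqref{gblq-R60}--\eqref{gblq-R80}). With this in hand the argument closes to $\sum_{i,k}|u_{ki}|^2\le C$, hence $u_{\tau\tau}(p)\le C$. So your overall strategy---maximize a perturbation of $u_{\tau\tau}$ and exploit uniform ellipticity to extract $\sum|u_{ki}|^2$---matches the paper's, but you need to replace (or supplement) $\epsilon_0|\nabla u|^2$ by $A|\chi_u|^2$ to control the torsion-induced third-order terms that are invisible in the K\"ahler case.
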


\begin{proof}
Suppose that
\[ N :=\sup_M \Big\{|\nabla u|^2 + A |\chi_u|^2
       + \sup_{\tau \in T M, |\tau| = 1} u_{\tau \tau} \Big\} \]
is achieved at an interior point $p \in M$ and for some
unit vector $\tau \in T_p M$, where $A$ is positive constant to be determined.
We choose local coordinates $z = (z_1, \ldots, z_n)$
such that $g_{i\bj} = \delta_{ij}$ and $\{u_{i\bj} + \chi_{i\bj}\}$ is diagonal
at $p$. Thus $\tau$ can be written in the form
\[ \tau = a_j \frac{\partial}{\partial z_j}
          + b_{\bj} \frac{\partial}{\partial \bz_j},
\;\; a_j, b_{\bj} \in \bfC, \;\;  \sum a_j b_{\bj} = \frac{1}{2}.  \]

Let $\xi$ be a smooth unit vector field defined in a neighborhood of $p$
such that $\xi (p) = \tau$. Then the function
\[ Q = u_{\xi \xi} + |\nabla u|^2 + A |\chi_u|^2 \]
(defined in a neighborhood of $p$) attains it maximum at $p$ where,
\begin{equation}
\label{gblq-R20}
Q_i = u_{\tau \tau i} + u_k u_{i\bk} + u_{\bk} u_{ki}
   + 2 A (u_{k\bk}+ \chi_{k\bk}) (u_{k\bk} + \chi_{k\bk})_i = 0
\end{equation}
and
\begin{equation}
\label{gblq-R30}
\begin{aligned}
0 \geq \fg^{i\bi} Q_{i\bi}
 =  \,& \fg^{i\bi}(u_{k\bi} u_{i\bk} + u_{ki} u_{\bk \bi})
        +  \fg^{i\bi} (u_k u_{i\bk \bi} + u_{\bk} u_{ki\bi}) \\
      + \fg^{i\bi} \,& u_{\tau \tau i\bi}
   + 2 A \fg^{i\bi} (u_{k\bl}+\chi_{k\bl})_i (u_{l\bk}+(\chi_{l\bk})_{\bi} \\
   &  + 2 A (u_{k\bk}+\chi_{k\bk}) \fg^{i\bi}
              (u_{k\bk i\bi}+(\chi_{k\bk})_{i\bi}).
      %+ B \fg^{i\bi} (|\nabla u|^2)_{i\bi}
\end{aligned}
\end{equation}

Differentiating equation~(\ref{cma2-M10}) twice (using covariant derivatives),
by \eqref{gblq-B150} and \eqref{gblq-R150} we have
\begin{equation}
\label{gblq-R40}
 \fg^{i\bi} u_{ki\bi} = (f)_k
  + \fg^{i\bi} (R_{i\bi k\bl} u_l - T_{ik}^l u_{l\bi} - (\chi_{i\bi})_k)
                   \geq (f)_k - C \Big(1 + \sum \fg^{i\bi}\Big)
\end{equation}
and
\begin{equation}
\label{gblq-R50}
\begin{aligned}
\fg^{i\bi} u_{k\bk i\bi}
 \geq  \fg^{i\bi} \fg^{j\bj} |u_{i\bj k} \,& + (\chi_{i\bj})_k|^2
    + \fg^{i\bi}  (T_{ik}^p u_{p \bi \bk} + \ol{T_{ik}^p} u_{i\bp k}) \\
     & + (f)_{k\bk} - C \sum \fg^{i\bi}
 \geq (f)_{k\bk} - C \Big(1 + \sum \fg^{i\bi}\Big).
\end{aligned}
\end{equation}
%(Here we used \eqref{gblq-B150} again for the last inequality.)

Note that
\[ u_{\tau \tau i\bi} = a_k a_l u_{kl i\bi}
   + 2 a_k b_{\bl}  u_{k\bl i\bi} + b_{\bk} b_{\bl} u_{\bk\bl i\bi}. \]
Using the formulas in
\eqref{gblq-R145}, \eqref{gblq-R150} and \eqref{gblq-R155} we obtain
\begin{equation}
\label{gblq-R60}
 \begin{aligned}
 \fg^{i\bi} u_{\tau \tau i\bi}
 \geq \,&  \fg^{i\bi} u_{i\bi \tau \tau} - C \fg^{i\bi} |T_{ik}^l u_{l\bi k}|
-  C \Big(1 + \sum_{k, l} |u_{kl}|\Big) \sum \fg^{i\bi} \\
 \geq \,& (f)_{\tau \tau} - C \fg^{i\bi} u_{l\bi k} u_{i\bl k}
-  C \Big(1 + \sum_{k, l} |u_{kl}|\Big) \sum \fg^{i\bi}.
\end{aligned}
\end{equation}
From \eqref{gblq-R40}, \eqref{gblq-R50}, \eqref{gblq-R60},
\eqref{gblq-R30} and the inequality
\begin{equation}
\label{gblq-R70}
\begin{aligned}
 2 \fg^{i\bi}
  (u_{k\bl i} + \chi_{k\bl})_i (u_{l\bk \bi}+ \chi_{l\bk})_{\bi}
 \geq \,& \fg^{i\bi} u_{k\bl i} u_{l\bk \bi}
     - \fg^{i\bi} (\chi_{k\bl})_i (\chi_{l\bk})_{\bi},
% \geq  \,&  \fg^{i\bi} u_{k\bl i} u_{l\bk \bi} - C \sum \fg^{i\bi},
\end{aligned}
\end{equation}
we see that
\begin{equation}
\label{gblq-R80}
\fg^{i\bi} u_{ki} u_{\bk \bi} + (A-C) \fg^{i\bi} u_{k\bl i} u_{l\bk \bi}
\leq C \Big(1 + A +  \sum_{k, l} |u_{kl}| \Big) \Big(1 + \sum \fg^{i\bi}\Big).
\end{equation}
We now need the nondegeneracy of equation~\eqref{cma2-M10} which implies that
there is $\Lambda > 0$ depending on $\sup_M \Delta u$ and $\inf \psi > 0$
such that
\[ \Lambda^{-1} \{g_{i\bj}\} \leq \{\fg_{i\bj}\} \leq \Lambda \{g_{i\bj}\} \]
and therefore,
\begin{equation}
\label{gblq-R90}
%\left\{\begin{aligned}
 \sum \fg^{i\bi} \leq n \Lambda, \;\;
 \fg^{i\bi} u_{ki} u_{\bk \bi}
    \geq  \frac{1}{\Lambda} \sum_{i, k} | u_{ki}|^2.
%\end{aligned} \right.
\end{equation}
Plugging these into \eqref{gblq-R80} and choosing $A$ large we derive
\[ \sum_{i, k} | u_{ki}|^2 \leq C.  \]
Consequently, $u_{\tau \tau} (p) \leq C$.
Finally, %\eqref{gblq-R5} follow from
\[  \sup_{q \in M} \sup_{\tau \in T_q M, |\tau| = 1} u_{\tau \tau}
   \leq u_{\tau \tau} (p) + 2 \sup_M (|\nabla u|^2 + A |\chi_u|^2). \]
This completes the proof of \eqref{gblq-R5'}.
\end{proof}

By the Evans-Krylov Theorem (\cite{Evans}, \cite{Krylov82},
\cite{Krylov83}) we derive the $C^{2, \alpha}$ estimates
\begin{equation}
\label{gblq-R100}
|u|_{C^{2, \alpha} (M)} \leq C.
\end{equation}
Higher order regularity and estimates then follow from the classical
Schauder theory for elliptic linear equations.%(see, for example, \cite{GT}).

\begin{remark}
\label{gblq-remark-R10}
When $M$ is a K\"ahler manifold, Proposition~\ref{gblq-prop-R10} was recently
proved by Blocki~\cite{Blocki}. He observed that the estimate \eqref{gblq-R5'}
does not depend on $\inf \psi$ when $M$ has nonnegative bisectional curvature.
This is clearly also true in the Hermitian case.
\end{remark}

\begin{remark}
\label{gblq-remark-R20}
An alternative approach to the $C^{2, \alpha}$ estimate \eqref{gblq-R100} is
to use \eqref{gblq-I60} and the boundary estimate \eqref{cma-37} (in place of
\eqref{gblq-R5}) and apply an extension of the Evans-Krylov Theorem; see
Theorem 7.3, page 126 in \cite{CW} which only requires $C^{1, \alpha}$ bounds
for the solution. This was pointed out to us by Pengfei Guan to whom we wish
to express our gratitude.
\end{remark}

\bigskip

\section{$C^0$ estimates and existence}
\label{gblq-E}
\setcounter{equation}{0}
\medskip

In this section we complete the proof of
Theorems~\ref{gblq-th20} and \ref{gblq-th30}-\ref{gblq-th50}
using the estimates established in previous sections.
We shall consider separately the Dirichlet problem and
the case of manifolds without boundary. In each case we need first to
derive $C^0$ estimates; the existence of solutions then can be proved
by the continuity method or combined with degree arguments.

\subsection{Compact manifolds without boundary}
For the $C^0$ estimate on compact manifolds without boundary, we follow the
argument in \cite{Siu87}, \cite{Tian00} which simplifies the original proof
of Yau~\cite{Yau78}.

Let $(M, \omega)$ be a compact Hermitian manifold without boundary.
Replacing $\chi$ by $\chi_{\phi}$ for $\phi \in \cH_{\chi} \cap C^{\infty}(M)$
if necessary,
we shall assume $\chi \geq \epsilon \omega$. Let $u \in C^{4}(M)$ be
 an admissible solution of equation~\eqref{cma2-M10},  $\sup_M u=-1$.
We write
\[ \tilde{\chi} = \sum_{k=1}^{n} \chi^{k-1} \wedge (\chi_u)^{n-k}. \]
Multiply the identity
$(\chi_u)^n -\chi^n = \frac{\sqrt{-1}}{2} \p \bar \p u \wedge \tilde{\chi}$
 by $(-u)^p$ and integrate over $M$,
\begin{equation}
\label{gblq-E120}
\begin{aligned}
 \int_M (-u)^p \,& [(\chi_u)^n -\chi^n]
   = \frac{\sqrt{-1}}{2} \int_M (-u)^p \p \bar \p u \wedge \tilde{\chi} \\
   = \,& \frac{p \sqrt{-1}}{2}
          \int_M (-u)^{p-1} \p u \wedge \bar \p u \wedge \tilde{\chi}
       + \frac{\sqrt{-1}}{2} \int_M (-u)^p \bar \p u \wedge \p \tilde{\chi} \\
   = \,& \frac{2p \sqrt{-1}}{(p+1)^2} \int_M
     \p (-u)^{\frac{p+1}{2}} \bar\p (-u)^{\frac{p+1}{2}} \wedge \tilde{\chi}
       - \frac{\sqrt{-1}}{2(p+1)} \int_M (-u)^{p+1} \p \bar \p \tilde{\chi}.
%\geq\,& \frac{4p}{(p+1)^2} \int_M
%        \p (-u)^{\frac{p+1}{2}} \bar\p (-u)^{\frac{p+1}{2}}
%               \wedge (\chi)^{n-1}
%         - \frac{1}{p+1} \int_M (-u)^{p+1}  \p  \bar\p \chi.
 \end{aligned}
\end{equation}
We now assume that $\p \bar\p \chi^k = 0$, for $k =1, 2$,
 which implies $\p \bar\p \tilde{\chi} = 0$, and
that $\psi$ does not depend on $u$.
Since $\chi > 0$ and $\chi_u \geq 0$,
we see that $\chi^{k-1} \wedge (\chi_u)^{n-k} \geq 0$ for all $k$.
Therefore,
\begin{equation}
\label{gblq-E130}
\begin{aligned}
    \epsilon^n \int_M  |\nabla (-u)^{\frac{p+1}{2}}|^2 \omega^n
  \leq  \,&  \int_M  |\nabla (-u)^{\frac{p+1}{2}}|^2 \chi^n \\
   = \,& %\frac{1}{\inf \mu^{n-1}}
       \frac{\sqrt{-1}}{2}
       \int_M \p (-u)^{\frac{p+1}{2}} \bar\p (-u)^{\frac{p+1}{2}}
               \wedge \chi^{n-1} \\
\leq \,& \frac{\sqrt{-1}}{2}
         \int_M \p (-u)^{\frac{p+1}{2}} \bar\p (-u)^{\frac{p+1}{2}}
               \wedge \tilde{\chi} \\
   = \,& \frac{(p+1)^2}{2 p} \int_M (-u)^p (\psi \omega^n - \chi^n) \\
 \leq \,& C \int_M (-u)^{p+1} \o^n.
 \end{aligned}
\end{equation}
After this we can derive a bound for $\inf u$ by
the Moser iteration method, following the argument in \cite{Tian00}.

\begin{remark}
\label{gblq-remark-E30}
For $n=2$ we have $\p \bar \p \tilde{\chi} = 2 \p \bar \p \chi$
so the last term in \eqref{gblq-E120} is bounded by
$C \int_M (-u)^{p+1} \omega^n$. Thus the $C^0$ bound
holds for $n=2$ without assumption~\eqref{gblq-I70}.
\end{remark}

If $\psi$ depends on $u$ and satisfies \eqref{gblq-I90}, a bound for
$\sup_M |u|$ follows directly from equation~\eqref{cma2-M10} by the maximum
principle.
Indeed, suppose $u (p) = \max_M u$ for some $p \in M$.
Then $\{u_{i\bj} (p)\} \leq 0$ and, therefore
\[ \det \chi_{i\bj} \geq  \det (u_{i\bj} + \chi_{ij})
= \psi (p, u(p)) \det g_{i\bj}. \]
This implies an upper bound $u (p) \leq C$ by \eqref{gblq-I90}.
That $\min_M u \geq -C$ follows from a similar argument.

\begin{proof}[Proof of Theorem~\ref{gblq-th30}]
We first consider the case that $\psi$ does not depend on $u$.
By assumption~\eqref{gblq-I70} we see that
\[ \int_M (\chi_u)^n = \int_M \chi^n. \]
Therefore,
\[ \int_M \chi^n = \int_M \psi \omega^n  \]
is a necessary condition for the existence of admissible solutions,
and that the linearized operator,
$v \mapsto \fg^{i\bj} v_{i\bj}$, %\partial_i \bpartial_j v$,
of equation~\eqref{gblq-I10} is self-adjoint (with respect to the volume
form $(\chi_u)^n$).
So the continuity method proof in \cite{Yau78} works to give a unique
admissible solution $u \in \cH \cap C^{2, \alpha} (M)$ of \eqref{gblq-I10}
satisfying
\[ \int_M u \omega^n = 0. \]
The smoothness of $u$ follows from the Schauder regularity theory.

For the general case under the assumption $\psi_u \geq 0$, one
can still follow the proof of Yau~\cite{Yau78}. So we omit it here.
\end{proof}

\begin{proof}[Proof of Theorem~\ref{gblq-th40}]
The uniqueness follows easily from the assumption $\psi_u > 0$ and the
maximum principle. For the existence we make use of the continuity method.
For $0 \leq s \leq 1$ consider
\begin{equation}
\label{gblq-E10}
   (\chi_u)^n
  = \psi^s (z, u) \omega^n \;\; \mbox{in $M$}
\end{equation}
where $\psi^s (z, u) = (1-s) e^u + s \psi (z, u)$. Set
\[ S := \{s \in [0,1]: \mbox{equation~\eqref{gblq-E10} is solvable in
           $\cH_{\chi} \cap C^{2, \alpha} (M)$}\} \]
and let $u^s \in \cH_{\chi} \cap C^{2, \alpha} (M)$ be the unique solution of
\eqref{gblq-E10} for $s \in S$. Obviously $S \neq \emptyset$ as
$0 \in S$ with $u^0 = 0$. Moreover, by the $C^{2, \alpha}$ estimates
we see that $S$ is closed. We need to show that $S$ is also open in
and therefore equal to $[0, 1]$; $u^1$ is then the desired solution.

Let $s \in S$ and let $\Delta^s$ denote the Laplace operator of
$(M, \chi_{u^s})$. In local coordinates,
\[ \Delta^s v = \fg^{i\bj} v_{i\bj} = \fg^{i\bj}  \partial_i \bpartial_j v \]
where $\{{\fg}^{i\bj}\} = \{\fg^{s}_{i\bj}\}^{-1}$ and
$\fg^{s}_{i\bj} = \chi_{i\bj} + u^s_{i\bj}$.
Note that $\Delta^s - \psi^s_u$,
where $\psi^{s}_u = \psi^{s}_u (\cdot, u^{s})$,
is the linearized operator of equation~\eqref{gblq-E10} at $u^s$, .
We wish to prove that  for any $\phi \in C^{\alpha} (M, \chi_{u^s})$
there exists a unique solution $v \in C^{2, \alpha} (M, \chi_{u^s})$
to the equation
\begin{equation}
\label{gblq-E30}
\Delta^s v - \psi^s_u v = \phi,
\end{equation}
which implies by the implicit function theorem that
$S$ contains a neighborhood of $s$ and hence is open in $[0,1]$, completing
the proof.

The proof follows a standard approach, using the Lax-Milgram theorem and
the Fredholm alternative. For completeness we include it here.

Let $\gamma > 0$ and define a bilinear form on the Sobolev space
$H^1 (M, \chi_{u^s})$ by
%\[ B [v, w] = \int_M \fg^{i\bj} (v_i + v T_{ik}^k) w_{\bj} + \psi_u v w. \]
\begin{equation}
\label{gblq-E40}
\begin{aligned}
B [v, w] := \,& \int_M [\langle \nabla v + v \mbox{tr} \tilde{T},
                                  \nabla w \rangle_{\chi_{u^s}}
                  + (\gamma + \psi^{s}_u) v w] (\chi_{u^{s}})^n \\
          = \,& \int_M [{\fg}^{i\bj} (v_i + v \tilde{T}_{ik}^k)w_{\bj}
               + (\gamma + \psi^{s}_u) v w] (\chi_{u^{s}})^n
\end{aligned}
\end{equation}
 where $\tilde{T}$ denotes the torsion of $\chi_{u^s}$ and
$\mbox{tr} \tilde{T}$ its trace. In local coordinates,
\[ \mbox{tr} \tilde{T} = \tilde{T}_{ik}^k d z_i
      = {\fg}^{k\bj} (\chi_{i\bj k} - \chi_{k\bj i}) dz_i \]
so it only depends on the second derivatives of $u$.

It is clear that for $\gamma > 0$ sufficiently large $B$ satisfies the
Lax-Milgram hypotheses, i.e,
\begin{equation}
\label{gblq-E50}
|B [v, w]| \leq C \|v\|_{H^1} \|w\|_{H^1}
\end{equation}
by the Schwarz inequality, and
\begin{equation}
\label{gblq-E60}
 B [v, v] \geq c_0 \|v\|_{H^1}^2,
   \;\; \forall \; v \in H^1 (M, \chi_{u^s})
\end{equation}
where $c_0$ is a positive constant independent of $s \in [0,1]$ since
$\psi_u > 0$, $|u^s|_{C^2 (M)} \leq C$ and $M$ is compact.
By the Lax-Milgram theorem, for any $\phi \in L^2 (M, \chi_{u^s})$
there is a unique $v \in H^1 (M, \chi_{u^s})$ satisfying
\begin{equation}
\label{gblq-E70}
B  [v, w] = \int_M  \phi w (\chi_{u^{s}})^n \;\;
    \forall \; w \in H^1 (M, \chi_{u^s}).
\end{equation}
On the other hand,
\begin{equation}
\label{gblq-E80}
%\int_M v (Delta^s w+ \psi^s_u w) (\chi_{u^{s}})^n =
    B [v, w] = \int_M (- \Delta^s v + \psi^s_u v + \gamma v) w
               (\chi_{u^{s}})^n
\end{equation}
by integration by parts. Thus $v$ is a weak solution to the equation
\begin{equation}
\label{gblq-E90}
 L_{\gamma} v := \Delta^s v - \psi^s_u v - \gamma v = \phi.
\end{equation}
We write $v = L_{\gamma}^{-1} \phi$.

By the Sobolev embedding theorem the linear operator
\[ K := \gamma L_{\gamma}^{-1}: L^2 (M, \chi_{u^s}) \rightarrow L^2 (M, \chi_{u^s}) \]
is compact. Note also that $v \in H^1 (M, \chi_{u^s})$ is a weak solution of
equation~\eqref{gblq-E30} if and only if
\begin{equation}
\label{gblq-E100}
 v - K v = \zeta
\end{equation}
where $\zeta = L_{\gamma}^{-1} \phi$. Indeed, \eqref{gblq-E30} is
equivalent to
\begin{equation}
\label{gblq-E105}
v = L_{\gamma}^{-1} (\gamma v + \phi) = \gamma L_{\gamma}^{-1} v
   + L_{\gamma}^{-1} \phi.
\end{equation}
Since the solution of equation~\eqref{gblq-E30}, if exists, is unique,
by the Fredholm alternative equation~\eqref{gblq-E100} is uniquely solvable
for any $\zeta \in L^2 (M, \chi_{u^s})$.
Consequently, for any $\phi \in L^2 (M, \chi_{u^s})$ there exists a unique
solution $v \in H^1 (M, \chi_{u^s})$ to equation~\eqref{gblq-E30}.
By the regularity theory of linear elliptic equations,  $v \in C^{2, \alpha} (M, \chi_{u^s})$
if  $\phi \in C^{\alpha} (M, \chi_{u^s})$.
This completes the proof.
\end{proof}

\subsection{The Dirichlet problem}
We now turn to the proof of Theorem~\ref{gblq-th20}. Let
\[ \cA_{\ul{u}} = \{v \in \cH_{\chi}: \mbox{$v \geq \ul{u}$ in $M$,
                    $v = \ul{u}$ on $\partial M$}\}. \]
By the maximum principle, $v \leq h$ on $\bM$ for all $v \in \cA_{\ul{u}}$
where $h$ satisfies $\Delta h + \tr \chi = 0$ in $M$ and $h = \ul{u}$ on
$\partial M$. Therefore we have $C^0$ bounds for solutions
of the Dirichlet problem~\eqref{gblq-I10}-\eqref{gblq-I20} in $\cA_{\ul{u}}$.
The proof of existence of such solutions then follows that of Theorem~1.1 in
\cite{Guan98a}; so is omitted here.

\begin{proof}[Proof of Theorem~\ref{gblq-th50}]
As we only assume $\psi \geq 0$, equation~\eqref{gblq-I10} is
degenerate. So we need to approximate it by nondegenerate equations.
%Since $\chi_{\phi} > 0$ and $M$ is compact, there is $\varepsilon_0 > 0$
%such that $\chi_{\phi} \geq \varepsilon_0 \omega$, and therefore
%$(\chi_{\phi})^n \geq \varepsilon_0^n \omega^{n}$.

For $\varepsilon > 0$, let $\psi^{\varepsilon}$ be a
smooth function such that
\[ \sup \Big\{\psi - \varepsilon, \frac{\varepsilon^n}{2}\Big\}
\leq \psi^{\varepsilon} \leq \sup \{\psi, \varepsilon^n \} \]
and consider the approximation problem
 \begin{equation}
\label{cma-K700'}
 \left\{ \begin{aligned}
 & (\chi_u)^n = \psi^\varepsilon \omega^{n} \;\; \mbox{in $\bM$}, \\
 & u = \varphi \;\;  \mbox{on $\partial M$}.
  \end{aligned} \right.
\end{equation}
Note that $\ul{u}$ is a subsolution of \eqref{cma-K700'} when
$0 < \varepsilon \leq \epsilon$ where $\epsilon > 0$ satisfies
$\chi_{\ul{u}} \geq \epsilon$.
By Theorem~\ref{gblq-th20} there is a unique
solution $u^{\varepsilon} \in C^{2,\alpha} (\bM)$ of
\eqref{cma-K700'} with $u^{\varepsilon} \geq \phi$ on $\bM$
for $\varepsilon \in (0, \epsilon]$.

By the estimates in Section~\ref{gblq-G} we have
 \begin{equation}
\label{cma-K730}
 |u^{\varepsilon}|_{C^1(\bM)} \leq C_1, \; \sup_{M} \Delta u^{\varepsilon} \leq
C_2 (1+ \sup_{\partial M} \Delta u^{\varepsilon}),
\; \mbox{independent of $\varepsilon$}.
\end{equation}
%where $C_1$ and $C_2$ are independent of $\varepsilon$.
On the boundary $\partial M$, the estimates in Section~\ref{gblq-B} for
the pure tangential and mixed tangential-normal second derivatives are
independent of $\varepsilon$, i.e.,
\begin{equation}
\label{cma-K750}
 |u^{\varepsilon}_{\xi \eta}|, \;  |u^{\varepsilon}_{\xi \nu}| \leq C_3,
\; \forall \; \xi, \eta  \in T \partial M, |\xi|, |\eta| = 1, \;\;
\; \mbox{independent of $\varepsilon$}.
%\;\forall \; p \in \partial M
\end{equation}
where $\nu$ is the unit normal to $\partial M$.
% and $C_3$ is independent of $\varepsilon$.
For the estimate of the double normal derivative
$u^{\varepsilon}_{\nu \nu}$, note that
$\partial M = N \times \partial S$ and $T_{C} \partial M = T N$;
this is the only place we need the assumption
$M = N \times S$ so Theorem~\ref{gblq-th50} actually holds for local
product spaces.
Thus,
\begin{equation}
\label{cma-K740}
\chi_{\xi \bar{\xi}} + u^{\varepsilon}_{\xi \bar{\xi}}
   = \chi_{\xi \bar{\xi}} + \ul{u}_{\xi \bar{\xi}} \geq c_0 \;\;
\forall \; \xi \in T_{C} \partial M = T N, \; |\xi| = 1
%\; \forall \; p \in \partial (N \times S)
\end{equation}
where $c_0$ depends only on $\ul{u}$.
Therefore, %From the proof in Section~\ref{gblq-B} we see that
\begin{equation}
\label{cma-K760}
 |u^{\varepsilon}_{\nu \nu}| \leq C,
\;\; \mbox{independent of $\varepsilon$ on $\partial M$}.
\end{equation}

Finally, from $\sup_M |\Delta u^{\varepsilon}| \leq C$ we see that
$|u^{\varepsilon}|_{C^{1, \alpha} (\bM)}$ is bounded for any
$\alpha \in (0, 1)$. Taking a convergent subsequence we obtain a
solution $u \in C^{1, \alpha} (\bM)$
of \eqref{gblq-I10}. % with the desired properties.
By Remark~\ref{gblq-remark-R10}, $u \in C^{1, 1} (\bM)$ when
$M$ has nonnegative bisectional curvature.
\end{proof}

\bigskip

\section{Proof of Theorem~\ref{gblq-TR}}
\label{gblq-T}
\setcounter{equation}{0}
\medskip

%Since $X$ is a totally real submanifold of $M$,
By a theorem of Harvey and Wells~\cite{HW72} (see also \cite{NW69})
there exists a strictly plurisubharmonic function $\rho \in C^3 (\bar{N})$,
where $N \subseteq M$ is a neighborhood of $X$, such that
$\rho^{-1} (\{0\}) = X$, $\rho = 1$ on $\partial N$ and
$\nabla \rho \neq 0$ on $\bar{N} \setminus X$.
Let $\ul{u} = a \rho$. We can fix $a > 2$ sufficiently large so that
$\chi_{\ul{u}} \geq 2 \omega$ in $\bar{N}$.
For $0 < \epsilon \leq 1$ denote
$M_{\epsilon} = \{\ul{u} < \epsilon\}$ and let
$\ul{u}^{\epsilon} \in C^3 (\bar{N})$ be a function such that
$\ul{u}^{\epsilon} = \ul{u}$ in $M_{\epsilon/2}$,
$\ul{u}^{\epsilon} \in C^{\infty} (\bar{M}_{a/2} \setminus M_{\epsilon})$
and $\ul{u}^{\epsilon} \rightarrow \ul{u}$ in $C^3 (\bar{N})$ as
$\epsilon \rightarrow 0$.
We denote $M_{\epsilon, \delta} = \{\ul{u}^{\epsilon} < \delta\}$.
Given $0 < \delta < 1$ %sufficiently small,
we see that for all $\epsilon$ sufficiently small, 
$ M_{\epsilon/2} \subseteq M_{\epsilon, \delta}
  \subseteq  M_{\epsilon, 1} \subseteq M_{a/2}$ 
and
$\nabla \ul{u}^{\epsilon} \neq 0$,
   $\chi_{\ul{u}^{\epsilon}} \geq \omega$
on $\bar{M}_{\epsilon, 1} \setminus M_{\epsilon, \delta}$. 

We now consider the following Dirichlet problem
\begin{equation}
\label{MA-epsilon}
\left\{ \begin{aligned}
&\, (\chi_{u})^n = \delta \omega^n \;\;
   \mbox{in $\bar{M}_{\epsilon, 1} \setminus M_{\epsilon, \delta}$},\\
&\, u = \ul{u}^{\epsilon} \;\;
    \mbox{on $\partial (M_{\epsilon, 1} \setminus M_{\epsilon, \delta})$}.
%u^{\e}=0, & \mbox{on $\p M_{\e}$}.
\end{aligned} \right.
\end{equation}
Note that $\ul{u}^{\epsilon}$ is a subsolution of \eqref{MA-epsilon}
and $\partial (M_{\epsilon, 1} \setminus M_{\epsilon, \delta})$ is smooth.
By Theorem~\ref{gblq-th20} there exists a unique solution
$u^{\epsilon, \delta} \in \cH_{\chi} \cap
 C^{\infty} (\bar{M}_{\epsilon, 1} \setminus M_{\epsilon, \delta})$
to problem \eqref{MA-epsilon}.
It follows from the maximum principle that 
$\ul{u}^{\epsilon} \leq u^{\epsilon, \delta} \leq 1$ in
$\bar{M}_{\epsilon, 1} \setminus M_{\epsilon, \delta}$. 
By (the proof of) Proposition~\ref{gblq-prop-G10}, 
\begin{equation}
\label{gblq-T10}
 \max_{\bar{M}_{\epsilon, 1} \setminus M_{\epsilon, \delta}}
|\nabla u^{\epsilon, \delta}| \leq C \Big(1 + 
\max_{\partial ({M}_{\epsilon, 1} \setminus M_{\epsilon, \delta})}
|\nabla u^{\epsilon, \delta}|\Big) 
\end{equation}
%\mbox{in $\bar{M}_{\epsilon, 1} \setminus M_{\epsilon, \delta}$} \]
where $C$ depends on $|\ul{u}^{\epsilon}|_{C^3}$.
Since $\ul{u}^{\epsilon} \rightarrow \ul{u}$ in
$C^3 (\bar{N})$ as $\epsilon \rightarrow 0$,  we see that $C$ can be chosen 
uniformly in $\epsilon$. 
Consequently, there exists a sequence $\epsilon_k \rightarrow 0$ such that 
$u^{\epsilon_k, \delta}$ converges to a function 
$u^{\delta} \in C^{0,1} (\bar{M}_{1} \setminus M_{\delta})$ 
as $k$ tends to infinity. Moreover, $u^{\delta}$ is an admissible weak 
solution (\cite{BT76}) of the problem
\begin{equation}
\label{MA-delta}
\left\{ \begin{aligned}
&\, (\chi_{u^{\delta}})^n  = \delta \omega^n \;\;
   \mbox{in $\bar{M}_{1} \setminus M_{\delta}$},\\
&\, u^{\delta}  = \ul{u} \;\;
    \mbox{on $\partial (M_{1} \setminus M_{\delta})$}
%u^{\e}=0, & \mbox{on $\p M_{\e}$}.
\end{aligned} \right.
\end{equation}
and
\begin{equation}
\label{gblq-T20}
 \max_{\bar{M}_{1} \setminus M_{\delta}}
|\nabla u^{\delta}| \leq C \Big(1 +
\max_{\partial ({M}_{1} \setminus M_{\delta})}
|\nabla u^{\delta}|\Big). 
\end{equation}
Obviously, $|\nabla u^{\delta}| \leq |\nabla \ul{u}| \leq C$ on 
$\partial M_{1}$ where $C$ is independent of $\delta$. We wish to show that 
\begin{equation}
\label{gblq-T30}
 |\nabla u^{\delta}| \leq C \;\;\mbox{on $\partial M_{\delta}$,  
independent of $\delta$}
\end{equation}
and therefore
\begin{equation}
\label{gblq-T20'}
 \max_{\bar{M}_{1} \setminus M_{\delta}}
|\nabla u^{\delta}| \leq C, \;\; \mbox{independent of $\delta$}.
\end{equation}

Consider an arbitrarily fixed point $p \in X$. Let $\nu \in T_p M$ 
be a unit normal vector to $X$, i.e, $\nu \in N_p X$.  Since $X$ is 
totally real and $\dim X = n$, we see that $J \nu \in T_p X$. 
Through $p$ there exists a complex curve $S = S (p, {\nu}) \subset M$ such 
that $T_p S$ is spanned by $\nu$ and $J \nu$. We may assume 
$S$ to be a geodesic disk about $p$ of radius 
$\gamma > 0$ which is independent of $p$ and $\nu \in N_p X$. 
Moreover,  since $X$ is totally real and $C^3$, we may assume 
(choosing $\gamma$ sufficiently small) that $X \cap B_{\gamma'}$
is a connected curve for any geodesic disk $B_{\gamma'} (p) $ about 
$p$ of radius $\gamma' \leq \gamma$. 
Let $\Gamma = X \cap S$. We see that $\Gamma$
divides $S$ into two components; let $S^+$ denote that one
to which $\nu$ is the interior unit normal
and $B_{\gamma'}^+ (p) = S^+ \cap B_{\gamma'} (p)$. 

For $\delta \geq 0$ sufficiently small, let $h^{\delta}$ be the solution 
of the problem
\begin{equation}
\label{gblq-T35}
\left\{
\begin{aligned}
& \Delta_S h + \tr (\chi|_S) = 0 \;\; \mbox{in $S^+_{\delta}$},\\
& h = \eta^{\delta} \;\; \mbox{on $\partial S^+_{\delta}$}
\end{aligned} \right.
\end{equation}
where $\chi|_S$ is the restriction of $\chi$ on $S$,
$S^+_{\delta} = S^+ \cap M_{\delta}$, 
 and $\eta^{\delta}$ is 
a smooth function on $\partial S^+_{\delta}$ with 
$\eta^{\delta} = \delta$ on 
$\partial S^+_{\delta} \cap B_{\gamma/2} (p)$ and 
$\eta^{\delta} = 1$ on 
$\partial S^+_{\delta} \setminus \partial M_{\delta}$. 
By the elliptic regularity theory $h^{\delta} \in C^{2, \alpha} 
(\ol{S^+_{\delta} \cap B_{\gamma'} (p)})$ for all 
$\gamma' < \gamma$, and 
\begin{equation}
\label{gblq-T40}
 |h^{\delta}|_{C^{2, \alpha} 
   (\ol{S^+_{\delta} \cap B_{\gamma'} (p)})} \leq C \;\;
\mbox{independent of $\delta$}. 
\end{equation}
%in $\ol{S^+_{\delta} \cap B_{\gamma/2} (p)}$}.   

Since $h^{\delta} \geq u^{\delta}$ on 
$\partial (S^+ \cap M_{\delta})$ and
$\Delta_S  u^{\delta} + \tr (\chi|_S) \geq 0$ in $S^+_{\delta}$,
by the maximum principle
$h^{\delta} \geq u^{\delta}$ in $S^+ \cap M_{\delta}$.
Consequently, 
\begin{equation}
\label{gblq-T50}
 \frac{\partial \ul{u}}{\partial \vec{n}}
  \leq \frac{\partial u^{\delta}}{\partial \vec{n}} 
  \leq  \frac{\partial h^{\delta}}{\partial \vec{n}}  \;\;
  \mbox{on $S^+ \cap \partial M_{\delta}$} 
\end{equation}
where $\vec{n}$ is the interior unit normal vector 
field to $S^+ \cap \partial M_{\delta}$ in $S^+_{\delta}$. 

Note that $\{S (p, {\nu}): \nu \in N_p X, |\nu|=1\}$ forms a
foliation of a neighborhood of $p$ which contains a geodesic ball
about $p$ of a fixed radius (independent of $p$) in $M$. 
Let $q \in \partial M_{\delta}$ and $\vec{n}$ be the unit normal vector
to $\partial M_{\delta}$ at $q$.
When $\delta$ is sufficiently small, there exists $p \in X$ and a unique 
$\nu \in N_p X \subset T_p M$, $|\nu| = 1$ such that 
$q \in S (p, {\nu})$ and $\vec{n} \in T_q S$ and therefore is conormal
to $\partial S^+_{\delta} (p, \nu)$ at $q$. Consequently, by \eqref{gblq-T40}
and \eqref{gblq-T50},
\begin{equation}
\label{gblq-T60}
|\nabla u^{\delta} (q)|
     = \Big|\frac{\partial u^{\delta}}{\partial \vec{n}} (q)\Big|
  \leq \max \Big\{\Big|\frac{\partial \ul{u}}{\partial \vec{n}} (q)\Big|, 
         \Big|\frac{\partial h^{\delta}}{\partial \vec{n}} (q)\Big|\Big\}  
  \leq C,
 \;\; \mbox{independent of $q$, $\delta$}. 
\end{equation}
This proves \eqref{gblq-T30} and \eqref{gblq-T20'}.

We observe that if $\delta' < \delta$ then $u^{\delta'} \geq u^{\delta}$
on $\partial (M_1 \setminus M_{\delta})$. By the maximum principle, 
$u^{\delta'} \geq u^{\delta}$
in $\bM_1 \setminus M_{\delta}$ if $\delta' < \delta$.
Therefore $u^{\delta}$ converges to a function $u$ as 
$\delta \rightarrow 0$ pointwise in $\bar{M}_1 \setminus X$. From 
\eqref{gblq-T20'} we see that $u \in C^{0,1} (\bar{M}_1 \setminus X)$ and 
solves \eqref{gblq-I10H} (in the weak sense of Bedford-Taylor~\cite{BT76}). 

Let $p \in X$ and $\nu \in N_p X$, $|\nu| = 1$, and $S = S (p, \nu)$ be as 
before. Let $h^{\delta}$ be the solution of problem \eqref{gblq-T35}
for $\delta \geq 0$. We have 
$\ul{u} \leq u^{\delta} \leq h^{\delta} \leq h^0$ in $S^+_{\delta}$
for all $\delta > 0$. Thus,
$\ul{u} \leq u \leq h^0$ in $S^+$. This shows that $u$ can be extended to 
$u \in C^{0,1} (\bar{M}_1)$ with $u = 0$ on $X$.

The proof of Theorem~\ref{gblq-TR} is complete.

\bigskip

\section{A Dirichlet problem related to Donaldson conjecture}
\label{gblq-S}
\setcounter{equation}{0}
\medskip

Let $(M^n, g)$ be a compact Hermitian manifold without boundary. The space of
Hermitian metrics
\begin{equation}
\label{cma-K220'}
 \cH = \{\phi \in C^{2} (M): \omega_{\phi} > 0\}
\end{equation}
is an open subset of $C^2 (M)$. The tangent space
$T_{\phi} \cH$ of $\cH$ at $\phi \in \cH$ is naturally identified to
$C^2 (M)$. Following \cite{Mabuchi87}, \cite{Semmes92} and
\cite{Donaldson99} one can define a Riemannian metric on $\cH$
using the $L^2$ inner product on $T_{\phi} \cH$
with respect to the volume form of $\omega_{\phi}$:
\begin{equation}
\label{cma-K620}
 \langle\xi, \eta\rangle_{\phi} = \int_M \xi \eta \, (\omega_{\phi})^n,
 \;\; \xi, \eta \in T_{\phi} \cH.
\end{equation}
Accordingly, the length of a regular curve
$\varphi: [0, 1] \rightarrow \cH$ is
 \begin{equation}
\label{cma-K630}
 L (\varphi) =
 \int_0^1 \langle\dot{\varphi},
   \dot{\varphi}\rangle_{\varphi}^{\frac{1}{2}} dt.
 \end{equation}
Henceforth $\dot{\varphi} = \partial \varphi/\partial t$
and $\ddot{\varphi} = \partial^2 \varphi/\partial t^2$.
When $\omega$ is K\"ahler, the geodesic equation takes the form
 \begin{equation}
\label{cma-K640}
 \ddot{\varphi} - |\nabla \dot{\varphi}|_{\varphi}^2 = 0,
 \end{equation}
 or in local coordinates
 \begin{equation}
\label{cma-K640'}
 \ddot{\varphi} - g (\varphi)^{j\bk} \dot{\varphi}_{z_j} \dot{\varphi}_{\bz_k}
 = 0.
 \end{equation}
Here $\{g (\varphi)^{j\bk}\}$ is the inverse matrix of
$\{g (\varphi)_{j\bk}\} = \{g_{j\bk} + \varphi_{j\bk}\}$.
It was observed by Donaldson~\cite{Donaldson99}, Mabuchi~\cite{Mabuchi87}
and Semmes~\cite{Semmes92} that equation~\eqref{cma-K640}
reduces to a homogeneous complex Monge-Amp\`ere equation in $M \times A$
where $A = [0,1] \times \bfS^1$.
Let
\[ w = z_{n+1} = t + \sqrt{-1} s \]
 be a local coordinate of $A$.
If we view a smooth curve in $\cH$ as a function on
$M \times [0, 1]$ and therefore a rotation-invariant function
(constant in $s$) on $M \times A$, then
a geodesic $\varphi$ in $\cH$ satisfies
 \begin{equation}
\label{cma-K670}
 (\tilde{\omega}_{\varphi})^{n+1} \equiv \Big(\tilde{\omega} + \frac{\sqrt{-1}}{2} \partial
\bar{\partial} \varphi\Big)^{n+1} = 0 \;\;\; \mbox{in $M \times A$}
\end{equation}
where
 \begin{equation}
\label{cma-K680}
 \tilde{\omega} = \omega + \frac{\sqrt{-1}}{2} \partial \bar{\partial} |w|^2
  = \frac{\sqrt{-1}}{2} \Big(\sum_{j, k \leq n}  g_{j\bk} dz_j \wedge
  d\bz_k + dw \wedge d\bw\Big)
\end{equation}
is the lift of $\omega$ to $M \times A$.
Conversely, if $\varphi \in C^2 (M \times A)$ is a
rotation-invariant solution of \eqref{cma-K670} such that
\begin{equation}
\label{cma-K690}
 \varphi (\cdot, w) \in \cH, \;\; \forall \; w \in A,
 \end{equation}
then $\varphi$ is a geodesic in $\cH$.

In the K\"ahler case, Donaldson~\cite{Donaldson99} conjectured that
$\cH^{\infty} \equiv \cH \cap C^{\infty} (M)$ is geodesically convex,
i.e., any two functions in $\cH^{\infty}$ can be connected by a smooth
geodesic.
%and that $\cH^{\infty}$ is a metric space.
More precisely,

\begin{conjecture}[Donaldson~\cite{Donaldson99}]
\label{Donaldson-C99o}
Let $M$ be a compact K\"ahler manifold without boundary and
$\rho \in C^{\infty} (M \times \partial A)$ such that
$\rho (\cdot, w) \in \cH$ for $w \in \partial A$. Then there exists
a unique solution $\varphi$  of the Monge-Amp\`ere equation
\eqref{cma-K670} satisfying \eqref{cma-K690} and the boundary
condition $\varphi = \rho$.
\end{conjecture}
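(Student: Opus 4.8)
\emph{Proof proposal.} In its stated form Conjecture~\ref{Donaldson-C99o} asks for a $C^\infty$ solution, which appears to lie beyond the methods developed above, since \eqref{cma-K670} is a degenerate complex Monge--Amp\`ere equation whose solutions are in general no smoother than $C^{1,1}$ (\cite{BF79}, \cite{GS80}); the plan is to establish the weak version --- a unique weak admissible solution $\varphi \in C^{1,\alpha}(M \times A)$ for every $\alpha \in (0,1)$ with $\Delta\varphi \in L^\infty$ --- by exhibiting it as an instance of Theorem~\ref{gblq-th50}. One sets $\tilde M = M \times A$, where $A = [0,1] \times \bfS^1$ is the closed annulus, a compact Riemann surface with smooth boundary $\partial A = (\{0\}\times\bfS^1)\cup(\{1\}\times\bfS^1)$, and works on $\tilde M$ with $\chi = \tilde\omega$ as in \eqref{cma-K680}, $\psi \equiv 0$, and boundary value $\rho$ on $\partial\tilde M = M \times \partial A$. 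Note that $\partial\tilde M$ is not pseudoconvex --- one of the two circles of $\partial A$ is pseudoconcave --- so it is the subsolution technique underlying Theorem~\ref{gblq-th50}, rather than any classical pseudoconvex-boundary result, that is needed. By the discussion surrounding \eqref{cma-K670}--\eqref{cma-K690}, once an admissible solution of $(\tilde\omega_\varphi)^{n+1} = 0$ on $\tilde M$ equal to $\rho$ on $\partial\tilde M$, with $\varphi(\cdot,w)\in\cH$ for each $w$, is produced, it remains only to recover rotation invariance to read off a geodesic in $\cH$.

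The sole nontrivial hypothesis of Theorem~\ref{gblq-th50} to be verified is the existence of a subsolution $\ul{\varphi} \in \cH_{\tilde\omega}$ with $\ul{\varphi} = \rho$ on $\partial\tilde M$, and I would produce it by hand. In the product coordinates $w = t + \sqrt{-1}\,s$ on $A$, extend $\rho$ to $\tilde M$ by the convex interpolation
\[ \hat\rho(z,t,s) = (1-t)\,\rho(z,0,s) + t\,\rho(z,1,s), \]
which is smooth, equals $\rho$ on $\partial\tilde M$, and along each slice satisfies $\tilde\omega_{\hat\rho}|_{M\times\{w\}} = (1-t)\,\omega_{\rho(\cdot,0,s)} + t\,\omega_{\rho(\cdot,1,s)} \geq \epsilon_0\,\omega$ on $\tilde M$ for some $\epsilon_0 > 0$, since $\rho(\cdot,w)\in\cH$ for $w\in\partial A$ and $\tilde M$ is compact. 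Next pick $d \in C^\infty(A)$ with $d_{w\bw} > 0$ on $A$ and $d = 0$ on $\partial A$ --- for instance the solution of $d_{w\bw}\equiv 1$ with zero boundary data, which is negative in the interior by the maximum principle --- and set $\ul{\varphi} = \hat\rho + C\,d$. As $d$ is a function of $w$ alone, passing from $\hat\rho$ to $\ul{\varphi}$ leaves the $M$-block of $\tilde\omega_{\ul\varphi}$ unchanged, still $\geq \epsilon_0\,\omega$, while the $w$-direction becomes strictly positive with a lower bound that grows linearly in $C$; the mixed $z$--$w$ entries stay bounded by a constant depending only on $\|\hat\rho\|_{C^2}$, independently of $C$. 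A Schur-complement estimate then gives $\tilde\omega_{\ul\varphi} \geq \delta_0\,\tilde\omega > 0$ on $\tilde M$ for $C$ large, hence $(\tilde\omega_{\ul\varphi})^{n+1} \geq 0 = \psi\,\tilde\omega^{n+1}$; and $\ul{\varphi} = \hat\rho = \rho$ on $\partial\tilde M$ because $d$ vanishes there. So $\ul{\varphi}$ is an admissible subsolution.

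Theorem~\ref{gblq-th50} (applied with $\psi^{1/(n+1)} = 0 \in C^2$) then produces a weak admissible solution $\varphi \in C^{1,\alpha}(\tilde M)$ for all $\alpha \in (0,1)$, with $\Delta\varphi \in L^\infty$, solving \eqref{cma-K670} and equal to $\rho$ on $\partial\tilde M$; uniqueness in the admissible class follows from the comparison principle for the degenerate Monge--Amp\`ere operator (\cite{BT76}). When $\rho$ is rotation-invariant one may take $\hat\rho$ and $d$, hence $\ul{\varphi}$, rotation-invariant, and then for every rotation $R_\theta$ of the $\bfS^1$-factor $R_\theta^*\varphi$ is again an admissible solution with the same boundary data, so by uniqueness $R_\theta^*\varphi = \varphi$; therefore $\varphi$ descends to $M \times [0,1]$ and is the sought weak geodesic, establishing Conjecture~\ref{Donaldson-C99o} in this weak sense. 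The main obstacle --- the point at which the argument falls short of the conjecture as literally stated --- is interior regularity: for the degenerate equation \eqref{cma-K670} the subsolution method delivers only $\Delta\varphi \in L^\infty$, and upgrading this to $C^{1,1}$, let alone $C^\infty$, would require genuinely new \emph{a priori} estimates or extra hypotheses on $(M,\omega)$ and $\rho$; compare Remark~\ref{gblq-remark-R10}, which handles the favorable case of nonnegative bisectional curvature, where $C^{1,1}$ does follow.
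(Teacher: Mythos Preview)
The statement is labeled a \emph{conjecture} in the paper and is not proved there; it remains open precisely because of the $C^\infty$ regularity requirement, as you correctly diagnose. What the paper does prove is Theorem~\ref{cma-K-thm30}, the weak $C^{1,\alpha}$ version in the special case of rotation-invariant boundary data $\varphi_0,\varphi_1\in\cH$, and your proposal is essentially a proof of that theorem in the slightly more general setting of arbitrary (possibly $s$-dependent) boundary data $\rho$.

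Your argument and the paper's are the same in outline: construct an explicit strict subsolution and invoke Theorem~\ref{gblq-th50}. The paper, working with rotation-invariant data, writes down $\ul\varphi=(1-t)\varphi_0+t\varphi_1+K(t^2-t)$ directly; your construction $\ul\varphi=\hat\rho+C\,d$ with $d_{w\bw}>0$ and $d|_{\partial A}=0$ is the same idea (indeed $t^2-t$ is such a $d$), packaged so as to accommodate $s$-dependence in $\rho$ and to make the Schur-complement step explicit. Your treatment of uniqueness and of rotation invariance via uniqueness is also what the paper uses (the paper attributes uniqueness to Donaldson's maximum-principle argument and records the rotation-invariance conclusion as a remark). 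So your proposal is correct as a proof of the weak form and matches the paper's route, with the modest extension to non-invariant boundary data; as you note, the full $C^\infty$ conjecture is not reached by these methods.
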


The uniqueness was proved by Donaldson~\cite{Donaldson99} as a
consequence of the maximum principle. In \cite{Chen00}, X.-X. Chen
obtained the existence of a weak solution
with $\Delta \varphi \in L^{\infty} (M \times A)$; see also the recent
work of Blocki~\cite{Blocki} who proved that the solution is in
$C^{1,1} (M \times A)$ when $M$ has nonnegative bisectional curvature.
As a corollary of Theorem~\ref{gblq-th50} these results can be extended
to the Hermitian case.

\begin{theorem}
\label{cma-K-thm30}
Let $M$ be a compact Hermitian manifold without boundary.
and let
$\varphi_0, \varphi_1 \in \cH \cap C^4 (M)$.
 There exists a unique (weak) solution $\varphi \in C^{1,\alpha} (M \times
 A)$, $\forall \; 0 < \alpha < 1$, with $\tilde{\omega}_{\varphi} \geq 0$
 and $\Delta \varphi \in L^{\infty} (M \times A)$
 of the Dirichlet problem
  \begin{equation}
\label{cma-K700}
 \left\{ \begin{aligned}
 & (\tilde{\omega}_{\varphi})^{n+1}  = 0 \;\; \mbox{in $M \times A$} \\
 & \varphi = \varphi_0 \;\; \mbox{on $M \times \Gamma_0$}, \\
 &  \varphi = \varphi_1 \;\; \mbox{on $M \times \Gamma_1$}
  \end{aligned} \right.
\end{equation}
where $\Gamma_0 = \partial A|_{t=0}$, $\Gamma_1 = \partial A|_{t=1}$.
Moreover, $\varphi \in C^{1,1} (M \times A)$ if $M$ has nonnegative
bisectional curvature.
\end{theorem}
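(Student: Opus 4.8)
The plan is to derive Theorem~\ref{cma-K-thm30} as a direct corollary of Theorem~\ref{gblq-th50}, applied with $N = M$, $S = A = [0,1] \times \bfS^1$, and $\chi = \tilde\omega$. Here $A$ is a compact Riemann surface with smooth (disconnected) boundary $\partial A = \Gamma_0 \cup \Gamma_1$, so $M \times A$ has exactly the product structure required by Theorem~\ref{gblq-th50}; the form $\tilde\omega$ of \eqref{cma-K680} is positive, being the product of $\omega$ with the flat metric $\frac{\sqrt{-1}}{2} dw \wedge d\bw$ on $A$; and the boundary function $\rho$ defined by $\rho = \varphi_0$ on $M \times \Gamma_0$ and $\rho = \varphi_1$ on $M \times \Gamma_1$ is smooth on $M \times \partial A$. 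Equation~\eqref{cma-K700} is then \eqref{gblq-I10} on $M \times A$ with $\psi \equiv 0$, so $\psi \geq 0$ and $\psi^{1/(n+1)} \equiv 0 \in C^2$, and every hypothesis of Theorem~\ref{gblq-th50} except the existence of a subsolution is immediate.

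To produce a subsolution in the sense of \eqref{gblq-I10'}, write $w = t + \sqrt{-1}s$ and set
\[ \ul{u}(z, w) = (1 - t)\,\varphi_0(z) + t\,\varphi_1(z) + C(t^2 - t), \]
which is independent of $s$, hence well defined and of class $C^4$ on $M \times A$, and satisfies $\ul{u} = \varphi_0$ on $M \times \Gamma_0$, $\ul{u} = \varphi_1$ on $M \times \Gamma_1$. Splitting indices into $M$-directions and $w$, the Hermitian form $\tilde\omega_{\ul{u}}$ has positive-definite $M$-block $P = (1-t)\,\omega_{\varphi_0} + t\,\omega_{\varphi_1}$ (a convex combination of two metrics each bounded below by a fixed multiple of $\omega$, since $M$ is compact and $\varphi_0, \varphi_1 \in \cH$), off-diagonal part $q$ bounded in terms of $|\nabla(\varphi_1 - \varphi_0)|$, and $(w,\bw)$-entry $1 + C/2$; for $C$ large the Schur complement $1 + C/2 - q^* P^{-1} q$ is positive, so $\tilde\omega_{\ul{u}} > 0$, i.e. $\ul{u} \in \cH_{\tilde\omega}$. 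Since $\psi \equiv 0$ the differential inequality in \eqref{gblq-I10'} is automatic, so $\ul{u}$ is a subsolution with the prescribed boundary values. Theorem~\ref{gblq-th50} now yields a weak admissible solution $\varphi \in C^{1,\alpha}(M \times A)$ for every $\alpha \in (0,1)$, with $\tilde\omega_\varphi \geq 0$ and $\Delta\varphi \in L^\infty(M\times A)$, solving \eqref{cma-K700}. If $M$ has nonnegative bisectional curvature then so does $M \times A$: the Chern curvature of a product is the direct sum of the factors' curvatures, the $A$-factor is flat, and the mixed bisectional curvatures vanish; hence the last assertion of Theorem~\ref{gblq-th50}, via Remark~\ref{gblq-remark-R10}, gives $\varphi \in C^{1,1}(M \times A)$.

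For uniqueness I would argue exactly as Donaldson~\cite{Donaldson99} did in the K\"ahler case. A bounded $\tilde\omega$-plurisubharmonic function $\varphi$ with $(\tilde\omega_\varphi)^{n+1} = 0$ is maximal in the pluripotential sense (Bedford--Taylor~\cite{BT76}, \cite{BT82}; the Monge--Amp\`ere measure is well defined since $\varphi$ is continuous with $\Delta\varphi \in L^\infty$), and two maximal $\tilde\omega$-psh functions with the same continuous boundary data must coincide, by invoking the definition of maximality twice. This reasoning is purely local and uses only that $\tilde\omega$ is a smooth positive $(1,1)$-form, so it transfers verbatim to the Hermitian setting; one may equally invoke the comparison principle for the complex Monge--Amp\`ere operator directly. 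The existence is a clean corollary once the product structure and the subsolution above are in place, so the only steps demanding care are the verification that the hypotheses of Theorem~\ref{gblq-th50} genuinely hold (product structure, positivity of $\tilde\omega$, the subsolution), that nonnegative bisectional curvature is inherited by $M \times A$, and that the pluripotential comparison statement is applied to objects of the correct regularity on $M \times A$ — and I expect this last point to be the most delicate.
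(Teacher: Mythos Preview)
Your proposal is correct and follows essentially the same route as the paper: apply Theorem~\ref{gblq-th50} with $N=M$, $S=A$, $\chi=\tilde\omega$, and use the subsolution $\ul\varphi=(1-t)\varphi_0+t\varphi_1+K(t^2-t)$, which is exactly the paper's choice. You supply more detail than the paper does --- the Schur-complement verification of $\tilde\omega_{\ul u}>0$, the inheritance of nonnegative bisectional curvature by the product, and the pluripotential uniqueness argument --- whereas the paper simply asserts $\tilde\omega_{\ul\varphi}>0$ for $K$ large and refers to Donaldson for uniqueness; but the underlying argument is the same.
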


\begin{proof}
In order to apply Theorem~\ref{gblq-th50} to the Dirichlet
problem~\eqref{cma-K700} we only need to construct a strict subsolution.
This is easily done for the
annulus $A = [0,1] \times \bfS^1$. Let
%$\Gamma_0 = \{0\} \times \bfS^1$,$\Gamma_1 = \{1\} \times \bfS^1$ and
%$\varphi_0 = \rho|_{t = 0}$ and $\varphi_1 = \rho|_{t = 1}$. Define
\[ \ul{\varphi} = (1 - t) \varphi_0 + t \varphi_1 + K (t^2 - t). \]
Since $\varphi_0, \varphi_1 \in \cH (\omega)$ we see that
$\tilde{\omega}_{\ul{\varphi}} > 0$ and
$(\tilde{\omega}_{\ul{\varphi}})^{n+1} \geq 1$
for $K$ sufficiently large.
\end{proof}

\begin{remark}
By the uniqueness $\varphi$ is rotation invariant (i.e., independent of $s$).
\end{remark}

\bigskip

\small

\end{document}